\newcommand{\auth}[0]{{Tobias Fritz and Paolo Perrone}}
\newcommand{\tit}[0]{{A Probability Monad as the Colimit of Spaces of Finite Samples}}
\newcommand{\kw}[0]{{Categorical probability theory, Giry monad, optimal transport, Kantorovich-Rubinstein distance, monoidal Kan extension}}
\numberwithin{equation}{section}
\theoremstyle{plain}
\newtheorem{thm}{Theorem}[subsection]
\newtheorem{lemma}[thm]{Lemma}
\newtheorem{prop}[thm]{Proposition}
\newtheorem{cor}[thm]{Corollary}
\newtheorem{deph}[thm]{Definition}
\theoremstyle{definition}
\newtheorem{remark}[thm]{Remark}
\newcommand{\N}{\mathbb{N}}
\newcommand{\R}{\mathbb{R}}
\newcommand{\cat}[1]{{\mathsf{#1}}} 
\newcommand{\ar}[2][]{\arrow{#2}{#1}}
\newcommand{\uni}[2][]{\arrow[dashrightarrow]{#2}{#1}} 
\newcommand{\nat}[2][]{\arrow[Rightarrow]{#2}{#1}} 
\newcommand{\idar}[2][]{\arrow[equal]{#2}{#1}} 
\DeclareMathOperator{\1}{\mathbbm{1}}
\DeclareMathOperator*{\colim}{colim}
\newcommand{\Lip}{\mathrm{Lip}}
\newcommand{\E}{\mathbb{E}}
\newcommand{\diam}{\mathrm{diam}}
\DeclareMathOperator{\e}{\varepsilon}
\newcommand{\op}{\mathrm{op}}
\let\originalleft\left
\let\originalright\right
\renewcommand{\left}{\mathopen{}\mathclose\bgroup\originalleft}
\renewcommand{\right}{\aftergroup\egroup\originalright}
\title{\tit}
\author[1]{Tobias Fritz\footnote{Correspondence: tfritz [at] perimeterinstitute.ca}}
\author[2]{Paolo Perrone\footnote{Correspondence: perrone [at] mis.mpg.de}}
\affil[1]{\small Perimeter Institute for Theoretical Physics, Waterloo, Canada}
\affil[2]{\small Max Planck Institute for Mathematics in the Sciences, Leipzig, Germany}
\date{}
\begin{document}

\maketitle

\begin{abstract}
\addcontentsline{toc}{section}{Abstract}
We define and study a probability monad on the category of complete metric spaces and short maps. It assigns to each space the space of Radon probability measures on it with finite first moment, equipped with the Kantorovich--Wasserstein distance. This monad is analogous to the Giry monad on the category of Polish spaces, and it extends a construction due to van Breugel for compact and for 1-bounded complete metric spaces.
 
We prove that this \emph{Kantorovich monad} arises from a colimit construction on finite power-like constructions, which formalizes the intuition that probability measures are limits of finite samples. The proof relies on a criterion for when an ordinary left Kan extension of lax monoidal functors is a monoidal Kan extension. 
The colimit characterization allows the development of integration theory and the treatment of measures on spaces of measures, without measure theory.
 
We also show that the category of algebras of the Kantorovich monad is equivalent to the category of closed convex subsets of Banach spaces with short affine maps as morphisms.
\end{abstract}

\newpage
\tableofcontents

\section{Introduction}

In existing categorical approaches to probability theory, one works with a suitable category of measurable spaces and equips it with a monad, which associates to every space $X$ the space of probability measures on $X$~\cite{early}. This applies e.g.~to the Giry monad~\cite{giry} and to the Radon monad, discovered in~\cites{semadeni,swirszcz} and named such in~\cite{furber-jacobs}; see also~\cite{jacobs} for a general overview of probability monads and~\cite{lucyshyn-wright} for a more general setup. These monads constitute an additional piece of structure on the underlying category. Here, we introduce another such monad---the \emph{Kantorovich monad}---which lives on the category of complete metric spaces and extends the analogous monads studied by van Breugel~\cite{breugel} on the full subcategories of compact metric spaces and of 1-bounded complete metric spaces. An extension to suitable non-bounded spaces is necessary for our goal to redevelop basic probability theory categorically, because generic distributions of random variables in probability theory may not have bounded support -- the Gaussian is a prominent example.

We prove that the monad structure of the Kantorovich monad naturally arises from a colimit construction on the underlying category, which is motivated by the operational interpretation of a probability measure as a formal limit of finite samples. This allows to approach some elements of probability measure theory, such as integration, in terms of simpler considerations based on finite sets.
Among other benefits, we hope that this may help to make probability theory more constructive, perhaps in a way that allows for straightforward implementation in a functional programming language or proof assistant.

Besides this colimit characterization, another reason for using probability monads on \emph{metric} spaces is the following. Most results in probability theory are concerned with approximations (in some sense or another), often in a quantitative manner. Therefore we expect that working with metric spaces will allow us to find categorical formulations, proofs, or perhaps even \emph{generalizations} of such approximation results, such as the law of large numbers, or the Glivenko-Cantelli theorem on the convergence of empirical distributions.

In algebra, theoretical computer science, and other fields, monads often arise from equational theories \cite{comp-monads}. Categorically, this is formalized by presenting a monad in terms of an associated Lawvere theory, operad, or generalized operad, via a suitable coend or more general colimit. From this perspective, our colimit characterization formalizes the idea that this probability monad models a kind of algebraic theory presented by the operations of taking convex combinations with uniform weights. However, our way of presenting the Kantorovich monad does not involve a Lawvere theory or an operad, but rather a \emph{graded monad}~\cite{ssm}. 

This theme has also been pursued in the work of van Breugel on the Kantorovich monad for 1-bounded complete metric spaces~\cite{breugel}, in particular with the consideration of metric mean-value algebras~\cite[Definition~6]{BHMW}. A similar idea underlies recent ongoing work of Mardare, Panangaden and Plotkin. In~\cite[Theorem~10.9]{mpp}\footnote{\label{mpp_error} However, in order for their Theorem 10.9 to be correct, their definition of the $p$-Wasserstein space $\Delta[M]$ needs to be restricted to the measures of finite $p$-th moment, as we do in Section~\ref{repth} for $p=1$. The reason is that a probability measure of infinite $p$-th moment has infinite $p$-Wasserstein distance from any finitely supported probability measure, e.g.~because it has infinite distance from any Dirac measure. The error in the proof (as pointed out to us by Prakash Panangaden) is in the claim that the $p$-Wasserstein distance metrizes weak convergence, which is true only for finite $p$-th moment.}, they consider the underlying functor of the Kantorovich monad on complete separable metric spaces as the free algebras of an $\R_+$-enriched Lawvere theory, which is closely related to our Theorem~\ref{Palgthm}\ref{csmet}.

\paragraph{Summary.} 
In Section \ref{secwspaces} we introduce the main mathematical constructions that we use in this work: the categories $\cat{Met}$ and $\cat{CMet}$ of (complete) metric spaces and short maps, and the Radon measures on such spaces with finite first moment. We prove (Theorem \ref{fl-complete}) that such measures are equivalently linear, positive and $\tau$-smooth functionals on the space of Lipschitz functions. In Section~\ref{conwsp} we introduce the Wasserstein metric, and show the functoriality of the Wasserstein space construction (Lemma \ref{Pfunctor}), resulting in the \emph{Kantorovich functor} $P$.

In Section \ref{seccolimit} we prove (Theorem \ref{pcolimit} and Corollary \ref{empdcolimit}) that the Wasserstein spaces and the Kantorovich functor can be obtained as the colimit of the \emph{power functors}, defined in \ref{powerfunsec}, and that the universal arrow is given by the empirical distribution map, which we define in \ref{empdist}. 

In Section \ref{secmonad} we prove that $P$ has a monad structure (Theorem \ref{monadth}), which arises naturally from the colimit characterization, given the particular monoidal structure of the power functors (Theorems \ref{monoidalfunctorunsym} and \ref{monoidalfunctorsym}). This can be interpreted as a Kan extension in the 2-category $\cat{MonCat}$ of monoidal categories and lax monoidal functors (Theorem \ref{FinUniftoNmonad}). 

In Section \ref{Palgssec} we study the algebras of $P$. We show (Theorem \ref{Palgthm}) that the algebras are convex metric spaces whose convex structure is compatible with the metric. This implies in turn that the algebras are equivalently closed convex subsets of Banach spaces (Theorem \ref{algban}).

In Appendix \ref{colims_met_app} we study colimits in the category of metric spaces, and prove (Proposition~\ref{tensor_cocont}) that the tensor product preserves colimits. This is used in Section \ref{secmonad} to define the monad structure of $P$.

In Appendix \ref{kan} we give the 2-categorical details (Theorem \ref{month}) of why the Kan extensions used in this paper are Kan extensions in $\cat{MonCat}$, or \emph{algebraic Kan extensions}.

In Appendix \ref{ccms} we define the operad of convex spaces, and show that $\cat{Met}$ is a pseudoalgebra for this operad, giving further motivation for the power functor construction. We also show that, in agreement with the \emph{microcosm principle}, $P$-algebras in the form of convex spaces with metric compatibility are particular internal algebras in $\cat{Met}$, i.e.~they form a full subcategory.

\paragraph{Acknowledgments.} We thank Rory Lucyshyn-Wright, Rostislav Matveev, Prakash Panangaden, Sharwin Rezagholi, Patrick Schultz and David Spivak for helpful discussions. We also thank Prakash Panangaden and David Spivak for helpful comments on a draft of this work, and Roald Koudenburg and Mark Weber for pointers to the literature.
Moreover, we thank the anonymous reviewer for very helpful comments.

\section{Wasserstein spaces}
\label{secwspaces}

\subsection{Categorical setting}
\label{catset}

Two categories are of primary interest to us. The first one is the monoidal category $\cat{Met}$, where:
\begin{itemize}
 \item Objects are metric spaces, with the classical notion of metric as a distance function $d : X\times X \to [0,\infty)$ satisfying identity of indiscernibles, symmetry, and the triangle inequality;
 \item Morphisms are short maps (also called 1-Lipschitz maps), i.e.~functions $f:X\to Y$ such that for all $x,x'\in X$:
 \begin{equation}
  d_Y(f(x),f(x')) \le d_X(x,x')\;; 
 \end{equation}
 \item As monoidal structure, we define $X\otimes Y$ to be the set $X\times Y$, equipped with the $\ell^1$-product metric:
 \begin{equation}
  d_{X\otimes Y} \big( (x,y), (x',y') \big) := d_X(x,x') + d_Y(y,y').
 \end{equation}
\end{itemize}
The second one is its full subcategory $\cat{CMet}$ whose objects are complete metric spaces.

It is useful to think of metric spaces and short maps as enriched categories and enriched functors~\cites{lawvere,seriously}, although in that context one typically works with a more relaxed notion of metric space, such as allowing infinite distances (or even Lawvere metric spaces), in order to guarantee cocompleteness. For $\cat{Met}$ and $\cat{CMet}$, we investigate the existence of particular colimits and their preservation by the monoidal product in Appendix~\ref{colims_met_app}.

In our considerations, the concept of \emph{isometric embedding} will often come up. It is worth noting that together with the bijective short maps, the isometric embeddings in $\cat{Met}$ form an orthogonal factorization system, which is, in many respects, analogous to the (bo,ff) factorization system on $\cat{Cat}$. Isometric embeddings are very close to being characterized 1-categorically as the extremal monomorphisms: every extremal monomorphism in $\cat{Met}$ is an isometric embedding, but only the isometric embeddings of \emph{closed} subspaces are extremal monomorphisms, since the isometric embedding of a subspace into its closure is an epimorphism. Since a subspace of a complete metric space is complete if and only if it is closed, it follows that in $\cat{CMet}$ the class of extremal monomorphisms coincides with the isometric embeddings.

\subsection{Analytic setting}
\label{anaset}

The following definitions of an analytic nature will only be needed in this section and in Section~\ref{seccolimit}, where we prove our colimit characterization; all subsequent developments will use the latter and therefore do not require any measure theory. 

Every metric space is a topological space, and so also a measurable space with the its $\sigma$-algebra. We will always suppose probability measures to be Borel, and Radon, i.e.~inner regular with respect to compacts.

For $X\in\cat{Met}$, we write $\Lip(X)$ for the space of Lipschitz functions $X\to\R$, where $\R$ carries its usual Euclidean metric. Every Lipschitz function is a scalar multiple of an element of $\cat{Met}(X,\R)$, i.e.~of a short map $X\to\R$. We expect that working with the latter space, or even just with $\cat{Met}(X,\R_+)$, would be useful for achieving further abstraction. However, currently we prefer to work with $\Lip(X)$, which has the added benefit of being a vector space.

\subsection{Finite first moments and a representation theorem}
\label{repth}

In order to define Wasserstein spaces, we first have to define probability measures of finite first moment, which are those for which every Lipschitz function has an expectation value.

\begin{deph}\label{ffm}
 Let $X\in\cat{Met}$ and $p$ be a probability measure on $X$. We say that $p$ has \emph{finite first moment} if the expected distance between two random points is finite, i.e.~if
\[
	\int d(x,y) \, dp(x) \, dp(y) < +\infty.
\]
\end{deph}

We have borrowed this elegant formulation from Goubault-Larrecq~\cite[Section~1]{gl}, who attributes it to Fernique.

\begin{lemma}
\label{ffmchar}
The following statements are equivalent for a probability measure $p$ on $X\in\cat{CMet}$:
\begin{enumerate}
\item\label{pffm} $p$ has finite first moment.
\item\label{existsx0} There is $y\in X$ such that the expected distance from $y$ is finite,
\begin{equation*}
\int d(y,x)\,dp(x) < +\infty.
\end{equation*}
\item\label{allx0} For all $z\in X$, the expected distance from $z$ is finite,
\begin{equation*}
\int d(z,x)\,dp(x) < +\infty.
\end{equation*}
\item\label{fexpect} Every $f\in \Lip(X)$ has finite expectation value,
 \begin{equation*}
 \label{fint}
  \int f(x)\,dp(x) < +\infty.
 \end{equation*}
\end{enumerate}
\end{lemma}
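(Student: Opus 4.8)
The plan is to prove the cycle of implications \ref{pffm} $\Rightarrow$ \ref{existsx0} $\Rightarrow$ \ref{allx0} $\Rightarrow$ \ref{fexpect} $\Rightarrow$ \ref{pffm}, using the triangle inequality to convert between the various integrands and Tonelli's theorem to handle the double integral.

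First, for \ref{pffm} $\Rightarrow$ \ref{existsx0}: since $\int\!\!\int d(x,y)\,dp(x)\,dp(y) < \infty$, Tonelli's theorem (the integrand is nonnegative and measurable) tells us that the inner integral $x \mapsto \int d(x,y)\,dp(y)$ is finite for $p$-almost every $x$; in particular it is finite for at least one $y \in X$ (here we use that $p$ is a probability measure, so the set of such points is nonempty — indeed has full measure), which is exactly statement \ref{existsx0}.

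Next, \ref{existsx0} $\Rightarrow$ \ref{allx0} is the triangle inequality: for any $z \in X$ we have $d(z,x) \le d(z,y) + d(y,x)$, so $\int d(z,x)\,dp(x) \le d(z,y) + \int d(y,x)\,dp(x) < \infty$. Similarly \ref{allx0} $\Rightarrow$ \ref{fexpect} follows from the fact that a Lipschitz function $f$ with Lipschitz constant $L$ and any base point $z$ satisfies $|f(x)| \le |f(z)| + L\,d(z,x)$, so $\int |f(x)|\,dp(x) \le |f(z)| + L \int d(z,x)\,dp(x) < \infty$; note this also shows the expectation is well-defined (absolutely convergent), not merely finite as written. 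Finally, \ref{fexpect} $\Rightarrow$ \ref{pffm}: it suffices to produce one base point $y$ with $\int d(y,x)\,dp(x) < \infty$, because then Tonelli gives $\int\!\!\int d(x,y')\,dp(x)\,dp(y') \le \int (d(y',y) + \int d(y,x)\,dp(x))\,dp(y')$, and $\int d(y',y)\,dp(y')$ is finite since $x \mapsto d(y,x)$ is a Lipschitz function (with constant $1$), to which we can apply \ref{fexpect}; so fix any $y \in X$, observe $d(y,\cdot) \in \Lip(X)$, and apply \ref{fexpect} to it to get the required finiteness.

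I do not expect any serious obstacle here; the only subtlety is making sure measurability and $\sigma$-finiteness hypotheses for Tonelli are in place, which is automatic since $p$ is a Borel probability measure and $d$ is continuous hence Borel measurable on $X \times X$. Completeness of $X$ is not actually used in this lemma — it is stated for $X \in \cat{CMet}$ only for consistency with the rest of the section. One presentational point worth flagging: in statements \ref{fexpect}, \ref{existsx0}, \ref{allx0} the integrals should really be understood as "absolutely convergent" (for \ref{fexpect}) or "finite" for nonnegative integrands (for the distance integrals); the argument above handles this uniformly by always bounding $|f|$ or $d$ from above.
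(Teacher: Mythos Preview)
Your proof is correct and follows essentially the same cycle of implications as the paper, with the same key ingredients (triangle inequality and the Lipschitz bound). The only minor variation is in \ref{fexpect}$\Rightarrow$\ref{pffm}: the paper observes that $x\mapsto\int d(x,y)\,dp(y)$ is itself a short map (hence in $\Lip(X)$) and applies \ref{fexpect} to it directly, whereas you route through a fixed base point and the triangle inequality; both arguments are equally short and amount to the same idea.
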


\begin{proof}
Since $p$ is a probability measure, we know that $X$ is nonempty and thus we can always choose a point whenever we need one.

\begin{itemize}
 \item \ref{pffm}$\Rightarrow$\ref{existsx0}: if the integral of a nonnegative function is finite, then the integrand is finite at (at least) one point.
 \item \ref{existsx0}$\Rightarrow$\ref{allx0}: For all $z\in X$, and for $y$ as in~\ref{existsx0}, we have:
 \begin{align*}
  \int d(z,x)\,dp(x) &\le \int \big( d(z,y) + d(y,x) \big) \,dp(x) \\
   &= d(z,y) + \int d(y,x) \,dp(x), 
 \end{align*}
where the first term is finite for every $z$, and the second term is finite by hypothesis.
 \item \ref{allx0}$\Rightarrow$\ref{fexpect}: Since $f$ is integrable if and only if $|f|$ is, it is enough to consider the case $f\geq 0$. Then for an arbitrary $z\in X$,
 \begin{align*}
 \int f(x)\,dp(x) &= \int \left( f(x) - f(z) + f(z)\right) \,dp(x) \\
 &\le f(z) + \int |f(x) - f(z)| \, dp(x) \\
 &\le f(z) + L_f \int  d(x,z) \, dp(x) < +\infty,
 \end{align*}
 where $L_f$ is the Lipschitz constant of $f$, which is a finite number.
 \item \ref{fexpect}$\Rightarrow$\ref{pffm}: Since the distance is short in each argument, the function
 \begin{equation*}
  x \mapsto \int_X d(x,y)\,dp(y)
 \end{equation*}
 is finite by assumption and automatically short. Therefore its expectation is again finite by hypothesis, which implies the finite first moment condition.\qedhere
\end{itemize}
\end{proof}

From now on, we write $PX$ for the set of probability measures on $X$ with finite first moment. Later, we will equip this set with a metric. As we discuss in more detail below, pushing forward measures along a short map $f:X\to Y$ defines a function $Pf:PX\to PY$ which makes $P$ into a functor.

Often measures are specified by how they act on functions by integration, such as in the definition of the Daniell integral or in the Riesz representation theorem. We will now state an analogous result for $PX$. Concretely, every $p\in PX$ defines a linear functional $\E_p:\Lip(X)\to\R$ given by mapping every function to its expectation value,
\begin{equation}
\label{fEp}
f\longmapsto \E_p(f) := \int f(x)\, dp(x).
\end{equation}
We can thus consider $\E$ as a map $\E:PX\to\Lip(X)^*$ into the algebraic dual. Each functional $\E_p$ has a number of characteristic properties: it is linear, positive, and satisfies a certain continuity property. To define the latter, we consider $\Lip(X)$ as a partially ordered vector space with respect to the pointwise ordering. A \emph{monotone net} of functions is a family $(f_\alpha)_{\alpha\in I}$ in $\Lip(X)$ indexed by a directed set $I$, such that $f_\alpha\le f_\beta$ if $\alpha\le\beta$. If the supremum $\sup_\alpha f_\alpha$ exists in $\Lip(X)$, we say that this supremum is \emph{pointwise} if $(\sup_\alpha f_\alpha)(x) = \sup_\alpha f_\alpha(x)$ for every $x\in X$. For example on $X = [0,1]$, the sequence of functions
\begin{equation}
\label{non-pointwise_sup}
	f_n(x) := \min(nx,1)
\end{equation}
with Lipschitz constant $n\in\N$ is a monotone sequence in $\Lip([0,1])$ whose supremum is the constant function $1$, but this supremum is not pointwise, since $(\sup_n f_n)(0) = 1$, while $\sup_n f_n(0) = 0$.

The following representation theorem is similar to~\cite[Theorem~2.4.12]{edgar} and essentially a special case of~\cite[Theorem~436H]{fremlin}.

\begin{thm}\label{fl-complete}
 Let $X\in\cat{Met}$. Mapping every probability measure to its expectation value functional, $p\mapsto \E_p$, establishes a bijective correspondence between probability measures on $X$ with finite first moment, and linear functionals $\phi:\Lip(X)\to\R$ with the following properties:
 \begin{itemize}
  \item Positivity: $f\ge 0$ implies $\phi(f)\ge 0$;
  \item $\tau$-smoothness: if $(f_\alpha)_{\alpha\in I}$ is a monotone net in $\Lip(X)$ with pointwise supremum $\sup_\alpha f_\alpha\in\Lip(X)$, then
\begin{equation}
\label{tau_smooth}
	\phi\left(\sup_\alpha f_\alpha\right) = \sup_\alpha \phi(f_\alpha).
\end{equation}
  \item Normalization: $\phi(1)=1$.
 \end{itemize}
\end{thm}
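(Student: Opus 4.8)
The plan is to establish the two directions separately. The easy direction is that each $\E_p$ has the three stated properties: linearity and positivity of $\E_p$ are immediate from the elementary properties of the integral, and normalization $\E_p(1) = \int 1\,dp = 1$ holds since $p$ is a probability measure. The one nontrivial point here is $\tau$-smoothness: given a monotone net $(f_\alpha)$ with pointwise supremum $f = \sup_\alpha f_\alpha \in \Lip(X)$, I want $\int f\,dp = \sup_\alpha \int f_\alpha\,dp$. Fixing any index $\alpha_0$ and replacing $f_\alpha$ by $f_\alpha - f_{\alpha_0}$ (legitimate since all the integrals involved are finite by Lemma~\ref{ffmchar}), I reduce to the case of a monotone net of nonnegative functions increasing pointwise to a nonnegative limit; here the statement is a form of the monotone convergence theorem for nets. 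Since $X$ is a metric space, hence hereditarily Lindel\"of, every monotone net of continuous functions with pointwise-continuous supremum admits a countable cofinal-in-effect subfamily realizing the supremum pointwise (one can extract, for each point of a countable dense set, a sequence from the net, and then use continuity), which lets me invoke the ordinary monotone convergence theorem. This is essentially the argument behind the cited \cite[Theorem~436H]{fremlin}.

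The substantive direction is surjectivity: given a linear, positive, $\tau$-smooth, normalized functional $\phi$ on $\Lip(X)$, I must produce a Radon probability measure $p$ of finite first moment with $\E_p = \phi$. Since positive linear functionals on a space of Lipschitz functions are automatically order-continuous enough to be Daniell-type integrals, the strategy is to realize $\phi$ as a Daniell integral and invoke the Daniell--Stone / Riesz representation machinery. Concretely, I would first note that $\phi$ restricted to the lattice cone of bounded Lipschitz functions is a positive linear functional, and $\tau$-smoothness gives exactly the continuity along monotone nets (hence along monotone sequences, and in particular $\sigma$-continuity from below) needed to apply the Daniell--Stone theorem, yielding a Borel measure $p$ representing $\phi$ on bounded Lipschitz functions. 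Normalization forces $p(X) = 1$. Radonness (inner regularity with respect to compacts) on a metric space follows from $\tau$-smoothness once one knows $p$ is tight; tightness is where I expect the work to concentrate — one shows that if $p$ failed to be concentrated on a countable union of compacts one could build a monotone net of Lipschitz functions violating \eqref{tau_smooth}. Uniqueness of $p$ is clear since bounded Lipschitz functions separate points of the space of Radon measures (two Radon measures on a metric space agreeing on all bounded Lipschitz functions are equal), which also shows the correspondence $p \mapsto \E_p$ is injective.

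Finally I must check that the measure $p$ produced actually has finite first moment and that $\phi$ agrees with $\E_p$ on all of $\Lip(X)$, not merely on bounded Lipschitz functions. For finite first moment: for a fixed basepoint $y \in X$, the truncations $g_n(x) := \min(d(y,x), n)$ form a monotone sequence of bounded Lipschitz functions with pointwise supremum $d(y,\cdot) \in \Lip(X)$, so $\tau$-smoothness gives $\int d(y,x)\,dp(x) = \sup_n \phi(g_n) = \phi(d(y,\cdot)) < \infty$, and Lemma~\ref{ffmchar} then yields finite first moment. For agreement on unbounded $f \in \Lip(X)$: writing $f = f^+ - f^-$ and truncating each part, the same $\tau$-smoothness argument (applied to $\phi$ on one side and to the monotone convergence theorem for $p$ on the other) gives $\phi(f) = \int f\,dp = \E_p(f)$. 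The main obstacle, as noted, is the tightness/Radonness step; everything else is a fairly standard assembly of the Daniell integral with the monotone-net hypothesis doing the job that $\sigma$-additivity and tightness would do in a classical treatment.
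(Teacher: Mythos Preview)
Your overall architecture matches the paper's: the paper simply cites \cite[Theorem~436H]{fremlin} for surjectivity (which is the Daniell--Stone/Riesz machinery you sketch) and then proves injectivity by approximating the indicator of an open set $U$ by the Lipschitz functions $f_n(x) := \min(1, n\cdot d(x,X\setminus U))$ and using monotone convergence, so that agreement on Lipschitz functions forces agreement on open sets, hence on all Borel sets by \cite[416E]{fremlin}. Your finite-first-moment and bounded-to-unbounded extension arguments are fine and are details the paper leaves implicit.

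There is, however, a genuine gap in your forward direction. You write ``since $X$ is a metric space, hence hereditarily Lindel\"of''; this is false in general---a metric space is hereditarily Lindel\"of if and only if it is separable, and the theorem is stated for arbitrary $X\in\cat{Met}$. So your reduction of the net-indexed monotone convergence to the sequential one does not go through. The correct reason $\E_p$ is $\tau$-smooth is that $p$ is assumed \emph{Radon}: inner regularity with respect to compacts makes every Radon measure $\tau$-additive, and $\tau$-additivity is exactly what yields the monotone convergence theorem for upward-directed nets of lower semicontinuous functions (see e.g.\ Fremlin \S414/\S436). Concretely, given $\varepsilon>0$ pick compact $K$ with $p(X\setminus K)<\varepsilon$; on $K$ the net $(f_\alpha|_K)$ converges uniformly to $f|_K$ by Dini's theorem (here using that the pointwise supremum is continuous), and the contribution from $X\setminus K$ is controlled by $\varepsilon$ times a Lipschitz bound. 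This repairs the argument without any separability assumption.
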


The concept of $\tau$-smoothness is similar to \emph{Scott continuity} in the context of domain theory and to \emph{normality} in the context of von Neumann algebras, but the important difference is that the preservation of suprema only applies to pointwise suprema: the pointwiseness expresses exactly the condition that integration against delta measures must preserve the supremum. For example, integrating \eqref{non-pointwise_sup} against $\delta_0$ does not preserve the supremum.

\begin{proof}
The fact that the map $p\mapsto \E_p$ is surjective onto functionals satisfying the above conditions is an instance of~\cite[Theorem~436H]{fremlin}. It remains to be shown that the representing measure $p$ is unique. If $\E_p = \E_q$, then by~\cite[Proposition~416E]{fremlin}, it is enough to show that $p(U) = q(U)$ for every open $U \subseteq X$. But now the sequence $(f_n)$ of Lipschitz functions
\[
	f_n(x) := \min(1, n\cdot d(x,X\setminus U) )
\]
monotonically converges pointwise to the indicator function of $U$. Together with Lebesgue's monotone convergence theorem, the equality $\E_p = \E_q$ therefore implies $p(U) = q(U)$, as was to be shown.
\end{proof}

We collect another property for future use, which relies crucially on the non-negativity of a measure:

\begin{lemma}
Let $p\in PX$ and $f:X\to Y$ continuous such that the pushforward measure $f_*p$ is supported on some subset $Y'\subseteq Y$. Then $p$ is supported on $f^{-1}(Y')$.
\label{pullsupport}
\end{lemma}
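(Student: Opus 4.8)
The plan is to unwind the meaning of ``supported on'' in terms of open neighborhoods and then simply transport such neighborhoods along $f$. Recall that a non-negative Borel measure $\mu$ on a topological space is supported on a subset $S$ precisely when $\supp(\mu)\subseteq S$, which amounts to saying that every point lying outside $S$ possesses an open neighborhood of $\mu$-measure zero. Hence it suffices to verify that every $x\in X$ with $x\notin f^{-1}(Y')$ has an open neighborhood of $p$-measure zero.

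So let $x\in X$ with $x\notin f^{-1}(Y')$, i.e.~$f(x)\notin Y'$. Since by hypothesis $f_*p$ is supported on $Y'$, there is an open set $V\subseteq Y$ with $f(x)\in V$ and $(f_*p)(V)=0$. By continuity of $f$, the set $U:=f^{-1}(V)$ is open in $X$ and contains $x$. By the definition of the pushforward,
\[
  p(U)=p\big(f^{-1}(V)\big)=(f_*p)(V)=0,
\]
so $x$ has an open $p$-null neighborhood, and therefore $x\notin\supp(p)$. As $x$ was an arbitrary point outside $f^{-1}(Y')$, this shows $\supp(p)\subseteq f^{-1}(Y')$, i.e.~$p$ is supported on $f^{-1}(Y')$.

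There is no real obstacle here; the only thing to watch is the role played by non-negativity of $p$, as the statement emphasizes. It enters through the characterization of the support of a measure via vanishing on open neighborhoods -- this description, and indeed the very existence of a smallest closed full-measure set, is peculiar to non-negative measures. Concretely, for a signed measure cancellation in the pushforward spoils the argument: with $p=\delta_a-\delta_b+\delta_c$ on a three-point space where $f$ sends $a,b$ to one point and $c$ to another, $f_*p=\delta_{f(c)}$ would be ``supported on'' $\{f(c)\}$, while $p$ is not supported on $f^{-1}(\{f(c)\})=\{c\}$, its support being the whole space.
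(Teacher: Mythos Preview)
Your proof is correct and follows essentially the same route as the paper: pick $x\notin f^{-1}(Y')$, use the support hypothesis to find an open $V\ni f(x)$ with $(f_*p)(V)=0$, and pull it back to an open $p$-null neighborhood $f^{-1}(V)$ of $x$. Your additional remarks on the role of non-negativity and the signed-measure counterexample are a nice elaboration beyond what the paper spells out.
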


\begin{proof}
For $x\in X\setminus f^{-1}(Y')$, by assumption there is a neighborhood $U\ni f(x)$ to which $f_*p$ assigns zero measure. Therefore $(f_*p)(U) = p(f^{-1}(U)) = 0$, and $f^{-1}(U)$ is a neighborhood of $x$.
\end{proof}

\subsection{Construction of the Wasserstein space}
\label{conwsp}

\begin{deph}
 Let $X\in\cat{Met}$. The \emph{Wasserstein space} $PX$ is the set of probability measures on $X$ with finite first moment, with metric given by the \emph{Wasserstein distance}, or \emph{Kantorovich-Rubinstein distance}, or \emph{earth mover's distance}\footnote{For the different names, see the bibliographical notes at the end of Chapter 6 in \cite{villani}.}:
\begin{equation}
\label{W1def}
d_{PX}(p,q):= \inf_{r\in\Gamma(p,q)} \int_{X\times X} d_X(x,y) \, dr(x,y)
\end{equation}
where $\Gamma(p,q)$ is the set of probability measures on $X\times X$ with marginals $p$ and $q$, respectively.
\end{deph}

In terms of duality, one can also characterize the Wasserstein metric as
\begin{equation}\label{weak}
d_{PX}(p,q) = \sup_{f: X\to \R} \left| \int f(x) \, d(p-q)(x) \right| = \sup_{f:X\to\R} \left(\E_p[f] - \E_q[f]\right),
\end{equation}
where the sup is taken over all short maps \cites{villani,opttrans}, which we think of as well-behaved random variables. This duality formula provides one way to see that $d_{PX}$ is in fact a metric.

A simple special case of the Wasserstein distance is:

\begin{lemma}
Let $\delta(x_0)$ be the Dirac measure at some $x_0\in X$.\footnote{The notation $\delta(x_0)$ suggests a map $\delta:X\to PX$. This is indeed the case, as we will see in \ref{msk}.} Then
\begin{equation}
	d(\delta(x_0),p) = \int d(x_0,x)\, dp(x).
\label{Wdeltapeq}
\end{equation}

\label{Wdeltap}
\end{lemma}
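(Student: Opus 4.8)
The plan is to exploit the fact that when one of the two marginals is a Dirac measure, the coupling set $\Gamma(\delta(x_0),p)$ collapses to a single element, so the infimum in \eqref{W1def} is trivially attained and can be evaluated directly.

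First I would show that the unique element of $\Gamma(\delta(x_0),p)$ is the product measure $\delta(x_0)\otimes p$. This follows from Lemma~\ref{pullsupport} applied to the first projection $\pi_1\colon X\times X\to X$: since $(\pi_1)_\ast r=\delta(x_0)$ is supported on $\{x_0\}$, the measure $r$ is supported on $\pi_1^{-1}(\{x_0\})=\{x_0\}\times X$. On this subspace $\pi_2$ restricts to a homeomorphism onto $X$, and because $(\pi_2)_\ast r=p$ we conclude $r=\delta(x_0)\otimes p$. (Alternatively one can bypass Lemma~\ref{pullsupport} and argue directly that $r\big((X\setminus\{x_0\})\times X\big)\le\big((\pi_1)_\ast r\big)(X\setminus\{x_0\})=0$.)

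Then I would substitute $r=\delta(x_0)\otimes p$ into \eqref{W1def} and compute, using the definition of the product measure (Fubini):
\[
d(\delta(x_0),p)=\int_{X\times X} d_X(x,y)\,d(\delta(x_0)\otimes p)(x,y)=\int_X d_X(x_0,y)\,dp(y),
\]
which is finite since $p$ has finite first moment, by Lemma~\ref{ffmchar}\ref{allx0}; note also that $\delta(x_0)\in PX$ trivially, so the left-hand side is well-defined. There is essentially no obstacle here: the only point that deserves a line of care is the claim that $\Gamma(\delta(x_0),p)$ is a singleton, and even that is routine. If one instead prefers to argue via the duality formula \eqref{weak}, the bound $d(\delta(x_0),p)\le\int d_X(x_0,x)\,dp(x)$ comes from $f(x_0)-f(x)\le d_X(x_0,x)$ for every short $f$ followed by integration against $p$, while the matching lower bound is obtained by feeding the short map $x\mapsto d_X(x_0,x)$ (with the roles of $p$ and $\delta(x_0)$ exchanged in the supremum) into \eqref{weak}.
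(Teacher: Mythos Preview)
Your proposal is correct and follows essentially the same approach as the paper: observe that $\Gamma(\delta(x_0),p)$ contains only the product measure $\delta(x_0)\otimes p$, then evaluate the integral in \eqref{W1def} directly. The paper simply asserts the uniqueness of the coupling without further argument, whereas you supply a justification via Lemma~\ref{pullsupport} (and a direct alternative); your additional remarks on finiteness and the duality route are sound but not needed.
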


\begin{proof}
The only joint that has $\delta(x_0)$ as its first marginal and $p$ as its second marginal is the product measure $\delta(x_0)\otimes p$. Therefore,
\begin{align*}
 d(\delta(x_0),p) &= \int_{X\times X} d(y,x) \, d(\delta(x_0)\otimes p)(x,y)\\
 &= \int_{X\times X} d(y,x) \, d(\delta(x_0))(y)\,d p(x) \\
 &= \int_X d(x_0,x)\, dp(x) .\qedhere
\end{align*}
\end{proof}

\begin{thm}[{\cite[Theorem 1.8]{hitch}}]
\label{edgarthm}
 Let $X\in\cat{CMet}$. Then $PX$ is also a complete metric space.
\end{thm}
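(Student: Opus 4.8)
The plan is to show that every Cauchy sequence in $PX$ converges. Recall that a metric space is complete as soon as every sequence $(p_n)_{n\in\N}$ with $\sum_n d_{PX}(p_n,p_{n+1})<\infty$ converges, since an arbitrary Cauchy sequence has such a subsequence and a Cauchy sequence with a convergent subsequence converges. So I would start from such a summable sequence in $PX$ and, for each $n$, choose a coupling $r_n\in\Gamma(p_n,p_{n+1})$ with $\int_{X\times X}d_X\,dr_n\le d_{PX}(p_n,p_{n+1})+2^{-n}$, so that $\sum_n\int_{X\times X}d_X\,dr_n<\infty$. The strategy is to realize all the $p_n$ at once as laws of random variables $Y_n$ on a single probability space, coupled so that $(Y_n)$ is almost surely Cauchy, and to take the law of $\lim_n Y_n$ as the limit measure.

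Before gluing the $r_n$, I would reduce to the separable case. Each $p_n$ is Radon, hence concentrated on a $\sigma$-compact --- in particular separable --- subset $S_n\subseteq X$, so every $p_n$ lives on the closed subspace $S:=\overline{\bigcup_n S_n}$, which is separable and, being closed in $X$, complete, i.e.\ Polish. By Lemma~\ref{pullsupport} applied to the two coordinate projections, any element of $\Gamma(p_n,p_{n+1})$ is concentrated on $S\times S$; hence the Wasserstein distance of two measures supported on $S$ is the same whether computed in $S$ or in $X$, i.e.\ $PS\hookrightarrow PX$ is an isometric embedding. It therefore suffices to show that $(p_n)$ converges in $PS$, and from now on I may assume $X$ itself to be Polish.

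Now I would invoke the gluing lemma for couplings on Polish spaces (disintegrating $r_n$ along its $p_{n+1}$-marginal; see e.g.~\cite{villani}), applied iteratively, together with Kolmogorov's extension theorem, to obtain a probability measure $\mathbb P$ on $X^{\N}$ whose pushforward along the $(n,n+1)$-st pair of coordinates is $r_n$ for every $n$. Writing $Y_n$ for the $n$-th coordinate map, we then have $\mathrm{law}(Y_n)=p_n$ and $\E[d_X(Y_n,Y_{n+1})]=\int d_X\,dr_n$, hence $\E\big[\sum_n d_X(Y_n,Y_{n+1})\big]<\infty$, so that $\sum_n d_X(Y_n(\omega),Y_{n+1}(\omega))<\infty$ for $\mathbb P$-almost every $\omega$. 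For such $\omega$ the sequence $(Y_n(\omega))$ is Cauchy in $X$ and hence converges by completeness; let $Y_\infty:=\lim_n Y_n$, which is defined $\mathbb P$-almost surely and measurable as an almost sure pointwise limit of measurable maps, and let $p$ be its law.

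It remains to check that $p\in PX$ and that $d_{PX}(p_n,p)\to 0$. Since $d_X(Y_1,Y_\infty)\le\sum_n d_X(Y_n,Y_{n+1})$ is integrable, $p$ has finite first moment, because $\E[d_X(Y_\infty,x_0)]\le\E[d_X(Y_1,x_0)]+\E[d_X(Y_1,Y_\infty)]<\infty$ for any basepoint $x_0$; and $p$ is Radon since it is a Borel probability measure on a Polish space. Finally $(Y_n,Y_\infty)$ is a coupling of $p_n$ and $p$, so
\[
 d_{PX}(p_n,p)\ \le\ \E\big[d_X(Y_n,Y_\infty)\big]\ \le\ \E\Big[{\textstyle\sum_{m\ge n}d_X(Y_m,Y_{m+1})}\Big]\ \longrightarrow\ 0
\]
as $n\to\infty$, by dominated convergence with dominating function $\sum_m d_X(Y_m,Y_{m+1})$. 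The main obstacle I anticipate is purely measure-theoretic: making the gluing lemma and the infinite-product construction available, which is precisely why I first restrict to a separable complete subspace; once there, these are textbook facts and the remainder is the triangle inequality together with dominated convergence.
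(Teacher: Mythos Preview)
Your argument is correct and follows the standard ``coupling realization'' route: reduce to a fast Cauchy sequence, pass to a Polish subspace using inner regularity, glue the near-optimal couplings into a single process via disintegration and Kolmogorov extension, take the almost sure limit, and bound the Wasserstein distance by the expected tail sum. The reduction to the separable case is handled carefully, and once on a Polish space the disintegration and extension steps are indeed textbook. One minor remark: your appeal to Lemma~\ref{pullsupport} is not really needed, since for a coupling $r\in\Gamma(p_n,p_{n+1})$ with both marginals giving full measure to $S$ one has $r(S\times S)=1$ directly from $r(S\times X)=p_n(S)=1$ and $r(X\times S)=p_{n+1}(S)=1$; but invoking the lemma does no harm. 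You could alternatively cite Lemma~\ref{Pisoemb} for the isometric embedding $PS\hookrightarrow PX$.

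As for comparison with the paper: the paper does not give its own proof of this statement at all --- it simply records the result with a citation to \cite[Theorem~1.8]{hitch}. So there is nothing to compare against; you have supplied a complete proof where the paper defers to the literature. Your argument is essentially the one found in standard references on Wasserstein spaces (e.g.\ Villani), and it is perfectly adequate here.
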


Moreover, if $X$ is separable (resp.~compact), then $PX$ is also separable (resp.~compact), as proven for example in \cite[Theorem~6.18]{villani}. 

\begin{lemma}
If $f:X\to Y$ is an isometric embedding, then so is $Pf:PX\to PY$.
\label{Pisoemb}
\end{lemma}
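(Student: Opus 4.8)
We need to show: if $f: X \to Y$ is an isometric embedding, then $Pf: PX \to PY$ is an isometric embedding. Since $Pf$ is already known to be short (by functoriality, Lemma~\ref{Pfunctor}), it suffices to prove the reverse inequality $d_{PY}(Pf(p), Pf(q)) \ge d_{PX}(p, q)$ for all $p, q \in PX$.

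The plan is to use the duality formula~\eqref{weak} for the Wasserstein distance, $d_{PX}(p,q) = \sup_{g} (\E_p[g] - \E_q[g])$ over short maps $g: X \to \R$. First I would take any short map $g: X \to \R$ on $X$ and try to extend it to a short map $\tilde g: Y \to \R$; then $\E_{Pf(p)}[\tilde g] - \E_{Pf(q)}[\tilde g] = \E_p[\tilde g \circ f] - \E_q[\tilde g \circ f] = \E_p[g] - \E_q[g]$, because $f_*p$ integrates $\tilde g$ the same way $p$ integrates $\tilde g \circ f = g$ (using that $f$ is an isometry onto its image, so $\tilde g \circ f = g$). Taking the supremum over $g$ on the left-hand side of this identity (ranging over extendable $g$) and comparing with the full supremum defining $d_{PY}$ then yields $d_{PY}(Pf(p), Pf(q)) \ge d_{PX}(p,q)$, provided \emph{every} short map on $X$ arises as the restriction of a short map on $Y$ along $f$.

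The main obstacle is therefore this extension step, and the standard tool is the McShane–Whitney extension theorem: if $A \subseteq Y$ is any subset and $g: A \to \R$ is short (1-Lipschitz), then $\tilde g(y) := \inf_{a \in A} \big( g(a) + d_Y(y, a) \big)$ defines a short map $Y \to \R$ restricting to $g$ on $A$. Applying this with $A = f(X)$ and transporting $g$ across the isometry $f$ gives the required $\tilde g$. One should check the elementary facts that $\tilde g$ is well-defined (the infimum is finite, using the triangle inequality and finiteness of $g$ at one point), that it is indeed short, and that it restricts correctly; these are routine. With the extension in hand, the duality computation above closes the argument.

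Alternatively, one could argue directly on couplings: for $p, q \in PX$, every coupling $r \in \Gamma(Pf(p), Pf(q))$ on $Y \times Y$ is supported on $f(X) \times f(X)$ by Lemma~\ref{pullsupport} (since its marginals are supported there), hence pulls back along $f \times f$ to a coupling of $p$ and $q$ on $X \times X$ with the same transport cost, because $f$ preserves distances; this shows $d_{PY}(Pf(p),Pf(q)) \ge d_{PX}(p,q)$ as well, using completeness of $X$ only insofar as Lemma~\ref{pullsupport} is invoked. Either route works; I would present the duality argument as the cleaner one, since the extension theorem is the only nontrivial input and it is classical.
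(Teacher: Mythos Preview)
Your primary argument is correct and essentially identical to the paper's: both use the duality formula~\eqref{weak} together with the McShane--Whitney extension of short maps (the paper writes the extension as $y\mapsto \sup_{x\in X}(g(x)-d(x,y))$, you use the dual $\inf$ form, which is immaterial). Your alternative coupling argument is also sound but, as you note, leans on completeness of $X$ to ensure $f(X)$ is closed; the duality route avoids this and matches the paper.
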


\begin{proof}
This follows from the duality formula~\eqref{weak} and the fact that, for $X\subseteq Y$, every $1$-Lipschitz function $g:X\to\R$ can be extended to $Y$, for example via
\[
y\longmapsto \sup_{x\in X} \: \left( g(x) - d(x,y)\right). \qedhere
\]
\end{proof}

We would like the construction $X\mapsto PX$ to be functorial in $X$, and this indeed turns out to be the case. For $f:X\to Y$, we define $Pf:PX\to PY$ to be the map which takes every measure to its pushforward $f_*p\in PY$. In the dual picture, in terms of functionals, $f_*p$ is characterized by the substitution formula: for every $g:Y\to\R$,
\begin{equation}\label{pushforward}
\E_{f_*p}(g) = \int_Y g(y) \, d(f_*p)(y) = \int_X g(f(x))\, dp(x) = \E_p(g\circ f).
\end{equation}
While preservation of composition and identities are clear, there are still two small things to check in order to establish functoriality:

\begin{lemma}\label{Pfunctor}
Let $f:X\to Y$ be short, and $p\in PX$. Then,
\begin{enumerate}
\item $f_*p$ has finite first moment as well;
\item $f_*:PX\to PY$ is short.
\end{enumerate}
\end{lemma}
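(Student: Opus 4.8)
The plan is to treat the two items separately, reducing each to a property of $p$ by exploiting the shortness of $f$.

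For part (a), I would invoke Lemma~\ref{ffmchar}: it suffices to produce a single point of $Y$ from which the expected distance under $f_*p$ is finite. Since $p$ is a probability measure, $X$ is nonempty; pick any $x_0\in X$ and set $y_0:=f(x_0)$. Using the substitution formula~\eqref{pushforward} (legitimate since a short map is continuous) followed by the shortness of $f$,
\[
\int_Y d_Y(y_0,y)\,d(f_*p)(y) \;=\; \int_X d_Y(f(x_0),f(x))\,dp(x) \;\le\; \int_X d_X(x_0,x)\,dp(x),
\]
and the right-hand side is finite because $p$ has finite first moment (Lemma~\ref{ffmchar}, \ref{pffm}$\Rightarrow$\ref{allx0}). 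Hence $f_*p$ has finite first moment. The only point requiring a separate remark is that $f_*p$ must also be Radon in order to lie in $PY$; this is immediate from the continuity of $f$ and inner regularity of $p$, since the continuous image of a compact set is compact.

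For part (b), having established $f_*p,f_*q\in PY$ for all $p,q\in PX$, I would use the Kantorovich–Rubinstein duality~\eqref{weak}. If $g:Y\to\R$ is short, then the composite $g\circ f:X\to\R$ is again short, and~\eqref{pushforward} gives $\E_{f_*p}[g]-\E_{f_*q}[g] = \E_p[g\circ f]-\E_q[g\circ f]$. Taking the supremum over short $g:Y\to\R$ on the left thus amounts to a supremum over a subfamily of the short maps $X\to\R$ on the right, so
\[
d_{PY}(f_*p,f_*q) \;=\; \sup_{g:Y\to\R}\bigl(\E_{f_*p}[g]-\E_{f_*q}[g]\bigr) \;\le\; \sup_{h:X\to\R}\bigl(\E_p[h]-\E_q[h]\bigr) \;=\; d_{PX}(p,q),
\]
which is exactly the assertion that $f_*:PX\to PY$ is short.

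There is no genuine obstacle; the argument is routine once the earlier results are in place, the only mildly delicate point being the verification that $f_*p$ is Radon (addressed above). As an alternative to the dual argument in part (b) that avoids duality altogether, one can work with the primal definition~\eqref{W1def}: given a coupling $r\in\Gamma(p,q)$, its pushforward along $f\otimes f:X\otimes X\to Y\otimes Y$ lies in $\Gamma(f_*p,f_*q)$, and its transport cost can only decrease since $d_Y(f(x),f(y))\le d_X(x,y)$; taking the infimum over $r$ again yields $d_{PY}(f_*p,f_*q)\le d_{PX}(p,q)$.
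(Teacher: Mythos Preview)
Your proof is correct and, for part~(b), identical to the paper's: both use the Kantorovich--Rubinstein duality~\eqref{weak} together with the substitution formula~\eqref{pushforward} and the observation that $g\circ f$ is short whenever $g$ is. For part~(a) there is a minor cosmetic difference: the paper invokes characterization~\ref{fexpect} of Lemma~\ref{ffmchar} directly (if $g\in\Lip(Y)$ then $g\circ f\in\Lip(X)$, so $\E_{f_*p}(g)=\E_p(g\circ f)<\infty$), whereas you use characterization~\ref{existsx0} by picking a basepoint and bounding the expected distance. Both are one-line consequences of the same lemma, so there is no substantive divergence.
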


\begin{proof}
\begin{enumerate}
\item 
Let $g:Y\to\R$ be a Lipschitz map, we have $\E_{f_*p}(g) = \E_p(g\circ f) < \infty$ by~\eqref{pushforward} and by the assumption that $p$ has finite first moment.
\item 
\begin{align*}
d_{PY} \big( f_*p, f_*q \big) &= \sup_{g:Y\to \R} \left( \E_{f_*p}(g) - E_{f_*q}(g)\right) = \sup_{g:Y\to\R} \left( \E_p(g\circ f) - E_q(g\circ f)\right) \\
&\le \sup_{h:X\to\R} \left( \E_p(h) - \E_q(h)\right) = d_{PX} \big( p, q \big).\qedhere
\end{align*}
\end{enumerate}
\end{proof}

Thus we have a functor $P:\cat{Met}\to\cat{Met}$. By Theorem~\ref{edgarthm}, $P$ restricts to an endofunctor of $\cat{CMet}$, which we also denote by $P$. This is the functor we study in this paper. We call it the \emph{Kantorovich functor}, in accordance with \cite{breugel}. 

\section{The Wasserstein space as a colimit}
\label{seccolimit}

Thanks to the metric structure, it turns out that for $X\in\cat{CMet}$, the Wasserstein space $PX$ also arises as the colimit of a diagram involving certain powers of $X$. The intuition behind this colimit is very operational and formalizes the idea that a probability measure is an idealized version of a finite ensemble of elements of $X$, sampled randomly via repeated trials. In the next section, we will exploit this colimit characterization in order to equip $P$ with a monad structure.

\subsection{Power functors}
\label{powerfunsec}

For $X\in\cat{Met}$ and $n\in\N$, let $X^n$ be the metric space whose underlying set is the cartesian power, as in the case of $X^{\otimes n}$, but whose distances are renormalized,
        \begin{equation}\label{pfmetric}
        d_{X^n} \big( (x_1,\ldots,x_n), (y_1,\ldots,y_n) \big) := \frac{ d_X(x_1,y_1) + \ldots + d_X(x_n,y_n) }{n}.
        \end{equation}
One way to motivate this renormalization is that the diagonal map $X\to X^{\otimes n}$ is not short\footnote{This is related to the fact that the symmetric monoidal category $(\cat{Met},\otimes)$ is semicartesian, but not cartesian.}, while the diagonal map $X\to X^n$ is an isometric embedding which we call the \emph{$n$-copy embedding}. Another motivation is given in Appendix~\ref{operad}, where we show that $\cat{Met}$ is a pseudoalgebra of the simplex operad in such a way that the power $X^n$ is the uniform $n$-ary ``convex combination'' of $X$ with itself.

Let $X_n$ be the quotient of $X^n$ under the equivalence relation $(x_1,\ldots,x_n)\sim (x_{\sigma(1)},\ldots,x_{\sigma(n)})$ for any permutation $\sigma\in S_n$. The elements of $X_n$ are therefore multisets $\{x_1,\ldots,x_n\}$. The quotient metric is explicitly given by
    \begin{equation}\label{discopt}
    d_{X_n} \big( \{x_1\dots x_n\}, \{y_1\dots y_n\} \big) := \min_{\sigma\in S_n} \frac{1}{n} \sum_{i=1}^n d_X(x_i,y_{\sigma(i)}),
    \end{equation}
since this is exactly the minimal distance between the two relevant fibers of the quotient map $q_n : X^n\to X_n$, and these distances already satisfy the triangle inequality. Due to this formula, the composite $X\to X^n \to X_n$ is also an isometric embedding, which we call the \emph{symmetrized $n$-copy embedding} $\delta_n:X\to X_n$. It is clear that the assignments $X\mapsto X^n$ and $X\mapsto X_n$ are functorial in $X\in\cat{Met}$, so that we have functors $(-)^n:\cat{Met}\to\cat{Met}$ and $(-)_n:\cat{Met}\to\cat{Met}$. The quotient map is a natural transformation $q_n:(-)^n\Rightarrow(-)_n$.

There is a simple alternative way to write the metric~\eqref{discopt} that connects to the Wasserstein distance~\eqref{W1def}:

\begin{lemma}
\label{discKant}
\begin{equation}
	\label{discopt2}
	d_{X_n}\left( \{x_i\},\{y_i\}\right) = \min_A \frac{1}{n} \sum_{i,j} A_{ij}\, d(x_i, y_j),
\end{equation}
where $A$ ranges over all bistochastic matrices\footnote{We recall that a bistochastic matrix is a square matrix with non-negative entries, whose row and columns all sum to one.}.
\end{lemma}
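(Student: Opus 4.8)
The plan is to recognize the right-hand side of \eqref{discopt2} as the minimum of a linear functional over the Birkhoff polytope and to invoke the Birkhoff--von Neumann theorem. First I would observe that the set of $n\times n$ bistochastic matrices is a compact convex polytope in $\R^{n^2}$, and that the map $A\mapsto \sum_{i,j}A_{ij}\,d(x_i,y_j)$ is an affine (indeed linear) function of $A$. Hence its infimum over this polytope is attained and equals its minimum over the set of extreme points of the polytope.

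By the Birkhoff--von Neumann theorem, the extreme points of the Birkhoff polytope are exactly the permutation matrices, i.e.\ the matrices $A$ with $A_{ij}=1$ when $j=\sigma(i)$ and $A_{ij}=0$ otherwise, for some $\sigma\in S_n$. For such a matrix one has $\sum_{i,j}A_{ij}\,d(x_i,y_j)=\sum_{i=1}^n d(x_i,y_{\sigma(i)})$, so the minimum over permutation matrices of $\frac{1}{n}\sum_{i,j}A_{ij}\,d(x_i,y_j)$ is precisely the quantity $\min_{\sigma\in S_n}\frac{1}{n}\sum_{i=1}^n d(x_i,y_{\sigma(i)})$ appearing on the right-hand side of \eqref{discopt}, which by definition is $d_{X_n}\big(\{x_i\},\{y_i\}\big)$.

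Combining these two observations yields the claimed identity. One inequality --- that the right-hand side of \eqref{discopt2} is at most $d_{X_n}$ --- is elementary, since every permutation matrix is bistochastic, so the minimization over bistochastic matrices ranges over a larger set; the content of the lemma is the reverse inequality, and that is exactly where Birkhoff--von Neumann is used. I do not expect any real obstacle here: the only point requiring a moment's care is that the minimum over bistochastic matrices is actually attained, which follows from the compactness of the Birkhoff polytope together with the continuity of the linear objective. If a self-contained argument is preferred over citing Birkhoff--von Neumann, one can instead argue directly that any bistochastic $A$ with some entry strictly between $0$ and $1$ admits a cycle of such entries along which $A$ can be perturbed in both directions without leaving the polytope, so the objective is not increased by pushing $A$ toward a $0/1$ matrix; iterating terminates at a permutation matrix.
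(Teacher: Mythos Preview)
Your proposal is correct and follows essentially the same approach as the paper: both argue that the easy inequality holds because permutation matrices are bistochastic, and the reverse inequality follows from the Birkhoff--von Neumann theorem together with the fact that a linear objective on the Birkhoff polytope attains its minimum at a vertex. Your write-up is slightly more detailed (noting compactness for attainment and sketching a cycle-perturbation alternative), but the underlying argument is the same.
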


\begin{proof}
The right-hand side of~\eqref{discKant} is upper bounded by~\eqref{discopt} since every permutation matrix is bistochastic. Conversely, every bistochastic matrix is a convex combination of permutation matrices: according to the Birkhoff-von Neumann theorem, the bistochastic matrices of fixed dimension form a polytope whose vertices are precisely the permutation matrices. Therefore the linear optimization of~\eqref{discopt2} attains the minimum on a permutation matrix.
\end{proof}

It is not hard to see that if $X$ is complete, then so is every $X^S$. Since every $X_n$ is a coequalizer of $X^n$ via~\eqref{discopt} by the action of a finite group, it follows that $X_n$ is complete as well: if $(x_k)_{k\in\N}$ is a Cauchy sequence in $X_n$, then we can assume without loss of generality that $d(x_k,x_{k+1}) \leq 2^{-k}$ after passing to a subsequence. Then we can lift every $x_k \in X_n$ to $\hat{x}_k \in X^n$, in such a way that $d(\hat{x}_k,\hat{x}_{k+1}) \leq 2^{-k}$ as well, which implies that $(\hat{x}_k)_{k\in\N}$ is also Cauchy and therefore convergent. It follows that $\lim_{k\to\infty} x_k = q\left( \lim_{k\to\infty} \hat{x}_k \right)$, so that $X_n$ is complete.

\begin{lemma}
If $f:X\to Y$ is an isometric embedding, then so are $f^n : X^n\to Y^n$ and $f_n : X_n\to Y_n$.
\label{powerisoemb}
\end{lemma}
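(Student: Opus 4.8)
The plan is to verify both claims directly from the explicit metric formulas~\eqref{pfmetric} and~\eqref{discopt}, using only the defining property of an isometric embedding, namely that $d_Y(f(a),f(b)) = d_X(a,b)$ for all $a,b\in X$. Recall that $f^n$ and $f_n$ are already defined as morphisms in $\cat{Met}$ by the functoriality of $(-)^n$ and $(-)_n$ noted above, so the only thing to check is that they preserve distances exactly, not merely up to the short inequality.

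For $f^n:X^n\to Y^n$, fix $(x_1,\ldots,x_n)$ and $(y_1,\ldots,y_n)$ in $X^n$. Then
\[
	d_{Y^n}\big(f^n(x_\bullet), f^n(y_\bullet)\big) = \frac{1}{n}\sum_{i=1}^n d_Y\big(f(x_i), f(y_i)\big) = \frac{1}{n}\sum_{i=1}^n d_X(x_i,y_i) = d_{X^n}(x_\bullet, y_\bullet),
\]
where the middle equality is the isometry property of $f$. Hence $f^n$ is an isometric embedding.

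For $f_n:X_n\to Y_n$, note first that if $(x_1,\ldots,x_n)$ is any representative of a multiset $\{x_i\}\in X_n$, then $(f(x_1),\ldots,f(x_n))$ is a representative of $f_n(\{x_i\}) = \{f(x_i)\}\in Y_n$; this is exactly the well-definedness of $(-)_n$ on morphisms. Fixing representatives of $\{x_i\}$ and $\{y_i\}$ and applying~\eqref{discopt} in $Y_n$, we get
\[
	d_{Y_n}\big(f_n\{x_i\}, f_n\{y_i\}\big) = \min_{\sigma\in S_n} \frac{1}{n}\sum_{i=1}^n d_Y\big(f(x_i), f(y_{\sigma(i)})\big) = \min_{\sigma\in S_n} \frac{1}{n}\sum_{i=1}^n d_X(x_i, y_{\sigma(i)}) = d_{X_n}\big(\{x_i\},\{y_i\}\big),
\]
again using the isometry property termwise, and observing that the minimization runs over the same index set $S_n$ before and after applying $f$. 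This proves that $f_n$ is an isometric embedding.

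There is no real obstacle here: the statement is a routine unwinding of the definitions. The only point deserving a word of care is the passage to representatives in the case of $X_n$, which is precisely the content of the functoriality of $(-)_n$; once one fixes compatible representatives, both computations reduce to substituting $d_Y(f(a),f(b)) = d_X(a,b)$.
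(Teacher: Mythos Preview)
Your proof is correct. The paper in fact omits the proof of this lemma entirely, treating it as immediate from the explicit metric formulas; your direct verification via~\eqref{pfmetric} and~\eqref{discopt} is exactly the routine unwinding the paper leaves to the reader.
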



Categorically, it is more natural to consider the powers $X^S$ for nonempty finite sets $S$, where $X^S$ is the metric space whose elements are functions $x_{(-)}:S\to X$ equipped with the rescaled $\ell^1$-metric,
\[
    d_{X^S} \big( x_{(-)}, y_{(-)} \big) := \frac{1}{|S|} \sum_{s\in S} d_X(x_s,y_s).
\]
The idea is that the points of $X^S$ are finite samples indexed by a set of observations $S$, and a function $x_{(-)}:S\to X$ assigns to every observation $s$ its outcome $x_s$. Then it is natural to define the distance between two finite sets of observations as the average distance between the outcomes.

It is clear that $X^S$ is functorial in $X$, but how about functoriality in $S$? Without the rescaling, we would have functoriality $X^T\to X^S$ for arbitrary injective $S\to T$, corresponding to semicartesianness of $(\cat{Met},\otimes)$. But due to the rescaling by $\tfrac{1}{|S|}$, the functoriality now is quite different:

\begin{lemma}
\label{powerfunc}
Whenever $\phi:S\to T$ has \emph{fibers of uniform cardinality}, i.e.~when the cardinality of $\phi^{-1}(t)$ does not depend on $t\in T$, we have an isometric embedding $-\circ\phi:X^T\to X^S$.
\end{lemma}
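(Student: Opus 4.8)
The plan is to observe that precomposition with $\phi$ is manifestly a well-defined function $-\circ\phi : X^T \to X^S$ (it sends $x_{(-)}$ to the sample $s\mapsto x_{\phi(s)}$), so the only thing to prove is that it is distance-preserving; injectivity then follows automatically from the identity of indiscernibles. Writing $k := |\phi^{-1}(t)|$ for the common fiber cardinality, we have $|S| = k\,|T|$, and what must be checked is that for all $x_{(-)}, y_{(-)} \in X^T$,
\[
\frac{1}{|S|}\sum_{s\in S} d_X\big(x_{\phi(s)}, y_{\phi(s)}\big) \;=\; \frac{1}{|T|}\sum_{t\in T} d_X(x_t,y_t).
\]

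First I would reindex the left-hand sum by partitioning $S$ into the fibers of $\phi$: since each $s\in S$ lies in exactly one fiber $\phi^{-1}(t)$, and the summand $d_X(x_{\phi(s)},y_{\phi(s)})$ depends only on $t=\phi(s)$, the sum over $S$ collapses to $\sum_{t\in T} |\phi^{-1}(t)|\, d_X(x_t,y_t) = k\sum_{t\in T} d_X(x_t,y_t)$. Dividing by $|S| = k\,|T|$ cancels the factor $k$ and leaves exactly $d_{X^T}(x_{(-)},y_{(-)})$, which is the desired identity.

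The role of the hypothesis is precisely this cancellation: the rescaling by $\tfrac{1}{|S|}$ built into the metric of $X^S$ is matched by the multiplicity $k$ produced by the fibers only when all fibers have the same size; for a general $\phi$ one would instead land on a weighted average of the $d_X(x_t,y_t)$ with weights $|\phi^{-1}(t)|/|S|$, which is not $d_{X^T}$. Note also that since $S$ is nonempty we have $k\geq 1$, so $\phi$ is automatically surjective and $T$ is nonempty, and no degenerate cases arise. There is essentially no genuine obstacle here beyond this bookkeeping — the content of the lemma is simply that the renormalization in \eqref{pfmetric} has been chosen exactly so as to make these precomposition maps isometric.
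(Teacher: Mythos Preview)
Your proof is correct and follows essentially the same approach as the paper: both arguments reindex the sum $\sum_{s\in S} d_X(x_{\phi(s)},y_{\phi(s)})$ by the fibers of $\phi$, use the uniform fiber cardinality $|\phi^{-1}(t)| = |S|/|T|$ to factor it out, and cancel against the normalization $1/|S|$. Your additional remarks on why the hypothesis is needed and why no degenerate cases arise are accurate commentary, but the core computation is identical to the paper's.
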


We also denote this map $-\circ\phi$ by $X^\phi$.

\begin{proof}
 Let $x_{(-)},y_{(-)}\in X^T$. Then:
\begin{align*}
d_{X^S} \big( X^\phi(x_{(-)}), X^\phi(y_{(-)}) \big) &= d_{X^S} \big( (x_{\phi(-)}), (y_{\phi(-)}) \big) \\
&= \frac{1}{|S|} \sum_{s\in S} d_X(x_{\phi(s)},y_{\phi(s)}) \\
&= \frac{1}{|S|} \sum_{t\in T} |\phi^{-1}(t)| \, d_X(x_{t},y_{t}) \\
&= \frac{1}{|S|} \frac{|S|}{|T|} \sum_{t\in T} d_X(x_{t},y_{t}) \\
&= d_{X^T} \big( x_{(-)}, y_{(-)}\big). \qedhere
\end{align*}
\end{proof}

\begin{deph}
Let $\cat{FinUnif}$ be the monoidal category where:
 \begin{itemize}
  \item Objects are nonempty finite sets;
  \item Morphisms are functions $\phi:S\to T$ with fibers of uniform cardinality,
    \begin{equation}
    |\phi^{-1}(t)| = |S|/|T| \quad \forall t\in T.
    \end{equation}
  \item The monoidal structure is given by the cartesian product\footnote{This is not the categorical product. In fact, $\cat{FinUnif}$ does not have any nontrivial products, but it is semicartesian monoidal.}.
 \end{itemize}   
\end{deph}

In particular, $\cat{FinUnif}$ contains all bijections between nonempty finite sets, and all its morphisms are surjective maps. If we think of every finite set as carrying the uniform probability measure, then $\cat{FinUnif}$ is precisely the subcategory of $\cat{FinSet}$ which contains the measure-preserving maps.

In the following, we either use the powers $X^S$ for finite sets $S\in\cat{FinUnif}$, or equivalently the $X^n$. In the latter case, we take the $n$ to be the objects of a skeleton of $\cat{FinUnif}$ indexed by positive natural numbers $n$. By equivalence of categories, the choice between the two approaches is only a matter of notation. 

We write $X^{(-)}:\cat{FinUnif}^\op\to \cat{CMet}$ for the \emph{power functor} corresponding to Lemma~\ref{powerfunc}.

\begin{deph}
 Let $\cat{N}$ be the monoidal poset of positive natural numbers $\N\setminus\{0\}$ ordered by reverse divisibility, so that a unique morphism $n\to m$ exists if and only if $m|n$, and monoidal structure given by multiplication.
\end{deph}

$\cat{N}$ is the posetification of $\cat{FinUnif}$, in the sense that the canonical functor $|-|:\cat{FinUnif}\to \cat{N}$ which maps every $S$ to its cardinality is the initial functor from $\cat{FinUnif}$ to a poset. Since $|S\times T| = |S| \cdot |T|$, this functor is strict monoidal.

In analogy with the power functor $X^{(-)}:\cat{FinUnif}^\op\to\cat{CMet}$, we can also consider the \emph{symmetrized power functor} $X_{(-)}:\cat{N}^\op\to\cat{CMet}$ which takes $n\in\cat{N}$ to $X_n$, and the unique morphism $m\to m n$, or $m|mn$, is mapped to to the embedding $X_{m|mn}:X_m\to X_{m n}$ given by $n$-fold repetition on multisets,
    \begin{equation}
    \{x_1,\dots, x_m\} \longmapsto \{x_1,\dots x_m, \:\dots, \: x_1,\dots, x_m\}.
    \end{equation}
which is clearly natural in $X$. One can also consider this map as the bottom arrow of a diagram of the form
\begin{equation}\begin{tikzcd}[column sep=large]
\label{imnquot}
	X^T \ar{r}{X^\phi} \ar{d} & X^S \ar{d} \\
	X_{|T|} \ar{r}{X_{ |T| \, | \, |S|}} & X_{|S|}
\end{tikzcd}\end{equation}
The bottom arrow is determined uniquely by the universal property of the quotient map on the left.

\begin{lemma}
$X_{m|mn}:X_m\to X_{m n}$ is an isometric embedding.
\end{lemma}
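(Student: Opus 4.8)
The plan is to verify directly that the $n$-fold repetition map $X_{m|mn}\colon X_m\to X_{mn}$ is distance-preserving; injectivity then follows automatically from the identity of indiscernibles. The convenient tool is the bistochastic-matrix formula of Lemma~\ref{discKant}. Given multisets $\{x_i\}, \{y_i\}$ of size $m$, index the $mn$ points of $X_{m|mn}(\{x_i\})$ by pairs $(i,a)\in\{1,\dots,m\}\times\{1,\dots,n\}$, where the point at $(i,a)$ is $x_i$ (and likewise $y_i$ for $X_{m|mn}(\{y_i\})$). Then Lemma~\ref{discKant} reads
\[
d_{X_{mn}}\big(X_{m|mn}(\{x_i\}),X_{m|mn}(\{y_i\})\big) = \min_B \frac{1}{mn}\sum_{(i,a),(j,b)} B_{(i,a),(j,b)}\, d_X(x_i,y_j),
\]
where $B$ ranges over $mn\times mn$ bistochastic matrices.

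The core of the argument is a cost-preserving correspondence between bistochastic matrices on the two sides. For the inequality ``$\ge$'': given any bistochastic $B$ as above, set $A_{ij} := \tfrac{1}{n}\sum_{a,b} B_{(i,a),(j,b)}$. Summing the row (resp.\ column) sum constraints of $B$ over the block indices shows $\sum_j A_{ij} = \tfrac1n\sum_a 1 = 1$ (resp.\ $\sum_i A_{ij}=1$), so $A$ is an $m\times m$ bistochastic matrix; and substituting the definition of $A$ gives $\tfrac1m\sum_{i,j}A_{ij}d_X(x_i,y_j) = \tfrac1{mn}\sum_{(i,a),(j,b)}B_{(i,a),(j,b)}d_X(x_i,y_j)$, i.e.\ the transport cost of $A$ equals that of $B$. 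Minimising over $B$ and comparing with Lemma~\ref{discKant} for $X_m$ yields $d_{X_{mn}}(X_{m|mn}(\{x_i\}),X_{m|mn}(\{y_i\})) \ge d_{X_m}(\{x_i\},\{y_i\})$. For the reverse inequality we may simply invoke that $X_{m|mn}$ is a morphism of $\cat{CMet}$, hence short; or spell it out with the same device, noting that for an $m\times m$ bistochastic $A$ the matrix $B_{(i,a),(j,b)}:=\tfrac1n A_{ij}$ is $mn\times mn$ bistochastic with the same transport cost.

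The one step worth making explicit—and the only mildly subtle point—is that an optimal coupling $B$ on the repeated multisets need not respect the block structure: it may spread a single copy of $x_i$ across several copies of the various $y_j$. The averaging $A_{ij}=\tfrac1n\sum_{a,b}B_{(i,a),(j,b)}$ is precisely what repairs this, simultaneously preserving bistochasticity and transport cost. Everything else is routine bookkeeping with the normalisation factors $\tfrac1m$, $\tfrac1n$, $\tfrac1{mn}$.
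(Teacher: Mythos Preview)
Your proof is correct and essentially identical to the paper's: both arguments use Lemma~\ref{discKant} and pass between an $m\times m$ bistochastic matrix and an $mn\times mn$ one via block-averaging $A_{ij}=\tfrac{1}{n}\sum_{a,b}B_{(i,a),(j,b)}$ in one direction and uniform spreading $B_{(i,a),(j,b)}=\tfrac{1}{n}A_{ij}$ in the other. The only differences are cosmetic (the paper swaps the roles of the letters $A$ and $B$) and that you add a helpful remark on why the averaging step is the non-obvious point.
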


\begin{proof}
Let $\{x_i\},\, \{y_i\} \in X_m$. Then using Lemma~\ref{discKant}, we can write
\[
d_{X_{mn}}\left(X_{m|mn}(\{x_i\}), X_{m|mn}(\{y_i\})\right) = \frac{1}{mn} \min_A \sum_{i,j,\alpha,\beta} A_{(i,\alpha),(j,\beta)}\, d_X(x_i, y_j),
\]
where $A$ ranges over all bistochastic matrices of size $mn\times mn$ with rows and columns indexed by pairs $(i,\alpha)$ with $i=1,\ldots,m$ and $\alpha=1,\ldots,n$. Similarly,
\[
d_{X_m}\left(\{x_i\},\{y_i\}\right) = \frac{1}{m} \min_B \sum_{i,j} B_{ij}\, d_X(x_i, y_j).
\]
For given $B$, one can achieve the same value of the first optimization by putting~$A_{ij}^{\alpha\beta} := \tfrac{1}{n}B_{ij}$ for all values of the indices. Conversely, we can put $B_{ij} := \tfrac{1}{n} \sum_{\alpha,\beta} A_{(i,\alpha),(j,\beta)}$ in order to achieve the same value in the other direction.
\end{proof}

Thus the symmetrized power functor $X_{(-)}:\cat{N}^\op\to \cat{CMet}$ lands in the subcategory of complete metric spaces and isometric embeddings.

Again we have a quotient map $q_S:X^S\to X_{|S|}$ given by ``forgetting the labeling'' of particular outcomes and only remembering the multiset of values of the given function $x_{(-)}:S\to X$,
\begin{equation}\label{defq}
q_S\left( x_{(-)} \right) = \{x_s \,:\, s\in S\} \in X_{|S|}.
\end{equation}
It is the universal morphism which coequalizes all automorphisms of $X^S$ of the form $X^\sigma$, where $\sigma$ ranges over all bijections $\sigma:S\to S$.

In this way, we obtain a natural transformation $q: X^{(-)}\Rightarrow X_{|-|}$ of functors $\cat{FinUnif}^\op\to\cat{CMet}$.

\begin{lemma}
Via the map $q$ appearing in the formula~\eqref{defq}, the functor $X_{(-)}:\cat{N}^\op\to\cat{Met}$ is the left Kan extension of $X^{(-)}:\cat{FinUnif}^\op\to\cat{Met}$ along $|-|^\op$. Likewise for $\cat{CMet}$ in place of $\cat{Met}$.
\label{kanext}
\end{lemma}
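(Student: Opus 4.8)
The plan is to verify this through the pointwise colimit formula for left Kan extensions. Since $\cat{N}^\op$ is a poset, the value of $\Lan{X^{(-)}}{|-|^\op}$ at an object $n$ is the colimit of the composite $\mathcal{C}_n\to\cat{FinUnif}^\op\xrightarrow{X^{(-)}}\cat{CMet}$, where $\mathcal{C}_n := (|-|^\op\downarrow n)$ is the relevant comma category, and the unit transformation is assembled from the cocone legs. Existence of these colimits will drop out of the identification below. So the whole proof reduces to showing, for each $n$, that $\colim_{\mathcal{C}_n}X^{(-)}\cong X_n$ with the cocone leg at $S$ equal to $q_S$, compatibly as $n$ varies.

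First I would unravel $\mathcal{C}_n$. Because $\cat{N}^\op$ is a poset, an object of $\mathcal{C}_n$ is just a nonempty finite set $S$ with $|S|$ dividing $n$, and a morphism $S\to S'$ in $\mathcal{C}_n$ is exactly a morphism $S\to S'$ in $\cat{FinUnif}^\op$, i.e.~a uniform-fiber surjection $S'\to S$. The key --- and hardest --- step is then a finality argument: the full subcategory $\mathcal{C}_n^=\subseteq\mathcal{C}_n$ on the sets $S$ with $|S| = n$ is final, so the two colimits agree. To check this I would verify, for each $T$ with $|T| = m$ (so $m\mid n$), that the comma category $T\downarrow\mathcal{C}_n^=$ is nonempty --- take $S = T\times\{1,\dots,n/m\}$ with its first projection --- and connected: any two uniform-fiber surjections onto $T$ from $n$-element sets have all fibers of cardinality $n/m$, hence are related by a fiberwise bijection, and such bijections are precisely the morphisms of $T\downarrow\mathcal{C}_n^=$.

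Now $\mathcal{C}_n^=$ is the groupoid of $n$-element sets and bijections, equivalent to the one-object groupoid on the symmetric group $S_n$, and under $X^S\cong X^n$ the restricted diagram becomes $X^n$ with the coordinate-permutation action. Hence $\colim_{\mathcal{C}_n}X^{(-)}$ is the colimit of this action diagram, i.e.~the universal morphism out of $X^n$ coequalizing all the automorphisms $X^\sigma$, $\sigma\in S_n$ --- which is precisely the quotient of $X^n$ by $S_n$. It then remains to recognize this quotient as $X_n$: its underlying set is the set of $n$-element multisets, and the quotient pseudometric of a finite group action is $\min_\sigma\tfrac1n\sum_i d(x_i,y_{\sigma(i)})$, which by~\eqref{discopt} --- together with the observation made there that this expression already obeys the triangle inequality and separates distinct multisets --- is exactly $d_{X_n}$; and since $X_n$ is complete (as shown above), this colimit is computed identically in $\cat{Met}$ and in $\cat{CMet}$, with universal leg $q_n$. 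This is essentially the assertion, already recorded in the text, that $q_S$ is the universal morphism coequalizing the automorphisms $X^\sigma$. Tracking the finality reduction through then shows that the cocone leg at a general $S$ is $X^S\xrightarrow{q_S}X_{|S|}\to X_n$ with the second map the $(n/|S|)$-fold repetition, and that the functoriality in $\cat{N}^\op$ induced by the inclusions $\mathcal{C}_m\hookrightarrow\mathcal{C}_{mk}$ is the repetition map $X_{m\mid mk}$ --- so $(X_{(-)},q)$ is indeed the left Kan extension.

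The only genuine obstacle I anticipate is the bookkeeping around the two nested opposite categories and the precise finality check; everything past the correct identification of $\mathcal{C}_n$ is either elementary or already in hand from the explicit metric~\eqref{discopt} and the completeness argument for $X_n$. As an alternative, one could avoid comma categories altogether and verify the defining universal property directly: naturality under bijections forces any transformation $X^{(-)}\Rightarrow G\circ|-|^\op$ to be an $S_{|S|}$-invariant family, hence to factor uniquely through the $q_S$, and naturality under the remaining (surjective) morphisms of $\cat{FinUnif}^\op$ becomes naturality over $\cat{N}^\op$ --- but this needs the same input, namely that $q_S$ coequalizes the permutation action universally.
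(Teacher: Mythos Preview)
Your proof is correct, but it takes a different route from the paper's. The paper verifies the universal property of the Kan extension directly: given any $K:\cat{N}^\op\to\cat{Met}$ and $\alpha:X^{(-)}\Rightarrow K\circ|-|^\op$, naturality of $\alpha$ under bijections forces each $\alpha_S$ to be $S_{|S|}$-invariant, hence to factor uniquely through $q_S$ as $u_{|S|}\circ q_S$; naturality of the resulting $u$ over $\cat{N}^\op$ then follows from fullness of $|-|$. This is exactly the alternative you sketch in your last paragraph.

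Your main approach instead invokes the pointwise colimit formula, reducing the problem to computing $\colim_{\mathcal{C}_n}X^{(-)}$ via a finality argument. This is more machinery for the same payoff: the finality of $\mathcal{C}_n^=\subseteq\mathcal{C}_n$ is a genuine extra step (your check that any two uniform-fiber surjections onto $T$ are related by a fiberwise bijection is correct and is what makes the comma categories connected), and after it you still need the universal property of $q_n$ as the $S_n$-quotient, which is all the paper uses. What your route buys is a more systematic template that would transfer cleanly to analogous situations; what the paper's route buys is brevity and no need to track the induced functoriality through the comma-category inclusions. Both rest on the same core fact, namely that $q_S$ universally coequalizes the permutation action.
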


\begin{proof}
Again because $\cat{CMet}\subseteq\cat{Met}$ is reflective, it is enough to prove the claim for $\cat{Met}$.
There it follows from the universal property of the quotient map $q$. In more detail, we have the diagram
\[\begin{tikzcd}
\cat{FinUnif}^\op \ar{rr}{X^{(-)}} \ar[swap]{dr}{|-|} 
 \ar[bend right=8,""{name=XUP,pos=0.46},phantom]{rr} \ar[bend right=37,""{name=XDOWN,pos=0.532},phantom]{rr}
  \ar[Rightarrow,from=XUP,to=XDOWN,"q"]
 & & \cat{Met} \\
& \cat{N}^\op \ar[swap]{ur}{X_{(-)}}
\end{tikzcd}\]
Consider now another functor $K$ and a natural transformation $\alpha$ as in
\[\begin{tikzcd}
\cat{FinUnif}^\op \ar{rr}{X^{(-)}} \ar[swap]{dr}{|-|} 
 \ar[bend right=8,""{name=XUP,pos=0.46},phantom]{rr} \ar[bend right=37,""{name=XDOWN,pos=0.532},phantom]{rr}
  \ar[Rightarrow,from=XUP,to=XDOWN,"\alpha"]
 & & \cat{Met} \\
& \cat{N}^\op \ar[swap]{ur}{K}
\end{tikzcd}\]
Unraveling the definition, this means that for each $S\in\cat{FinUnif}$ we have a map
\[
\alpha_S : X^S \to K(|S|),
\]
and we need to find a factorization
\begin{equation}\label{defu}\begin{tikzcd}
X^S \ar[swap]{dr}{\alpha_{S}} \ar{r}{q} & X_{|S|} \uni{d}{u_{|S|}} \\
& K(|S|)
\end{tikzcd}\end{equation}
for some $u:X_{(-)}\Rightarrow K$. By naturality of $\alpha$ with respect to automorphisms $\sigma:S\to S$, we know that $\alpha_S$ is invariant under precomposing by $X^\sigma$. Therefore it factors uniquely through $q$ and this defines $u_{|S|}$, which is enough since $|-|$ is (essentially) bijective on objects. It remains to prove naturality of $u$, which means that for all $m,n\in\cat{N}$, the diagram
\[\begin{tikzcd}
X_m \ar{d}{u_m} \ar{rr}{X_{m|mn}} & & X_{mn} \ar{d}{u_{mn}} \\
K(m) \ar{rr}{K(m|mn)} & & K(mn)
\end{tikzcd}\]
commutes. This follows from the fact that $|-|:\cat{FinUnif}\to\cat{N}$ is also full, so that the morphism $X_{m|mn}$ is the image of some morphism in $\cat{FinUnif}$, together with naturality of $\alpha$ and the definition~\eqref{defu}.
\end{proof}

Equivalently, we could have obtained this result from the usual coend formula for left Kan extensions: although general copowers do not exist in $\cat{Met}$ or $\cat{CMet}$, since we require all distances to be finite, they do exist in this particular case since $\cat{N}^\op$ is a poset, so that the coend formula only requires the existence of the trivial copowers by the empty set and by singleton sets.

In conclusion, we also have endofunctors $(-)^S:\cat{CMet}\to\cat{CMet}$ and $(-)_n:\cat{CMet}\to\cat{CMet}$.

\subsection{Empirical distribution maps}
\label{empdist}

\begin{deph}
 Let $X\in\cat{Met}$. For $S\in\cat{FinUnif}$, the \emph{empirical distribution map} is the map $\iota^S:X^S\to PX$ which assigns to each $S$-indexed family $x_{(-)}\in X^S$ the uniform probability measure,
  \begin{equation}
   \iota^S(x_{(-)}) := \dfrac{1}{|S|}\sum_{s\in S} \delta(x_s).
  \end{equation}
\end{deph}

This map is clearly permutation-invariant, so it uniquely determines a map on symmetrized powers as well:

\begin{deph}
For $n\in\cat{N}$, the \emph{symmetric empirical distribution map} is the map $\iota_n:X_n\to PX$ given by assigning to each multiset $\{x_1,\dots, x_n\} \in X_n$ the corresponding uniform probability measure,
\begin{equation}
\iota_n (\{x_1\dots x_n\}) := \frac{\delta(x_1) + \dots + \delta(x_n)}{n}\;.
\end{equation}
\end{deph}

The empirical distribution carries less information than the original sequence. The information lost is precisely the ordering, as the following proposition shows.

\begin{prop}\label{isodelta}
 $\iota_n:X_n\to PX$ is an isometric embedding for each $X$ and $n$.
\end{prop}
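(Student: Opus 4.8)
The plan is to show two inequalities between $d_{PX}(\iota_n(\{x_i\}), \iota_n(\{y_i\}))$ and $d_{X_n}(\{x_i\},\{y_i\})$, using the explicit formula~\eqref{discKant} for the latter (via bistochastic matrices) and the definition~\eqref{W1def} of the Wasserstein distance via couplings, together with the characterization of $d_{X_n}$ as a minimum over bistochastic matrices from Lemma~\ref{discKant}.

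First I would prove $d_{PX}(\iota_n(\{x_i\}),\iota_n(\{y_i\})) \le d_{X_n}(\{x_i\},\{y_i\})$. Given an optimal permutation $\sigma$ for the right-hand side of~\eqref{discopt}, define the coupling $r := \frac{1}{n}\sum_i \delta(x_i)\otimes\delta(y_{\sigma(i)})$ on $X\times X$. Its marginals are $\iota_n(\{x_i\})$ and $\iota_n(\{y_i\})$, so it lies in $\Gamma(\iota_n(\{x_i\}),\iota_n(\{y_i\}))$, and $\int d_X\, dr = \frac1n\sum_i d_X(x_i,y_{\sigma(i)}) = d_{X_n}(\{x_i\},\{y_i\})$. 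Hence the infimum in~\eqref{W1def} is at most this value. (Equivalently, one may use a doubly stochastic matrix $A$ and the coupling $r = \frac1n\sum_{i,j}A_{ij}\,\delta(x_i)\otimes\delta(y_j)$, invoking Lemma~\ref{discKant}.)

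For the reverse inequality, I would start from an arbitrary coupling $r\in\Gamma(\iota_n(\{x_i\}),\iota_n(\{y_i\}))$. Since the marginals are supported on the finite sets $\{x_1,\dots,x_n\}$ and $\{y_1,\dots,y_n\}$, by Lemma~\ref{pullsupport} (applied to the two coordinate projections) the measure $r$ is supported on $\{x_1,\dots,x_n\}\times\{y_1,\dots,y_n\}$, hence is of the form $r = \sum_{i,j} r_{ij}\,\delta(x_i)\otimes\delta(y_j)$ with $r_{ij}\ge 0$. The marginal conditions force $\sum_j r_{ij} = 1/n$ and $\sum_i r_{ij} = 1/n$ for all $i,j$ — so $A_{ij} := n\,r_{ij}$ is a bistochastic matrix (a mild care is needed when the $x_i$ or $y_j$ are not all distinct, but then one is free to aggregate the weight arbitrarily among coinciding indices, which only helps). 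Then $\int d_X\, dr = \sum_{i,j} r_{ij}\,d_X(x_i,y_j) = \frac1n\sum_{i,j} A_{ij}\,d_X(x_i,y_j) \ge d_{X_n}(\{x_i\},\{y_i\})$ by Lemma~\ref{discKant}. Taking the infimum over $r$ gives $d_{PX}\ge d_{X_n}$, and combined with the first step this shows $\iota_n$ is distance-preserving, hence an isometric embedding (it is automatically injective, being distance-preserving on a genuine metric space).

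The main obstacle is the bookkeeping around repeated elements in the multisets $\{x_i\}$ and $\{y_i\}$: a coupling of the empirical measures only ``sees'' the distinct support points, so the passage back and forth between couplings and $n\times n$ bistochastic matrices indexed by $i,j$ is not literally a bijection. The clean way around this is to observe that both the coupling optimization and the bistochastic-matrix optimization are linear programs whose optimal value depends only on the distinct points and their multiplicities, so one may freely split or merge weights among coinciding indices without changing the optimum; alternatively, one notes that the infimum in~\eqref{W1def} is attained and any optimal coupling can be represented by a bistochastic $A$ as above. I do not expect any other difficulty; the completeness of $X$ plays no role here (the statement is for all $X\in\cat{Met}$, and indeed $PX$ and $X_n$ make sense without completeness).
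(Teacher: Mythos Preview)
Your argument is correct. Both directions are sound, and you have correctly identified the only genuine subtlety, namely the bookkeeping when some of the $x_i$ or $y_j$ coincide; your proposed fix (splitting mass among coinciding indices, which leaves the cost unchanged) is valid.

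The paper takes a different route that sidesteps this bookkeeping entirely. Rather than arguing directly in $X$, it introduces the finite \emph{pseudometric} space $N_{xy} := \{1_x,\ldots,n_x\}\amalg\{1_y,\ldots,n_y\}$ with the pseudometric pulled back from $X$, so that the $2n$ points are formally distinct even when their images in $X$ coincide. In $PN_{xy}$, couplings between the two uniform distributions are then \emph{literally} $\tfrac{1}{n}$ times bistochastic $n\times n$ matrices, with no case analysis needed; the identification with $d_{X_n}$ via Lemma~\ref{discKant} is then immediate. The reduction from $X$ to $N_{xy}$ uses that both $P$ and $(-)_n$ send isometric (pseudo-)embeddings to isometric embeddings (Lemmas~\ref{Pisoemb} and~\ref{powerisoemb}). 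So the paper trades your direct computation for a functoriality argument: your approach is more self-contained and elementary, while the paper's is cleaner on the repeated-points issue and showcases the structural lemmas already established.
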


\begin{proof}
For $\{x_i\},\{y_i\}\in X_n$, consider the finite set 
$$
N_{xy} := \{1_x,\ldots,n_x\}\amalg\{1_y,\ldots,n_y\},
$$
and equip it with the pseudometric which makes the canonical map $N_{xy}\to X$ into an isometric embedding, which means in particular that $d(i_x,j_y) = d_X(x_i,y_j)$. In the commutative square
\[
\begin{tikzcd}
N_{xy,n} \ar{r}{\iota_n} \ar{d} & PN_{xy} \ar{d} \\
X_n \ar{r}{\iota_n} & PX
\end{tikzcd}
\]
both vertical arrows are isometric embeddings by Lemmas~\ref{powerisoemb} and~\ref{Pisoemb}, where both $P$ and the lemmas generalize to pseudometric spaces in the obvious way. It is therefore enough to prove that in $PN_{xy}$, the distance between the uniform distribution on the points $\{1_x,\ldots,n_x\}$ and $\{1_y,\ldots,n_y\}$ is equal to the distance between these two sets as elements of $N_{xy,n}$. This is indeed the case, since the latter distance is given by~\eqref{discopt2},
\[
	d(\{i_x\},\{j_y\}) = \min_A \frac{1}{n}\sum_{ij} A_{ij}\, d(x_i, y_j),
\]
where $A$ ranges over all bistochastic matrices, which means exactly that $\tfrac{1}{n}A$ ranges over all couplings between the two uniform marginals as in the definition of the Wasserstein distance~\eqref{W1def}.
\end{proof}
 
It is clear that $\iota^S$ is natural in $X$, so we consider it as a transformation $\iota^S:(-)^S\Rightarrow P$ between the power functor at $S$ and the Kantorovich functor. Similarly, $\iota_n:(-)_n\Rightarrow P$.

\begin{lemma}\label{imn}
 Let $n,m\in\cat{N}$, and $X\in\cat{CMet}$. Then the following diagram commutes:
\begin{equation}\begin{tikzcd}
X_m \ar[swap]{dr}{\iota_m} \ar{rr}{X_{m|mn}} && X_{m n} \ar{dl}{\iota_{m n}} \\
& PX
\end{tikzcd}\end{equation}
\end{lemma}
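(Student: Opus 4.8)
The plan is to prove commutativity of the triangle by a direct element chase, since both $\iota_m$ and $\iota_{mn}$ are given by explicit formulas on multisets and $X_{m|mn}$ is the $n$-fold repetition map. First I would take an arbitrary multiset $\{x_1,\dots,x_m\}\in X_m$ and observe that $X_{m|mn}$ sends it to the multiset $\{x_1,\dots,x_m,\,\dots,\,x_1,\dots,x_m\}\in X_{mn}$ in which every $x_i$ occurs with $n$ times its original multiplicity. Applying $\iota_{mn}$ to this then gives
\[
\frac{n\,\delta(x_1)+\dots+n\,\delta(x_m)}{mn} \;=\; \frac{\delta(x_1)+\dots+\delta(x_m)}{m} \;=\; \iota_m(\{x_1,\dots,x_m\}),
\]
and since this holds for every element of $X_m$, the diagram commutes. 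The only thing requiring a little care is the bookkeeping of multiplicities: one should phrase the repetition map as multiplying each multiplicity by $n$, so that the factor $n$ in the numerator of $\iota_{mn}$ cancels the $n$ in the denominator regardless of whether the $x_i$ happen to be distinct.

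An alternative, more structural route would be to deduce the statement from the naturality of the unsymmetrized empirical distribution map $\iota^{(-)}\colon X^{(-)}\Rightarrow P$, where $P$ is regarded as a constant functor on $\cat{FinUnif}^{\op}$. Naturality is itself a one-line check: for $\phi\colon S\to T$ in $\cat{FinUnif}$, whose fibers all have cardinality $|S|/|T|$, one has $\iota^S(X^\phi(x_{(-)})) = \tfrac{1}{|S|}\sum_{s\in S}\delta(x_{\phi(s)}) = \tfrac{1}{|T|}\sum_{t\in T}\delta(x_t) = \iota^T(x_{(-)})$. Taking $S$ of cardinality $mn$, $T$ of cardinality $m$, and $\phi$ the evident projection with fibers of size $n$, the square \eqref{imnquot} relates $X^\phi$ with $X_{m|mn}$ via the quotient maps $q_S, q_T$. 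Combining this square, the defining factorizations $\iota^S = \iota_{mn}\circ q_S$ and $\iota^T = \iota_m\circ q_T$, naturality of $\iota^{(-)}$, and the fact that $q_T$ is an epimorphism then yields $\iota_{mn}\circ X_{m|mn} = \iota_m$.

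I do not expect any genuine obstacle here: the first approach is essentially a single arithmetic identity, and the second is purely formal, resting only on facts already established (the quotient factorization of $\iota^S$ and the square \eqref{imnquot}). The most one has to watch is stating the multiplicity bookkeeping correctly in the direct computation, or invoking the naturality squares with the correct objects in the structural one. I would present the direct computation as the main proof and, if at all, only remark on the structural interpretation.
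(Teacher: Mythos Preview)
Your direct computation is correct and is exactly the approach taken in the paper's own proof: pick an arbitrary multiset, apply the $n$-fold repetition, and cancel the factor $n$ in the empirical distribution. The structural alternative you sketch is also valid but is not used in the paper.
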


\begin{proof}
For $\{x_1,\dots, x_m\}\in X_m$,
\begin{align*}
\iota_{mn} \circ X_{m|mn} &(\{x_1\dots   x_m\}) =  \iota_{m n}  (\{x_1\dots x_m,\:\dots\:, x_1,\dots, x_m\}) \\[3pt]
&= \frac{\delta(x_1) + \dots + \delta(x_m) + \dots + \delta(x_1) + \dots + \delta(x_m)}{mn} \\
&= \frac{\delta(x_1) +\dots + \delta(x_m)}{m} 
= \iota_m (x_1\dots x_m). \qedhere
\end{align*}
\end{proof}

Therefore the symmetrized empirical distribution map $\iota_n$ is natural in $n$. It follows that the empirical distribution map $\iota^S$ is natural in $S$.

\subsection{Universal property}
\label{uniprop}

\begin{deph}
 Let $X$ be a complete metric space, and consider the symmetrized empirical distribution maps $\iota_n:X_n\to PX$, which are embeddings for each $n\in \cat{N}$. We write $I(X)$ for the union of their images,
\begin{equation}
I(X) := \bigcup_{n\in \cat{N}} \iota_n(X_n) \subseteq PX\;.
\end{equation}
\end{deph}

\begin{lemma}
$I(X)$ is the colimit of the functor $X_{(-)}:\cat{N}^\op\to\cat{Met}$, and also of the functor $X^{(-)}:\cat{FinUnif}^\op\to\cat{Met}$, with the $\iota_n$ and the $\iota^S$ forming the universal cocones.
\label{colimitmet}
\end{lemma}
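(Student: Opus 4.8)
The plan is to verify the universal property of the colimit directly for $X_{(-)}:\cat{N}^\op\to\cat{Met}$, exploiting that the index category $\cat{N}^\op$ is a directed poset and that all the transition maps as well as all the $\iota_n$ are isometric embeddings; the statement for $X^{(-)}:\cat{FinUnif}^\op\to\cat{Met}$ will then follow formally from the Kan extension of Lemma~\ref{kanext}.

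First I would check that $(\iota_n:X_n\to I(X))_{n\in\cat{N}}$ is indeed a cocone over $X_{(-)}$: each $\iota_n$ factors through $I(X)\subseteq PX$ by the very definition of $I(X)$, and the triangle identity $\iota_{mn}\circ X_{m|mn}=\iota_m$ is exactly Lemma~\ref{imn}. Next I would record the key structural facts. The poset $\cat{N}^\op$ is directed, since any two objects $n_1,n_2$ admit maps into $n_1n_2$; combined with Lemma~\ref{imn} this gives $\iota_{n_1}(X_{n_1})\subseteq\iota_{n_1n_2}(X_{n_1n_2})\supseteq\iota_{n_2}(X_{n_2})$, so any two points of $I(X)$ already lie inside a single $\iota_n(X_n)$ (a filtered union). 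Moreover $\iota_n$ is an isometric embedding by Proposition~\ref{isodelta}, and $I(X)$ carries the subspace metric inherited from $PX$, so $d_{I(X)}(\iota_n(a),\iota_n(a'))=d_{X_n}(a,a')$.

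Now let $(f_n:X_n\to Z)_{n\in\cat{N}}$ be an arbitrary cocone in $\cat{Met}$. Uniqueness of a mediating short map is immediate, since the images of the $\iota_n$ cover $I(X)$ and hence pin down the map pointwise. For existence I would set $u(p):=f_n(a)$ whenever $p=\iota_n(a)$. The point requiring care is well-definedness: if $\iota_n(a)=\iota_m(b)$, I push both representatives into $X_{mn}$, and using $\iota_{mn}\circ X_{n|mn}=\iota_n$ and $\iota_{mn}\circ X_{m|mn}=\iota_m$ (Lemma~\ref{imn}) together with injectivity of $\iota_{mn}$ (Proposition~\ref{isodelta}) I get $X_{n|mn}(a)=X_{m|mn}(b)$; the cocone identities for $f$ then give $f_n(a)=f_{mn}(X_{n|mn}(a))=f_{mn}(X_{m|mn}(b))=f_m(b)$. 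To see that $u$ is short, given $p,p'\in I(X)$ I use directedness to write $p=\iota_n(a)$ and $p'=\iota_n(a')$ for one and the same $n$, so that $d_Z(u(p),u(p'))=d_Z(f_n(a),f_n(a'))\le d_{X_n}(a,a')=d_{I(X)}(p,p')$, since $f_n$ is short and $\iota_n$ is an isometric embedding. This shows that $(I(X),(\iota_n))$ is the colimit of $X_{(-)}:\cat{N}^\op\to\cat{Met}$.

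For the $\cat{FinUnif}^\op$ statement I would argue formally: by Lemma~\ref{kanext}, $X_{(-)}$ is the left Kan extension of $X^{(-)}$ along $|-|^\op$, with unit the quotient transformation $q$. Since forming a left Kan extension and then taking a colimit computes the colimit of the original functor — concretely, restriction along the unit $q$ is a bijection between cocones on $X_{(-)}$ and cocones on $X^{(-)}$, natural in the vertex — the colimit of $X^{(-)}$ is again $I(X)$, with universal cocone the composite $\iota_{|S|}\circ q_S$, which equals $\iota^S$ by the definitions of $q_S$ and $\iota_n$. The only genuinely non-formal step in the whole argument is the well-definedness of $u$, i.e.\ that any two presentations of a point of $I(X)$ as an empirical distribution are identified by every cocone; everything else is bookkeeping with the directed structure of $\cat{N}^\op$.
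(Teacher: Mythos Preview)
Your proof is correct and follows essentially the same approach as the paper: reduce to $X_{(-)}$ via the Kan extension of Lemma~\ref{kanext}, define the mediating map on representatives, verify well-definedness by lifting to a common $X_{mn}$, and check shortness using that any two points live in a common $\iota_n(X_n)$ with $\iota_n$ an isometric embedding. The only cosmetic difference is that you deduce $X_{n|mn}(a)=X_{m|mn}(b)$ from injectivity of $\iota_{mn}$ (Proposition~\ref{isodelta}), whereas the paper observes directly that equal empirical distributions have equal relative frequencies; both arguments are equivalent.
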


\begin{proof}
By Lemma~\ref{kanext} and composition of Kan extensions, it is enough to prove this for $X_{(-)}$. So let the $\{g_n : X_n\to Y\}$ form a cocone, i.e.~a family of short maps such that $g_m = g_{mn} X_{m|mn}$. Since the $\iota_n:X_n\to I(X)$ are jointly epic by definition of $I(X)$, there can be at most one map $I(X)\to Y$ that is a morphism of cocones, which establishes uniqueness. Concerning existence, every point of $I(X)$ is of the form $\iota_n(\{x_i\})$ for some $n$ and some $\{x_i\}\in X_n$, and we therefore define its image in $Y$ to be $g_n(\{x_i\})$. This is well-defined: if $\iota_n(\{x_i\}) = \iota_m(\{x'_j\})$, then the relative frequencies of all points of $X$ in the multiset $\{x_i\}$ must coincide with those in $\{x'_j\}$. In particular this implies $X_{m|mn}(\{x_i\}) = X_{n|mn}(\{x'_j\})$, which is enough by the assumed naturality of the $\{g_m\}$. Finally, the resulting map is still short since any two points in $I(X)$ come from some common $X_n$, and $\iota_n:X_n\to I(X)$ is an isometric embedding.
\end{proof}

$I(X)$ is not complete unless $|X|\leq 1$. The following result is essentially proven in \cite[Proposition~1.9]{hitch} by reduction to the separable case treated in \cite{villani}. We give here an alternative proof that works without mentioning separability.

\begin{thm}\label{dense}
Let $X$ be a metric space. Then $I(X)$ is dense in $PX$.
\end{thm}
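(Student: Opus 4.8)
The plan is to show that for every $p\in PX$ and every $\varepsilon>0$ there is a uniform empirical measure $\tfrac1n\sum_{i=1}^n\delta(x_i)$ with $d_{PX}\big(p,\tfrac1n\sum_i\delta(x_i)\big)<\varepsilon$; since such a measure is $\iota_n$ of the multiset $\{x_1,\dots,x_n\}$ and, conversely, every point of $I(X)$ has this form, this is exactly the assertion. I would reach such an empirical measure in three reductions: (i) replace $p$ by a measure supported on a compact set; (ii) replace that by a finitely supported measure; (iii) round the weights to a finitely supported measure with rational weights, i.e.\ an element of $I(X)$. Throughout, the Wasserstein estimates are cleanest via the duality formula~\eqref{weak}: to conclude $d_{PX}(p,q)\le\eta$ it is enough to verify $\E_p(f)-\E_q(f)\le\eta$ for every short $f:X\to\R$.

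\emph{Step (i).} This is the only genuinely measure-theoretic point, and where Radon-ness and finite first moment are used. Fix $x_0\in X$ and put $g(x):=d(x_0,x)$, which is $p$-integrable since $p$ has finite first moment (cf.\ Lemma~\ref{ffmchar}). As $p$ is inner regular with respect to compact sets, choose compacts $L_k$ with $p(L_k)>1-1/k$ and set $K_k:=\{x_0\}\cup L_1\cup\dots\cup L_k$, an increasing sequence of compacts with $p\big(\bigcup_kK_k\big)=1$. The functions $g\cdot\mathbbm{1}_{K_k}$ increase pointwise to $g$ off a $p$-null set, so by the monotone convergence theorem $\int_{X\setminus K_k}g\,dp\to0$; fix $k$ with this tail $<\varepsilon/3$ and write $K:=K_k$. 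No countability or separability of $X$ intervenes here: Radon-ness produces the compact sets directly, and compactness is all one needs afterwards.

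\emph{Steps (ii) and (iii).} Cover the compact $K$ by finitely many open balls of radius $\varepsilon/6$; disjointifying their traces on $K$ yields Borel sets partitioning $K$, each of diameter $\le\varepsilon/3$. Keep the non-$p$-null ones, call them $A_1,\dots,A_m$, pick $c_j\in A_j$, and also set $A_0:=X\setminus K$, $c_0:=x_0$. For the finitely supported $q:=\sum_j p(A_j)\,\delta(c_j)\in PX$ and any short $f$,
\[
\E_p(f)-\E_q(f)=\sum_j\int_{A_j}\big(f(x)-f(c_j)\big)\,dp(x)\le\sum_j\int_{A_j}d(x,c_j)\,dp(x)<\tfrac{\varepsilon}{3}+\tfrac{\varepsilon}{3},
\]
where the two terms bound the contributions of $j\ge1$ (each $A_j$ has diameter $\le\varepsilon/3$) and of $j=0$ (choice of $K$); hence $d_{PX}(p,q)\le 2\varepsilon/3$. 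Writing $q=\sum_j w_j\delta(c_j)$ with $w_j>0$ and $\sum_jw_j=1$, choose $n\in\N$ and integers $a_j\ge0$ with $\sum_ja_j=n$ and $\sum_j|w_j-a_j/n|$ arbitrarily small (e.g.\ $a_j=\lfloor nw_j\rfloor$ with the deficit distributed arbitrarily). Then $q':=\sum_j(a_j/n)\,\delta(c_j)$ lies in $I(X)$, being $\iota_n$ of the multiset in which each $c_j$ occurs $a_j$ times, and since $\sum_j(w_j-a_j/n)=0$ we may subtract a constant from $f$ to get, for short $f$,
\[
\big|\E_q(f)-\E_{q'}(f)\big|=\Big|\sum_j(w_j-a_j/n)\big(f(c_j)-f(c_0)\big)\Big|\le\Big(\max_{i,j}d(c_i,c_j)\Big)\sum_j|w_j-a_j/n|<\tfrac{\varepsilon}{3}
\]
once $n$ is large enough. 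By the triangle inequality $d_{PX}(p,q')<\varepsilon$ with $q'\in I(X)$, which proves density.

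The only step that is not pure bookkeeping is (i) — making the first-moment mass of the tail small — and even that is a one-line monotone-convergence argument once one recalls that Radon measures are concentrated on compact sets; this is precisely what lets the proof avoid any appeal to separability. The points to watch are that the approximating measures $q,q'$ genuinely lie in $PX$ (they do, being finitely supported) and that the duality estimates are phrased for \emph{all} short maps $f$, as formula~\eqref{weak} requires.
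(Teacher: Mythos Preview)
Your proof is correct and follows essentially the same three-step reduction as the paper (compactly supported $\to$ finitely supported $\to$ rational weights, i.e.\ an element of $I(X)$). The only real difference is that you carry out all the Wasserstein estimates via the duality formula~\eqref{weak}, whereas the paper constructs explicit couplings and invokes Lemmas~\ref{Wdeltap} and~\ref{dcontract}; the two approaches are interchangeable here, and yours is arguably a bit more streamlined since it avoids the intermediate ``boundedly supported'' stage.
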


We prove this in several steps, starting with the compact case.

\begin{lemma}
 Let $X$ be a compact metric space. Then $I(X)$ is dense in $PX$.
\end{lemma}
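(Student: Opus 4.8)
The plan is to exploit that a compact metric space $X$ can be uniformly approximated by finite $\e$-nets and to push that approximation forward through $P$. First I would fix a probability measure $p\in PX$ and $\e>0$. Since $X$ is compact, choose a finite $\e$-net, i.e.\ a finite subset $F=\{z_1,\dots,z_k\}\subseteq X$ together with a Borel partition $X=\coprod_{j} B_j$ into sets of diameter at most $\e$ with $z_j\in B_j$. Let $r:X\to F$ be the map sending $B_j$ to $z_j$; then $r$ moves every point by at most $\e$, so by the duality formula~\eqref{weak} we get $d_{PX}(p, r_*p)\le\e$. Thus it suffices to approximate the finitely supported measure $r_*p = \sum_j p(B_j)\,\delta(z_j)$ by an element of $I(X)$, and for this it is in fact enough to approximate an arbitrary finitely supported measure $\mu=\sum_j a_j\,\delta(z_j)$ (with $a_j\ge 0$, $\sum_j a_j = 1$) to within $\e$.

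The second step is rational approximation of the weights. Choose rationals $a_j'=b_j/n$ with common denominator $n$, $b_j\in\N$, $\sum_j b_j = n$, and $\sum_j |a_j - a_j'|$ as small as we like; then $\mu':=\sum_j a_j'\,\delta(z_j)$ satisfies $d_{PX}(\mu,\mu')\le \diam(F)\cdot\sum_j|a_j-a_j'|$, which we can make $\le\e$. But $\mu'$ is exactly $\iota_n$ applied to the multiset containing each $z_j$ with multiplicity $b_j$, hence $\mu'\in\iota_n(X_n)\subseteq I(X)$. Combining the three estimates via the triangle inequality in $PX$ gives a point of $I(X)$ within $3\e$ of $p$; since $\e$ was arbitrary, $I(X)$ is dense in $PX$.

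\textbf{Main obstacle.} The only genuinely nontrivial input is the bound $d_{PX}(p,r_*p)\le\e$, i.e.\ controlling the Wasserstein distance between a measure and its pushforward under a ``small'' map; this is where compactness (existence of a finite $\e$-net with small-diameter cells) is really used, and it is handled cleanly either by the coupling $(\id\times r)_*p$ in the definition~\eqref{W1def} or by the Lipschitz-duality formula~\eqref{weak}. The rational-approximation step and the recognition of $\mu'$ as an empirical distribution are routine. Note this lemma is the base case of the induction proving Theorem~\ref{dense}; the general (non-compact, non-separable) case presumably proceeds by a tightness-type reduction, approximating $p\in PX$ by a measure supported on a compact subset using the finite-first-moment hypothesis, and then invoking Lemma~\ref{Pisoemb} to transport the compact-case result along the isometric embedding $PK\hookrightarrow PX$.
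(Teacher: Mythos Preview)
Your proposal is correct and follows essentially the same route as the paper: partition $X$ into finitely many Borel cells of diameter $\le\e$ (using compactness), push $p$ forward to a finitely supported measure supported on one point per cell, and then rationalize the weights to land in $I(X)$. The paper merely presents the two reductions in the opposite order (rational approximation first, then the finite-cover step) and uses the explicit coupling $\sum_i p|_{A_i}\otimes\delta(y_i)$ rather than phrasing it as $(\id\times r)_*p$, but these are cosmetic differences; your side remark about the non-compact case (reduce to compact support via finite first moment and tightness, then embed via Lemma~\ref{Pisoemb}) also matches the paper's strategy.
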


\begin{proof}
First of all, let $p\in PX$ be finitely supported, but not necessarily with rational coefficients. The measure $p$ is of the form $p = p_1\,\delta_{x_1} + \dots p_n\,\delta_{x_n}$ for some fixed finite $n$ and positive coefficients $p_i$. For every $\e>0$ and $i=1,\dots,n-1$, we can find rational numbers $q_i$ such that $q_i\le p_i$ and $p_i-q_i<\e$. Define also $q_n:=1-\sum_i q_i$, which is rational as well. The measure $q:= q_1\,\delta_{x_1} + \dots q_n\,\delta_{x_n}$ is then an element of $I(X)$. Moreover, denoting by $D$ the (finite) diameter of $X$, 
\begin{align*}
d(p,q) &\le \sum_{i=1}^{n-1} (p_i - q_i) \cdot d(x_i,x_n) \\
 &\le \sum_{i=1}^{n-1} (p_i - q_i) \cdot D \\
 &\le (n-1)\cdot \e \cdot D.
\end{align*}
Since this can be made arbitrarily small, $I(X)$ is dense in the space of (arbitrary) finitely supported probability measures.
It remains to be shown that finitely supported probability measures are dense in $PX$.

For given $\e > 0$, the open sets of diameter at most $\e$ cover $X$. By compactness, already finitely many of these, say $U_1,\ldots,U_n$, cover $X$. Consider the Boolean algebra generated by the $U_i$, its atoms are measurable sets $A_1,\ldots,A_k$ of diameter at most $\e$ which partition $X$.

$\{A_i\}$ is then a finite family of measurable subsets, mutually disjoint, which cover $X$. Choosing arbitrary $y_i\in A_i$, we have $d(x_i,y_i)<\e$ for every $x_i\in A_i$. For given $p\in PX$, the probability measure
\begin{equation}
p_{\e} := \sum_{i=1}^k p(A_i) \, \delta(y_i)\;.
\end{equation}
is finitely supported. To see that it is close to $p$, we choose a convenient joint,
\begin{equation}
m := \sum_{i=1}^k p|_{A_i} \otimes \delta(y_i),
\end{equation}
where $p|_{A_i}$ is the measure with $p|_{A_i}(B) := p(B\cap A_i)$. Therefore
\begin{align*}
d_{PX}(p,p_{\e}) &\le \int_{X\times X} d_X(x,y) \,dm(x,y) = \sum_i \int_{A_i\times X} d_X(x,y) \,dp(x) \,\delta(y_i)(y)\,dy \\
&= \sum_i \int_{A_i} d_X(x,y_i) \,dp(x) \le \sum_i \int_{A_i} \e \,dp(x) =\e \sum_i p(A_i) = \e,
\end{align*}
as was to be shown.
\end{proof}

Before getting to the general case, we record another useful fact.

\begin{lemma}
\label{dcontract}
Let $p,q_1,q_2\in PX$ and $\lambda\in [0,1]$. Then
\begin{equation}
\label{dcontracteq}
	d_{PX}\big(\lambda q_1 + (1-\lambda)p,\,\lambda q_2 + (1-\lambda)p\big) = \lambda\, d_{PX}(q_1,q_2).
\end{equation}
\end{lemma}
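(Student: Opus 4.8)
The plan is to deduce both inequalities at once from the Kantorovich--Rubinstein duality formula~\eqref{weak}, exploiting that it writes $d_{PX}$ as a supremum of quantities that are \emph{affine} in the pair of measures, so that mixing both arguments with the common measure $p$ simply rescales everything by $\lambda$. Before invoking~\eqref{weak}, I would check that the four measures occurring in~\eqref{dcontracteq} all lie in $PX$: a convex combination of finitely many measures of finite first moment again has finite first moment, for instance by Lemma~\ref{ffmchar}\ref{existsx0}, since for any fixed $y\in X$ one has $\int d(y,x)\,d\bigl(\lambda q_1 + (1-\lambda)p\bigr)(x) \le \lambda\int d(y,x)\,dq_1(x) + (1-\lambda)\int d(y,x)\,dp(x) < \infty$, and similarly with $q_2$ in place of $q_1$.

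Next, for every short map $f:X\to\R$ the mixing with $p$ cancels:
\[
\E_{\lambda q_1 + (1-\lambda)p}[f] - \E_{\lambda q_2 + (1-\lambda)p}[f] = \lambda\bigl(\E_{q_1}[f] - \E_{q_2}[f]\bigr).
\]
Taking the supremum over all short $f$ and pulling the nonnegative scalar $\lambda$ out of the supremum, the duality formula~\eqref{weak} applied on both sides yields
\[
d_{PX}\bigl(\lambda q_1 + (1-\lambda)p,\,\lambda q_2 + (1-\lambda)p\bigr) = \sup_{f:X\to\R}\lambda\bigl(\E_{q_1}[f] - \E_{q_2}[f]\bigr) = \lambda\,d_{PX}(q_1,q_2),
\]
which is exactly~\eqref{dcontracteq}; the degenerate case $\lambda = 0$ is trivial, as then both sides are $0$.

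For intuition I would also record the primal version of the ``$\le$'' half: given any coupling $r\in\Gamma(q_1,q_2)$, the measure $\lambda r + (1-\lambda)(\id,\id)_*p$ couples the two convex combinations and has transport cost $\lambda\int d_X\,dr$, because the diagonal part contributes nothing to the cost; optimizing over $r$ recovers $d_{PX}\le\lambda\,d_{PX}(q_1,q_2)$. The reverse inequality is the one that is cumbersome to obtain directly with couplings, as it would require decomposing an arbitrary coupling of the two mixtures, which is precisely why passing to the dual description is the efficient move. I therefore expect no genuine obstacle here: the only points that deserve an explicit word are the finite-first-moment bookkeeping and the trivial case $\lambda=0$.
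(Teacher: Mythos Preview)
Your proof is correct and follows exactly the paper's approach: the paper also derives the equality immediately from the duality formula~\eqref{weak}, and likewise records the primal coupling $\lambda r + (1-\lambda)(P\Delta)(p)$ as an instructive alternative for the `$\le$' direction. Your additional remarks on finite first moments and the case $\lambda=0$ are sound bookkeeping that the paper leaves implicit.
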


This follows immediately from the duality~\eqref{weak}, but it is instructive to derive the inequality `$\le$' directly by using the fact that any coupling $r\in\Gamma(q_1,q_2)$ gives a coupling
\[
	\lambda r + (1-\lambda) (P\Delta)(p) \quad\in\quad\Gamma\big(\lambda q_1 + (1-\lambda)p,\, \lambda q_2 + (1-\lambda)p\big) 
\]
where $\Delta : X\to X\times X$ is the diagonal embedding, and the second term does not contribute to the expected distance as it is supported on the diagonal.

\begin{lemma}
 Let $X$ be a metric space. Then the set of compactly supported probability measures is dense in $PX$. 
\end{lemma}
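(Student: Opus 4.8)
The plan is to approximate an arbitrary $p \in PX$ by a compactly supported measure by truncating $p$ outside a large ball and using the finite first moment hypothesis to control the discarded mass and, crucially, its contribution to the Wasserstein distance. Fix a basepoint $x_0 \in X$; by Lemma~\ref{ffmchar} the function $x \mapsto d(x_0,x)$ is $p$-integrable, so $\int_{d(x_0,x) > R} d(x_0,x)\, dp(x) \to 0$ as $R \to \infty$. This is the key quantitative input.

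First I would set $B_R := \{x \in X : d(x_0,x) \le R\}$, which is a closed set but not necessarily compact, so a naive truncation to $B_R$ does not immediately land in the compactly supported measures — this is the main obstacle. To get around it, I would combine the truncation with the previous lemma (density of $I(X)$, hence of finitely supported measures, when $X$ is compact — or more directly, observe that $p$ restricted to $B_R$ and then rescaled is a measure with finite first moment to which we can apply the compact-case argument only if $B_R$ is compact). A cleaner route: instead of truncating to a ball, use that $p$ is Radon, so it is inner regular with respect to compact sets; hence for every $\e > 0$ there is a compact $K \subseteq X$ with $p(X \setminus K) < \e$. Now define $p_K := p|_K + p(X\setminus K)\,\delta(x_0)$ where $x_0$ is chosen inside $K$ (possible once $p(K) > 0$). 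This $p_K$ is compactly supported, being supported on $K$. To estimate $d_{PX}(p, p_K)$, use the coupling $m := p|_K \otimes (\text{identity on the diagonal}) + (p|_{X\setminus K}) \otimes \delta(x_0)$, i.e. the joint that leaves the mass on $K$ fixed and transports the mass outside $K$ to $x_0$. Its cost is $\int_{X\setminus K} d(x,x_0)\, dp(x)$, which I must show is small.

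To make that last integral small I would refine the choice of $K$: pick $x_0 \in X$ first (any point, using that $X$ is nonempty), then by finite first moment and monotone convergence choose $R$ with $\int_{d(x_0,x) > R} d(x_0,x)\,dp(x) < \e/2$, and then by Radonness choose a compact $K' \subseteq B_R$ with $p(B_R \setminus K') $ small enough that $\int_{B_R \setminus K'} d(x_0,x)\, dp(x) < \e/2$ (the integrand is bounded by $R$ on $B_R$, so small measure suffices); enlarge $K'$ to $K := K' \cup \{x_0\}$. Then $\int_{X \setminus K} d(x_0,x)\, dp(x) \le \int_{B_R \setminus K'} d(x_0,x)\,dp(x) + \int_{X \setminus B_R} d(x_0,x)\,dp(x) < \e$, so $d_{PX}(p, p_K) \le \e$ by the coupling above. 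Since $\e$ was arbitrary and each $p_K$ is compactly supported, this shows the compactly supported measures are dense in $PX$. The only subtlety to double-check is the degenerate case $p(K) = 0$, which cannot occur once $p(X \setminus K) < 1$, i.e. once $\e < 1$, which we may assume.
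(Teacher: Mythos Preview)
Your proof is correct and uses the same two ingredients as the paper: finite first moment to control the transport cost of mass far from a basepoint, and the Radon property (inner regularity) to find a compact set carrying almost all the mass inside a bounded region. The paper separates these into two stages---first approximating by boundedly supported measures via the ball $B(x_0,\rho)$, then (reducing to bounded diameter) approximating those by compactly supported ones via tightness---whereas you fold both into a single estimate by choosing $R$ and then $K'\subseteq B_R$; this is a packaging difference rather than a genuinely different route, and your one-step version is arguably tidier.
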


\begin{proof}
We first show that boundedly supported measures are dense in $PX$ by finite first moment, and then that compactly supported measures are dense in boundedly supported measures by tightness.

For the first part, let $p\in PX$ and $x_0\in X$ be given. Let $B(x_0,\rho)$ be the closed ball of radius $\rho>0$ around $x_0$. We would like to approximate $p$ by the boundedly supported measure $p|_{B(x_0,\rho)}$, but this is not normalized. The most convenient way to fix this is to use
\[
 p' := p|_{B(x_0,\rho)} + p(X{\setminus} B(x_0,\rho)) \,\delta(x_0)
\]
By decomposing
 \begin{equation}\label{dec-pk}
  p = p|_{B(x_0,\rho)} +  p|_{X{\setminus} B(x_0,\rho)}
 \end{equation}
we can compute
\begin{align*}
  d_{PX}(p,p') & \stackrel{\eqref{dcontracteq}}{=} p\left(X{\setminus}B(x_0,\rho)\right)\, d_{PX}\left(\delta(x_0), \frac{p|_{X{\setminus} B(x_0,\rho)}}{p(X\setminus B(x_0,\rho))}\right) \stackrel{\eqref{Wdeltapeq}}{=} \int_{X\setminus B(x_0,\rho)} d(x,x_0)\, dp(x) \\[4pt] 
 & = \int_{X} d(x,x_0)\,dp(x) - \int_{B(x_0,\rho)} d(x,x_0)\,dp(x).
\end{align*}
The second term on the right-hand side is the expectation value of the function
 \begin{equation}
  f_\rho(x) := \begin{cases} d(x,x_0) & \textrm{if } d(x,x_0) \le \rho, \\ 0 & \textrm{otherwise}. \end{cases}
 \end{equation}
which converges pointwise to $d(-,x_0)$ as $\rho\to\infty$. By monotone convergence, this term converges to the first term, $\int_X d_X(x,x_0)\,dp(x)$, which is finite by the assumption of finite first moment. Hence $d_{PX}(p,p')\to 0$ as $\rho\to\infty$, and the approximating measures $p'$ are boundedly supported.

For the second part, we can then assume that $\diam(X)<\infty$. Let $p\in PX$. For suitably large compact $K\subseteq X$, we would like to approximate $p$ by the compactly supported measure $p|_{K}$, where $p|_K(A) := p(A\cap K)$, but this is not normalized. The most convenient way to fix this is to choose an arbitrary point $x_0\in K$, and to use
 \begin{equation}
  p' := p|_K + p(X{\setminus} K)\,\delta(x_0),
 \end{equation}
By decomposing
 \begin{equation}\label{dec-pk2}
  p = p|_K +  p|_{X\backslash K},
 \end{equation}
 we can compute
\[
	d_{PX}(p,p') \stackrel{\eqref{dcontracteq}}{=} p(X{\setminus} K)\: d\left(\frac{p_{X{\setminus} K}}{p(K)},\,\delta(x_0)\right) \stackrel{\eqref{Wdeltapeq}}{=} p(X{\setminus} K)\,\diam(X),
\]
By tightness, this tends to $0$ as $K\rightarrow X$.
\end{proof}

Theorem~\ref{dense} then follows as a corollary. 

We now consider what happens in the reflective subcategory of \emph{complete} metric spaces, $\cat{CMet}\subseteq\cat{Met}$.

\begin{thm}\label{pcolimit}
The space $PX$ is the colimit of the functor $X_{(-)}:\cat{N}^\op\to\cat{CMet}$, and also of the functor $X^{(-)}:\cat{FinUnif}^\op\to\cat{CMet}$.
\label{colimitcmet}
\end{thm}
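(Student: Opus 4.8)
The plan is to deduce this from its $\cat{Met}$-analogue, Lemma~\ref{colimitmet}, by exploiting the reflectivity of $\cat{CMet}$ inside $\cat{Met}$, whose reflector is the metric completion functor $A\mapsto\overline A$ (left adjoint to the inclusion) --- the same adjunction already used in the proofs of Lemmas~\ref{kanext} and~\ref{colimitmet}. Throughout, $X$ is complete, so that $PX\in\cat{CMet}$ by Theorem~\ref{edgarthm} and each $X_n\in\cat{CMet}$, making $X_{(-)}:\cat{N}^\op\to\cat{CMet}$ and $X^{(-)}:\cat{FinUnif}^\op\to\cat{CMet}$ well-defined.

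First I would invoke the general fact that for a reflective subcategory $\cat{C}\subseteq\cat{D}$ with reflector $L$, any diagram $F:J\to\cat{C}$ that admits a colimit in $\cat{D}$ also admits one in $\cat{C}$, with $\colim_{\cat{C}}F\cong L\bigl(\colim_{\cat{D}}F\bigr)$ and universal cocone obtained by postcomposing the $\cat{D}$-colimit cocone with the reflection unit; this is immediate from the adjunction together with the full faithfulness of the inclusion. Taking $\cat{C}=\cat{CMet}$, $\cat{D}=\cat{Met}$ and $F=X_{(-)}$, Lemma~\ref{colimitmet} identifies $\colim_{\cat{Met}}X_{(-)}$ with $I(X)$ and its universal cocone with the $\iota_n$, so $\colim_{\cat{CMet}}X_{(-)}\cong\overline{I(X)}$, with universal cocone the composites $X_n\xrightarrow{\iota_n}I(X)\hookrightarrow\overline{I(X)}$.

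Next I would identify $\overline{I(X)}$ with $PX$: by Theorem~\ref{dense}, $I(X)$ is dense in $PX$, and by Theorem~\ref{edgarthm}, $PX$ is complete, so $PX$ is (canonically isometric to) the completion of $I(X)$, and the inclusion $I(X)\hookrightarrow\overline{I(X)}$ becomes $I(X)\hookrightarrow PX$. Hence the universal cocone is precisely $\{\iota_n:X_n\to PX\}$, as the notation already suggests. The corresponding statement for $X^{(-)}:\cat{FinUnif}^\op\to\cat{CMet}$ then follows either by composing with the left Kan extension of Lemma~\ref{kanext} along $|-|^\op$, or simply by rerunning the reflection argument on the $\cat{Met}$-colimit cocone $\{\iota^S\}$ already supplied by Lemma~\ref{colimitmet}.

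I do not anticipate any real obstacle at this point: all of the analytic content --- functoriality of $P$, the Kan extension of Lemma~\ref{kanext}, the density Theorem~\ref{dense}, and completeness of $PX$ --- is already in hand. The only thing requiring genuine care is the categorical bookkeeping: verifying that the $\cat{CMet}$-colimit is indeed the completion of the $\cat{Met}$-colimit (and not some strictly larger complete space), and that the legs of the resulting cocone are the $\iota_n$ themselves rather than any rescaled variants.
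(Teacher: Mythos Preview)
Your proposal is correct and follows essentially the same route as the paper's proof: invoke Lemma~\ref{colimitmet} to get $I(X)$ as the $\cat{Met}$-colimit, use reflectivity of $\cat{CMet}\subseteq\cat{Met}$ to pass to the completion, and then identify $\overline{I(X)}\cong PX$ via Theorem~\ref{dense} and completeness of $PX$. The paper's own proof is the same argument compressed into one sentence.
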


\begin{proof}
Use Lemma~\ref{colimitmet} together with Theorem~\ref{dense}, and the fact that if $Y$ is a complete metric space with $X\subseteq Y$ dense, then $Y$ is the completion of $X$ with the inclusion as the universal morphism. 
\end{proof}

Since colimits over $\cat{FinUnif}^\op$ or $\cat{N}^\op$ in a category of functors into $\cat{Met}$ or $\cat{CMet}$ are computed pointwise\footnote{Technically, this relies on the fact that such colimits always exists in $\cat{Met}$ and $\cat{CMet}$, per Lemma~\ref{colims_met_lem}.}, this implies that the Wasserstein space construction in the form of the object $P\in[\cat{CMet},\cat{CMet}]$, is the colimit of the power functor construction:

\begin{cor}\label{empdcolimit} The empirical distribution maps form two colimiting cocones in the following way:
\begin{enumerate}
\item Consider the functor $(=)_{(-)}:\cat{N}^\op \to[\cat{CMet},\cat{CMet}]$ mapping $n\in \cat{N}$ to the symmetrized power functor $X\mapsto X_n$.
Then $P\in [\cat{CMet},\cat{CMet}]$ is the colimit of $(=)_{(-)}$, with the colimit cocone given by the symmetrized empirical distribution maps $\iota_n:(-)_n\Rightarrow P$.

\item Consider the functor $(=)^{(-)}:\cat{FinUnif}^\op\to[\cat{CMet},\cat{CMet}]$ mapping $S\in\cat{FinUnif}$ to the power functor $X\mapsto X^S$.
Then $P\in [\cat{CMet},\cat{CMet}]$ is the colimit of $(=)^{(-)}$, with the colimit cocone given by the empirical distribution maps $\iota^S:(-)^S\Rightarrow P$.
\end{enumerate}
\end{cor}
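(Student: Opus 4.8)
The plan is to deduce the statement from the objectwise colimit characterization of Theorem~\ref{pcolimit} via the standard principle that colimits in a functor category are computed objectwise whenever the relevant objectwise colimits exist in the target. First I would invoke Lemma~\ref{colims_met_lem} to guarantee that $\cat{CMet}$ admits all colimits of the shapes $\cat{N}^\op$ and $\cat{FinUnif}^\op$; this is what licenses the objectwise computation of such colimits in $[\cat{CMet},\cat{CMet}]$, and in particular ensures that the evaluation functors $\mathrm{ev}_X : [\cat{CMet},\cat{CMet}] \to \cat{CMet}$ jointly create colimits of these shapes. Consequently, a cocone under $(=)_{(-)}$ (resp.\ under $(=)^{(-)}$) in $[\cat{CMet},\cat{CMet}]$ is colimiting precisely when, for every $X\in\cat{CMet}$, its value at $X$ is a colimiting cocone under $X_{(-)}$ (resp.\ under $X^{(-)}$) in $\cat{CMet}$.

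Next I would verify that the empirical distribution maps do assemble into cocones in $[\cat{CMet},\cat{CMet}]$: naturality of $\iota_n:(-)_n\Rightarrow P$ and $\iota^S:(-)^S\Rightarrow P$ in $X$ was recorded right after their definitions, and naturality in $n$ (resp.\ in $S$) is exactly Lemma~\ref{imn} together with the remark following it. Evaluating at a fixed $X$, these cocones become $\iota_n:X_n\to PX$ and $\iota^S:X^S\to PX$, which by Theorem~\ref{pcolimit} are colimiting in $\cat{CMet}$. By the objectwise principle above, they are therefore colimiting in $[\cat{CMet},\cat{CMet}]$, which gives both claims. For part~(b) one could alternatively observe that Lemma~\ref{kanext}, applied objectwise, exhibits $(=)_{(-)}$ as the left Kan extension of $(=)^{(-)}$ along $|-|^\op$, so that the two colimits and their universal cocones coincide under $q$; since composition of Kan extensions was already used in Lemma~\ref{colimitmet}, this route is essentially free.

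Nothing deep remains once Theorem~\ref{pcolimit} is available; the only point genuinely requiring care is the appeal to Lemma~\ref{colims_met_lem}. Since $\cat{Met}$ and $\cat{CMet}$ are not cocomplete --- all distances are required to be finite --- one cannot invoke a blanket ``functor category into a cocomplete category is cocomplete'' statement, and must instead rely on the fact that these particular diagram shapes ($\cat{N}^\op$ a poset, and $\cat{FinUnif}^\op$ whose transition maps are surjective) do admit colimits in $\cat{CMet}$, so that the objectwise colimit is indeed a well-defined object of $[\cat{CMet},\cat{CMet}]$.
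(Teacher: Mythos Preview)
Your proposal is correct and matches the paper's own argument: the corollary is deduced from Theorem~\ref{pcolimit} by the pointwise computation of colimits in the functor category, with Lemma~\ref{colims_met_lem} supplying the existence of the relevant colimits in $\cat{CMet}$ (the paper records exactly this in the sentence and footnote immediately preceding the corollary). Your write-up is more explicit about why the empirical distribution maps form a cocone and about the role of Lemma~\ref{colims_met_lem}, but the route is the same.
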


\begin{remark}
Readers concerned with size issues may find it problematic that the endofunctor category $[\cat{CMet},\cat{CMet}]$ is not locally small, so that the above universal properties potentially involve bijections between proper classes (or large sets). One way to see that $[\cat{CMet},\cat{CMet}]$ is not locally small is to borrow the fact that the functor category $[\cat{CMet},\cat{Set}^\op]$ is not small from~\cite{freyd_street_size}, and choose a faithful functor $\cat{Set}^\op\to\cat{CMet}$, such as the composition of the power set functor $\cat{Set}^\op\to\cat{Set}$ with the discrete metric space functor $\cat{Set}\to\cat{CMet}$ which equips every set with the discrete $\{0,1\}$-valued metric. In this way, a large hom-set in $[\cat{CMet},\cat{Set}^\op]$ embeds into a hom-set of $[\cat{CMet},\cat{CMet}]$, which must therefore also be large.

It may be possible to alleviate this problem by uncurrying, using $(=)_{(-)} : \cat{N}^\op\times\cat{CMet}\to\cat{CMet}$ and $(=)^{(-)}:\cat{FinUnif}^\op\times\cat{CMet}\to\cat{CMet}$, as in the theory of graded monads developed in~\cite{ssm}.
\end{remark}

\section{Monad structure}
\label{secmonad}

The main result of this section is that the functor $P$ is part of a monad, with units and compositions defined in a way analogous to the Giry monad \cite{giry}. It was proven in \cite{breugel} that the restriction of $P$ to compact metric spaces carries a monad structure. In the spirit of categorical probability theory, the monad multiplication is given by averaging a measure on measures to a measure, and the unit by assigning to each point its Dirac measure.

An appealing feature of the Kantorovich functor is that its monad structure can be constructed directly from the colimit characterization in terms of the power functors defined in Section~\ref{seccolimit}. This uses the fact that the power functors carry the structure of a monad \emph{graded} by $\cat{FinUnif}^\op$, in the sense of a lax monoidal functor\footnote{An ordinary monad on a category $\cat{C}$ is graded by the terminal category $1$: being a monoid in $[\cat{C},\cat{C}]$, it is equivalently a lax monoidal functor $1\to[\cat{C},\cat{C}]$.} into the endofunctor category $[\cat{CMet},\cat{CMet}]$, and similarly for the symmetrized power functors in terms of $\cat{N}^\op$. This construction of the Kantorovich monad extends the idea that probability measures are formal limits of finite samples to the level of integration.

\subsection{The power functors form a graded monad}
\label{gradedmonad}

As we will see next, the functor $(=)^{(-)}:\cat{FinUnif}^\op\to[\cat{CMet},\cat{CMet}]$ has a canonical strong monoidal structure with respect to the monoidal structure on $\cat{FinUnif}$ given by the cartesian product. We assume the latter to be strict for notational convenience.

Concerning the unit, there is a canonical transformation $\delta:1_{\cat{CMet}}\Rightarrow(=)^1$ with components given by the identity isomorphisms $X\cong X^1$. For the multiplication, we use the currying maps $E^{S,T}:(X^S)^T\cong X^{S\times T}$. It takes a $T$-indexed family of $S$-indexed families $\{\{x_{ij}\}_{i\in S}\}_{j\in T}$ to the $(S\times T)$-indexed family $\{x_{ij}\}_{i\in S,\, j\in T}$. A straightforward computation shows that $E^{S,T}$ preserves distances, since distances add up across all components $i$ and $j$ and get rescaled by $|S|\cdot|T|$ in both cases. It is also clear that $E^{S,T}$ is natural in $X$.

We find it curious that at this stage, both of these structure maps are isomorphisms, resulting in a strong monoidal functor. While the relevant coherence properties are immediate by the universal properties, we state them here for convenient reference.

\begin{thm}\label{monoidalfunctorunsym}
The above structure transformations $\delta$ and $E^{-,-}$ equip the functor $(=)^{(-)}$ with a strong monoidal structure, i.e.~the following diagrams commute for all $X\in\cat{CMet}$:
 \begin{itemize}
  \item The unit triangles
        \begin{equation}\begin{tikzcd}
	  X^S \ar{r}{\delta} \idar{dr} & (X^S)^1 \ar{d}{E^{S,1}}  &  X^S \ar{r}{\delta^S} \idar{dr} & (X^1)^S \ar{d}{E^{1,S}} \\
	  & X^{S\times 1} & & X^{1\times S}
        \end{tikzcd}\end{equation}
  \item The associativity square
        \begin{equation}
	\label{Eupassoc}
	\begin{tikzcd}[row sep=large,column sep=large]
        ((X^R)^S)^T \ar{r}{E^{S,T}} \ar{d}{(E^{R,S})^T} & (X^R)^{S \times T} \ar{d}{E^{R,S \times T}} \\
        (X^{R\times S})^T \ar{r} {E^{R \times S, T}}& X^{R \times S \times T}
        \end{tikzcd}
	\end{equation}
 \end{itemize}
\end{thm}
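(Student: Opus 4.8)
The plan is to observe first that both structure transformations are pointwise isomorphisms and then to reduce the coherence axioms to trivial identities between currying maps. The currying map $E^{S,T}\colon (X^S)^T\to X^{S\times T}$ is a bijection with the evident inverse (uncurrying), and it was already noted above to be an isometry; the unit map $\delta\colon X\to X^1$ is the canonical identification, hence an isometric isomorphism. So $(=)^{(-)}$ is in fact \emph{strong} monoidal, and every edge in the two diagrams is invertible. Since $\delta$ and $E^{-,-}$ are natural in $X$, I would check commutativity of the displayed diagrams of natural transformations componentwise, i.e.\ for a fixed $X\in\cat{CMet}$, which is exactly what the statement asks.

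For a fixed $X$ I would then reduce further to underlying sets, using that the forgetful functor $\cat{CMet}\to\cat{Set}$ is faithful. On underlying sets $E^{S,T}$ and $\delta$ are precisely the currying and unit isomorphisms witnessing cartesian closedness of $\cat{Set}$, so the coherence diagrams are instances of the standard coherence of that structure; but since the diagrams are small, I would simply track an element. For the two unit triangles, an element of $X^S$ is a function $x_{(-)}\colon S\to X$, and both legs send it to the function $(s,*)\mapsto x_s$ on $S\times 1$ (resp.\ $(*,s)\mapsto x_s$ on $1\times S$), the identity edge being read through the canonical bijection $S\cong S\times 1$ (resp.\ $S\cong 1\times S$). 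For the associativity square, an element of $((X^R)^S)^T$ is a $T$-indexed family of $S$-indexed families of $R$-indexed families; both composites uncurry it completely, in two different orders, to the same function $x_{(-,-,-)}\colon R\times S\times T\to X$, so the square commutes.

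I expect no real obstacle here beyond bookkeeping: one must fix a convention for the order of composition in $[\cat{CMet},\cat{CMet}]$ and for the variances in $\cat{FinUnif}^\op$ so that the diagrams are well-formed, and recall that the relevant monoidal units are the singleton $1\in\cat{FinUnif}$ and the identity functor $1_{\cat{CMet}}$. The only genuinely metric input is the isometry of $E^{S,T}$ and $\delta$, which is what upgrades ``lax'' to ``strong''; this was already handled before the statement via the observation that distances add across components and rescale by $|S|\cdot|T|$ in the same way on both sides.
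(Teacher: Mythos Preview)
Your proposal is correct and matches the paper's own argument essentially verbatim: the paper simply notes that it suffices to verify commutativity on underlying sets, where the diagrams are standard properties of currying following from the universal property of exponential objects. Your element-chasing spells this out in slightly more detail than the paper does, but the strategy is identical.
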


For the proof, it is enough to verify commutativity at the level of the underlying sets, where these are standard properties of currying which follow from the universal property of exponential objects.

\subsection{The symmetrized power functors form a graded monad}
\label{monpf}

We now move on to consider the analogous structure on the symmetrized power functors $X\mapsto X_n$. By definition, the quotient map $q_n:X^n\to X_n$ is the universal map which coequalizes the action of the symmetric group $S_n$ permuting the factors. In order to analyze the graded monad structure, we need to analyze the power of a power. The four ways of forming a power of a power fit into the square
\begin{equation}
\begin{tikzcd}
	\label{doublequotient}
	(X^m)^n \ar{r}{q_n} \ar{d}{(q_m)^n} & (X^m)_n \ar{d}{(q_m)_n} \\
	(X_m)^n \ar{r}{q_n} & (X_m)_n
\end{tikzcd}
\end{equation}
which commutes by naturality of $q_n$. The left arrow has a universal property as well:

\begin{lemma}
In $\cat{CMet}$, the morphism $(q_m)^n$ is the universal map out of $(X^m)^n$ which coequalizes the action of $(S_m)^{\times n}$ given by acting on each outer factor separately.
\end{lemma}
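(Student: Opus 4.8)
The plan is to recognize that $(X^m)^n$, as a product of $n$ copies of $X^m$ with rescaled $\ell^1$-metric, carries an action of the group $(S_m)^{\times n}$ where the $k$-th copy of $S_m$ permutes the $m$ factors inside the $k$-th outer block. Since $q_m : X^m \to X_m$ is, by definition, the universal map coequalizing the $S_m$-action (equivalently, the quotient of $X^m$ by $S_m$ in $\cat{CMet}$), I want to show that applying the functor $(-)^n$ to this quotient yields the quotient of $(X^m)^n$ by $(S_m)^{\times n}$. The essential point is that the endofunctor $(-)^n : \cat{CMet}\to\cat{CMet}$ preserves this particular kind of colimit.

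First I would observe that $(X^m)^n$ is literally the $n$-fold tensor-like power with the metric rescaled by $\tfrac1n$, so as a \emph{set} and up to uniform rescaling of the metric it is $(X^m)^{\otimes n}$. By Proposition~\ref{tensor_cocont} (the tensor product preserves colimits in $\cat{Met}$, hence so does each functor $Y\mapsto Y^{\otimes n}$, and rescaling the metric by a constant is an isometry-preserving, colimit-preserving operation), the functor $(-)^n$ preserves colimits. In particular it preserves the coequalizer presenting $X_m$ as the quotient $X^m/S_m$: the map $(q_m)^n : (X^m)^n \to (X_m)^n$ exhibits $(X_m)^n$ as the colimit of the diagram obtained by applying $(-)^n$ to the parallel pair (indexed by $S_m$) whose coequalizer is $q_m$. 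Applying $(-)^n$ to the $S_m$-action on $X^m$ gives precisely the action of $S_m$ on $(X^m)^n$ that permutes the $m$ inner factors of \emph{every} outer block simultaneously — call this the diagonal copy of $S_m$ inside $(S_m)^{\times n}$. So $(q_m)^n$ is the universal map coequalizing this diagonal $S_m$.

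The remaining step is to upgrade ``coequalizes the diagonal $S_m$'' to ``coequalizes all of $(S_m)^{\times n}$'', and this is where one must argue carefully. The key observation is that $(q_m)^n$ also coequalizes each individual coordinate copy of $S_m$: indeed $(q_m)^n = q_m \times \cdots \times q_m$ componentwise (with the rescaled metric), and $q_m$ coequalizes $S_m$ in each slot, so acting by a permutation in the $k$-th slot alone does not change the value of $(q_m)^n$. Since the coordinate copies of $S_m$ generate $(S_m)^{\times n}$, the map $(q_m)^n$ coequalizes the whole group. For universality, suppose $g : (X^m)^n \to Z$ in $\cat{CMet}$ coequalizes the $(S_m)^{\times n}$-action; in particular it coequalizes the diagonal $S_m$, so by the universal property just established it factors uniquely as $g = h\circ (q_m)^n$ for a unique short map $h : (X_m)^n \to Z$. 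This proves the universal property, and uniqueness of $h$ is immediate since $(q_m)^n$ is (componentwise a quotient map, hence) epic in $\cat{CMet}$.

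The main obstacle I anticipate is the passage from the colimit-preservation of $(-)^n$ (which only directly yields the universal property with respect to the \emph{diagonal} $S_m$) to the stated universal property with respect to the full product group $(S_m)^{\times n}$; handling this requires the elementary but slightly fiddly remark that $(q_m)^n$ factors as a product of the individual $q_m$'s and that the coordinate subgroups generate $(S_m)^{\times n}$. One should also take a moment to confirm that $\cat{CMet}$ (not just $\cat{Met}$) has the relevant colimits and that they agree, which follows from reflectivity of $\cat{CMet}\subseteq\cat{Met}$ as used repeatedly earlier in the paper.
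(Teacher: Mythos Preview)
There is a genuine gap. Your deduction ``the tensor product preserves colimits in each variable, hence so does $Y\mapsto Y^{\otimes n}$'' is false: cocontinuity in each variable separately does not make the diagonal functor $Y\mapsto Y^{\otimes n}$ cocontinuous (it preserves only \emph{sifted} colimits, as in Lemma~\ref{powercocont}, and the quotient by a nontrivial group action is not a sifted colimit). Concretely, $(q_m)^n$ is \emph{not} the coequalizer of the diagonal $S_m$-action. Take $m=n=2$ and distinct points $a,b\in X$: the tuples $((a,b),(a,b))$ and $((a,b),(b,a))$ lie in different orbits of the diagonal $S_2$, yet $(q_2)^2$ sends both to $(\{a,b\},\{a,b\})$. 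So your Step~B fails, and with it the factorization argument in Step~D.

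The paper's proof avoids this by never invoking the diagonal functor. It uses Proposition~\ref{tensor_cocont} one factor at a time: for any fixed $Y$, the map $Y\otimes q_m$ is the coequalizer of the $S_m$-action on $Y\otimes X^m$; iterating across the $n$ tensor factors shows that $(q_m)^{\otimes n}$ is the coequalizer of the \emph{factorwise} action of $(S_m)^{\times n}$, and a final rescaling by $\tfrac1n$ transports this to $(q_m)^n$. Your observation that $(q_m)^n$ is componentwise $q_m$ is exactly the germ of this correct argument---you just need to run the colimit preservation one coordinate at a time rather than through the diagonal power functor.
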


\begin{proof}
For every space $Y$, the map $Y\otimes q_m : Y\otimes X^m \to Y\otimes X_m$ is also the coequalizer of the $S_m$-action on $X^m$, thanks to Proposition~\ref{tensor_cocont}. Therefore $(q_m)^{\otimes n} : (X^m)^{\otimes n} \to (X_m)^{\otimes n}$ is the coequalizer of the factor-wise action of $(S_m)^{\times n}$ on $(X^m)^{\otimes}$. Finally, the analogous statement for $(q_m)^n : (X^m)^n \to (X_m)^n$ follows by rescaling both metrics by $1/n$, which is an automorphism of the category and therefore preserves colimits.
\end{proof}

It follows that the diagonal morphism is the universal morphism which coequalizes the action of the wreath product group $S_m\wr S_n$, where $S_n$ acts on $(S_m)^{\times n}$ by permutation of the factors. We are not aware of any description for $(q_m)_n$ other than the factorization across $q_n$ by the universal property of the latter.

We now define $E_{m,n}:(X_m)_n\to X_{mn}$ by the universal property of the $S_m\wr S_n$-quotient map $(X^m)^n\to (X_m)_n$ as the unique morphism which makes the diagram
\begin{equation}
\begin{tikzcd}
	\label{Edowndef}
	(X^m)^n \ar{r}{E^{m,n}} \ar{d} & X^{mn} \ar{d}{q_{mn}} \\
	(X_m)_n \ar{r}{E_{m,n}} & X_{mn}
\end{tikzcd}
\end{equation}
commute. Explicitly, $E_{m,n}$ takes a multiset of $n$ multisets of cardinality $m$ and forms the union over the outer layer, resulting in a single multiset of cardinality $mn$. This is a graded version of the multiplication in the commutative monoid monad; in particular, in contrast to the $E^{m,n}$, the $E_{m,n}$ are not isomorphisms (unless $m=1$ or $n=1$). Naturality in $X$ follows directly from the definition. Concerning the unit, we have the composite isomorphism $X\cong X^1 \cong X_1$, which we also denote by $\delta$.

\begin{thm}\label{monoidalfunctorsym}
The above structure transformations $\delta$ and $E_{-,-}$ equip the functor $(=)_{(-)}$ with a lax monoidal structure, meaning that the following diagrams commute for all $X\in\cat{CMet}$:
 \begin{itemize}
  \item The unit triangles
        \begin{equation}\begin{tikzcd}
	  X_m \ar{r}{\delta} \idar{dr} & (X_m)_1 \ar{d}{E_{m,1}}  &  X_m \ar{r}{\delta_m} \idar{dr} & (X_1)_m \ar{d}{E_{1,m}} \\
	  & X_{m\times 1} & & X_{1\times m}
        \end{tikzcd}\end{equation}
  \item The associativity square
        \begin{equation}
	\label{Edownassoc}
	\begin{tikzcd}[row sep=large,column sep=large]
        ((X_\ell)_m)_n \ar{r}{E_{m,n}} \ar{d}{(E_{\ell,m})_n} & (X_\ell)_{m n} \ar{d}{E_{\ell,m n}} \\
        (X_{\ell m})_n \ar{r} {E_{\ell m, n}}& X_{\ell m n}
        \end{tikzcd}
	\end{equation}
 \end{itemize}
\end{thm}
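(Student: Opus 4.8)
The plan is to reduce every required coherence diagram for the symmetrized power functors $(=)_{(-)}$ to the corresponding already-established diagram for the unsymmetrized power functors $(=)^{(-)}$ from Theorem~\ref{monoidalfunctorunsym}, by pushing everything down along the various quotient maps. The key mechanism is that in each case the relevant quotient map out of a (power of a) power of $X$ is an \emph{epimorphism} in $\cat{CMet}$ (indeed a regular epi, being a coequalizer of a finite group action, and remaining so after tensoring by the results invoking Proposition~\ref{tensor_cocont}), so that an equation between two maps out of $X_{\ell m n}$-type objects can be checked after precomposition with that epi. Concretely, $E_{m,n}$ was \emph{defined} in~\eqref{Edowndef} as the unique factorization of $q_{mn}\circ E^{m,n}$ through the $S_m\wr S_n$-quotient $(X^m)^n\to (X_m)_n$, and $\delta_{\mathrm{sym}} : X\cong X_1$ is the image of $\delta : X\cong X^1$ under $q_1$ (which is an isomorphism). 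So every structure map for $(=)_{(-)}$ is, by construction, compatible with the corresponding structure map for $(=)^{(-)}$ along the quotient maps.

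First I would record the basic compatibility squares: diagram~\eqref{Edowndef} itself, together with the naturality square~\eqref{doublequotient} for $q$, and the analogous identity $q_1 \circ \delta^{X} = \delta^{X}_{\mathrm{sym}}$ at the unit level; these say precisely that the quotient maps assemble into a monoidal natural transformation from the strong monoidal functor $(=)^{(-)}$ to the lax one $(=)_{(-)}$ (once the theorem is proved), but more importantly they are all we need. For the two unit triangles, I would precompose the $X_m$-corner with $q_m : X^m \to X_m$ (an epi), lift to the corresponding unit triangle for $(=)^{(-)}$ from Theorem~\ref{monoidalfunctorunsym}, and use~\eqref{Edowndef} and naturality of $q$ to transport commutativity back down; since $q_m$ is epic, commutativity of the downstairs triangle follows. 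For the associativity square~\eqref{Edownassoc}, I would precompose the top-left corner $((X_\ell)_m)_n$ with the composite quotient $((X^\ell)^m)^n \to ((X_\ell)_m)_n$ (a composite of regular epis, hence epic), and then chase the cube whose top face is the already-established associativity square~\eqref{Eupassoc} for the $E^{-,-}$, whose vertical faces are built from instances of~\eqref{Edowndef} and~\eqref{doublequotient}, and whose bottom face is~\eqref{Edownassoc}: commutativity of all other faces plus epicness of the vertical map from the top-left corner forces the bottom face to commute.

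The main obstacle I anticipate is the bookkeeping in the associativity cube: one has to verify that the four ``vertical'' faces linking $((X^\ell)^m)^n$-style objects to $((X_\ell)_m)_n$-style objects genuinely commute, which requires knowing that the composite quotient $((X^\ell)^m)^n \to ((X_\ell)_m)_n$ has the expected universal property as a quotient by the iterated wreath product $(S_\ell \wr S_m)\wr S_n$, and that this is compatible with re-association to $S_\ell \wr (S_m \wr S_n)$ on the other side. This is exactly the place where Proposition~\ref{tensor_cocont} (preservation of colimits by $\otimes$, hence stability of regular epis under tensoring and the identification of $(q_\ell)^m$, $(q_m)_n$, etc.\ as quotients) does the real work, just as in the lemma immediately preceding~\eqref{Edowndef}. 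Once those faces are in hand, the diagram chase is formal. As the authors note, all of this could alternatively be verified directly on underlying sets, where $E_{m,n}$ is simply ``take the union of the outer layer of a multiset of multisets'' and the triangles and pentagon-free square reduce to the evident bijections between iterated multiset constructions; I would mention this as a sanity check but carry out the argument via the universal properties to keep it clean and to make the monoidality of $q$ manifest.
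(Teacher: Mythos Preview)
Your proposal is correct and follows essentially the same route as the paper: reduce to Theorem~\ref{monoidalfunctorunsym} via the cube whose ``unsymmetrized'' face is~\eqref{Eupassoc}, whose ``symmetrized'' face is~\eqref{Edownassoc}, and whose connecting maps are the iterated quotients, then use that the composite quotient $((X^\ell)^m)^n\to((X_\ell)_m)_n$ is epic. The paper dismisses the unit triangles as trivial and orients the cube with front/back rather than top/bottom faces, but the argument and the key technical ingredient (Proposition~\ref{tensor_cocont} to identify the side faces) are the same.
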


\begin{proof}
We reduce the claim to Theorem~\ref{monoidalfunctorunsym}. Only the associativity square is nontrivial.

By reasoning similarly to~\eqref{doublequotient}, composing the quotient maps results in a unique epimorphism $((X^\ell)^m)^n\to ((X_\ell)_m)_n$. In fact, we get a cube:
\begin{equation}
 \begin{tikzcd}
  & ((X^\ell)^m)_n \ar[near start]{dd}{((q_l)^m)_n} \ar{rr}{(q_m)_n}  && ((X^\ell)_m)_n \ar{dd}{((q_l)_m)_n} \\
  ((X^\ell)^m)^n \ar{dd}{((q_l)^m)^n} \ar{ur}{q_n} \ar[crossing over, near start]{rr}{(q_m)^n} && ((X^\ell)_m)^n \ar{ur}{q_n}  \\
  & ((X_\ell)^m)_n \ar[near end]{rr}{(q_m)_n} && ((X_\ell)_m)_n \\
  ((X_\ell)^m)^n \ar{ur}{q_n} \ar{rr}{(q_m)^n} && ((X_\ell)_m)^n  \ar[leftarrow, near end, crossing over]{uu}{((q_l)_m)^n}\ar{ur}{q_n}
 \end{tikzcd}
\end{equation}
where the top, bottom, right, and left faces commute by naturality of $q_n$, and the front and back faces commute by the naturality of $q_m$. 
Using this, we consider the cube
\begin{equation}
\label{symcube}
\begin{tikzcd}[row sep=large,column sep=large]
        & ((X_\ell)_m)_n \ar{rr}{E_{m,n}} \ar["(E_{\ell,m})_n" near start]{dd} & & (X_\ell)_{m n} \ar{dd}{E_{\ell,m n}} \\
	((X^\ell)^m)^n \ar[crossing over,"E^{m,n}" near start]{rr} \ar{dd}{(E^{\ell,m})^n} \ar{ur} & & (X^\ell)^{mn} \ar{ur} \\
        & (X_{\ell m})_n \ar["E_{\ell m, n}" near end]{rr}  & & X_{\ell m n} \\
	(X^{\ell m})^n \ar{rr}{E^{\ell m,n}} \ar{ur} & & X^{\ell m n} \ar{ur} 
	\ar[leftarrow, swap, crossing over,"E^{\ell, mn}"' near end]{uu}
\end{tikzcd}
\end{equation}
where the unlabeled diagonal arrows are the quotient maps discussed previously. We need to show that the back face commutes. The bottom and right faces commute by~\eqref{Edowndef}. The top face also commutes, thanks to
\[
\begin{tikzcd}
	((X^\ell)^m)^n \ar{rr}{E^{m,n}} \ar{d} && (X^\ell)^{mn} \ar{d} \\
	((X_\ell)^m)^n \ar{rr}{E^{m,n}} \ar{d} && (X_\ell)^{mn} \ar{d} \\
	((X_\ell)_m)_n \ar{rr}{E_{m,n}} && ((X_\ell)_m)_n
\end{tikzcd}
\]
and similarly for the left face. Finally, commutativity of the front face follows from Theorem~\ref{monoidalfunctorunsym}. Therefore, since $((X^\ell)^m)^n\to ((X_\ell)_m)_n$ is epi, this implies that the back face commutes as well.
\end{proof}

We can also consider the $\cat{N}^\op$-graded monad $(=)_{(-)}$ as the universal $\cat{N}^\op$-graded monad that one obtains from the $\cat{FinUnif}^\op$-graded monad $(=)^{(-)}$ by change of grading along $\cat{FinUnif}^\op\to\cat{N}^\op$. In fact, this follows from Lemma~\ref{kanext} and Theorem~\ref{monoidalkanext} in Appendix~\ref{kan}:

\begin{thm}
\label{FinUniftoNmonad}
Let $\cat{MonCat}$ be the bicategory of monoidal categories, lax monoidal functors, and monoidal transformations. Then the lax monoidal functor $(=)_{(-)}:\cat{N}^\op\to[\cat{CMet},\cat{CMet}]$ is the left Kan extension in $\cat{MonCat}$ of $(=)^{(-)}:\cat{FinUnif}^\op\to[\cat{CMet},\cat{CMet}]$ along $\cat{FinUnif}^\op\to\cat{N}^\op$.
\end{thm}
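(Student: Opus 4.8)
The plan is to reduce the claim to the abstract criterion of Theorem~\ref{monoidalkanext} in Appendix~\ref{kan}, which (as advertised in the text preceding this statement) gives conditions under which an ordinary left Kan extension of lax monoidal functors is automatically a Kan extension in $\cat{MonCat}$. So the proof has two ingredients to assemble: first, that $(=)_{(-)}$ is the ordinary (unenriched) left Kan extension of $(=)^{(-)}$ along the change-of-grading functor $\cat{FinUnif}^\op\to\cat{N}^\op$; and second, that the hypotheses of the abstract criterion are met, so that this ordinary Kan extension upgrades to a monoidal one together with its lax monoidal structure maps $\delta$ and $E_{-,-}$.

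First I would establish the pointwise ordinary Kan extension. By Corollary~\ref{empdcolimit}, both $(=)^{(-)}$ and $(=)_{(-)}$ are diagrams valued in the endofunctor category $[\cat{CMet},\cat{CMet}]$, and since colimits over $\cat{FinUnif}^\op$ or $\cat{N}^\op$ in that functor category are computed pointwise, it suffices to check the Kan extension property objectwise in $X$. But for fixed $X$ this is exactly the content of Lemma~\ref{kanext}: $X_{(-)}:\cat{N}^\op\to\cat{CMet}$ is the left Kan extension of $X^{(-)}:\cat{FinUnif}^\op\to\cat{CMet}$ along $|-|^\op$, with the comparison transformation given by the quotient maps $q_S$. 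Naturality of this identification in $X$ is clear since all the maps involved ($q_S$, the embeddings $X_{m|mn}$) are natural in $X$. Hence $(=)_{(-)}$ is the ordinary left Kan extension of $(=)^{(-)}$ along $\cat{FinUnif}^\op\to\cat{N}^\op$ in the $2$-category $\cat{CAT}$.

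Next I would verify that the lax monoidal structures constructed in Theorems~\ref{monoidalfunctorunsym} and~\ref{monoidalfunctorsym} are compatible in the way required by the abstract criterion: concretely, that the comparison transformation $q:(=)^{(-)}\Rightarrow (=)_{(-)}\circ(\cat{FinUnif}^\op\to\cat{N}^\op)$ is a monoidal transformation, i.e.~that $q$ intertwines $\delta$ with $\delta$ and $E^{-,-}$ with $E_{-,-}$. The unit compatibility is immediate from the definition $X\cong X^1\cong X_1$. The multiplication compatibility is precisely the commuting square~\eqref{Edowndef} that \emph{defines} $E_{m,n}$, now read as the assertion that $q$ is monoidal. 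Given this, Theorem~\ref{monoidalkanext} applies: the change-of-grading functor $\cat{FinUnif}^\op\to\cat{N}^\op$ is strong monoidal (indeed strict, as noted after the definition of $\cat{N}$), the ordinary left Kan extension exists and is pointwise, and the relevant colimits are preserved by the monoidal product of $[\cat{CMet},\cat{CMet}]$ — which is composition of endofunctors, and this preservation is exactly what Proposition~\ref{tensor_cocont} and the cocontinuity arguments in Section~\ref{secmonad} supply (the same input used to prove Theorem~\ref{monoidalfunctorsym}). The criterion then yields that $(=)_{(-)}$ with its lax monoidal structure is the left Kan extension in $\cat{MonCat}$.

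The main obstacle is the second ingredient: checking the hypotheses of the abstract criterion Theorem~\ref{monoidalkanext}, specifically that the colimits defining the Kan extension are preserved by tensoring (postcomposition/precomposition of endofunctors) on both sides, so that the induced lax structure on the Kan extension agrees with the explicitly constructed $E_{-,-}$. This is where one genuinely uses that $\otimes$ preserves the relevant colimits in $\cat{Met}$ and $\cat{CMet}$ (Appendix~\ref{colims_met_app}); the coherence bookkeeping — matching the universally-induced structure maps against the hands-on definitions~\eqref{Edowndef} and verifying the associativity cube~\eqref{symcube} is the image of the associativity cube for the abstract Kan extension — is routine once the preservation statement is in place, but it is the step that requires care.
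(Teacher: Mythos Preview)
Your high-level strategy is exactly the paper's: invoke Lemma~\ref{kanext} for the ordinary Kan extension, verify that $q$ is monoidal via diagram~\eqref{Edowndef}, and then apply Theorem~\ref{monoidalkanext}. Those first two steps match the paper precisely.

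However, you have guessed at the hypotheses of Theorem~\ref{monoidalkanext}, and your guess does not match the actual criterion. The paper's Theorem~\ref{monoidalkanext} does \emph{not} require that the Kan extension be pointwise or that colimits be preserved by the monoidal product of $[\cat{CMet},\cat{CMet}]$; Proposition~\ref{tensor_cocont} plays no role here. The actual hypotheses are: (i) the functor $G=|{-}|^\op:\cat{FinUnif}^\op\to\cat{N}^\op$ is strong monoidal \emph{and essentially surjective}, and (ii) the transformation $\lambda\otimes\lambda$ (here $q\otimes q$) is an epimorphism in the functor category $[\cat{FinUnif}^\op\times\cat{FinUnif}^\op,[\cat{CMet},\cat{CMet}]]$. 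You note strong monoidality but omit essential surjectivity, and you never address the epimorphism condition at all. The paper dispatches (ii) in one line: each double quotient map $(X^m)^n\to (X_m)_n$ is already an epimorphism, so the componentwise transformation is epi.

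So your proof sketch is close but would not go through as written: the ``main obstacle'' you identify (colimit preservation under composition of endofunctors) is a red herring for this particular criterion, and the genuine remaining checks --- essential surjectivity of $|{-}|$ and the epi property of $q\otimes q$ --- are both much easier than what you propose to do.
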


\begin{proof}
By Lemma~\ref{kanext}, this Kan extension works in $\cat{Cat}$, and it is clear that $\cat{FinUnif}^\op\to\cat{N}^\op$ is strong monoidal and essentially surjective. In order to apply Theorem~\ref{monoidalkanext}, it remains to check two things: first, that the transformation $q:(=)^{(-)}\to (=)_{(-)}$ is monoidal, which boils down to the diagram
\[\begin{tikzcd}
	(X^m)^n \ar{r}{E^{m,n}} \ar{d}{q\,\circ\, q} & X^{mn} \ar{d}{q} \\
	(X_m)_n \ar{r}{E_{m,n}} & X_{mn}
\end{tikzcd}\]
which is~\eqref{Edowndef} again. Second, that $q\otimes q$ is an epimorphism in the functor category $[\cat{FinUnif}^\op\times\cat{FinUnif}^\op,[\cat{CMet},\cat{CMet}]]$, which follows from the fact that even every individual double quotient map $(X^m)^n\to (X_m)_n$ is an epimorphism.
\end{proof}

\subsection{The monad structure on the Kantorovich functor}
\label{msk}

Now that we have shifted the graded monad structure from $\cat{FinUnif}^\op$ to $\cat{N}^\op$, we shift it one step further to a lax monoidal functor $1\to[\cat{CMet},\cat{CMet}]$, i.e.~to an ungraded monad on $\cat{CMet}$ whose underlying functor is $P$.

We define the unit and multiplication maps in terms of the power functors and the empirical distribution maps.

\begin{deph}
For $X\in\cat{CMet}$ and $n\in N$, The \emph{Dirac delta embedding} is the composite
\[\begin{tikzcd}
X \ar{r}{\delta} & X^1 \ar{r}{\iota_1} & PX,
\end{tikzcd}\]
which we also denote by $\delta$.
\end{deph}

Proposition~\ref{isodelta} implies that $\delta$ is an isometric embedding. As a composite of natural transformations, we also have naturality $\delta: \1 \Rightarrow P$. Before getting to the multiplication, we need another bit of preparation. As we show in Corollary~\ref{sifted_exist}, $\cat{CMet}$ has sifted colimits. These colimits are preserved by the power functors:

\begin{lemma}
Both the power functors $(-)^S$ and the symmetrized power functors $(-)_n$ preserve sifted colimits in $\cat{CMet}$.
\label{powercocont}
\end{lemma}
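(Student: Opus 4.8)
The plan is to reduce the claim about $(-)^S$ to a statement about the $\ell^1$-tensor product, and then to obtain the claim about $(-)_n$ from it by a coequalizer argument analogous to the one already used for completeness of $X_n$. First I would observe that $(-)^S$ differs from $(-)^{\otimes|S|}$ only by rescaling the metric by $1/|S|$, and rescaling is an automorphism of $\cat{CMet}$, hence preserves all colimits; so it suffices to show that $(-)^{\otimes n}$ preserves sifted colimits. For this I would invoke Proposition~\ref{tensor_cocont}, which says $-\otimes Y$ preserves colimits in $\cat{CMet}$ for fixed $Y$; iterating, $X\mapsto X^{\otimes n}$ is a composite of functors each of which preserves the relevant colimits in one variable. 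The only subtlety is that $X^{\otimes n}$ is a functor of the \emph{single} variable $X$ plugged into all $n$ slots, i.e.\ it is $\otimes^{(n)}\circ\Delta$ where $\Delta:\cat{CMet}\to\cat{CMet}^n$ is the diagonal; so I need that (i) $\Delta$ preserves sifted colimits and (ii) the $n$-fold tensor $\cat{CMet}^n\to\cat{CMet}$ preserves them. Point (ii) follows from Proposition~\ref{tensor_cocont} applied in each variable separately together with the fact that a colimit in $\cat{CMet}^n$ is computed coordinatewise. Point (i) is exactly the defining property of sifted colimits: a colimit is sifted precisely when the diagonal functor into finite (here: $n$-fold) products is final/cofinal in the appropriate sense, equivalently when $\Delta$ preserves that colimit. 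This is where the hypothesis ``sifted'' is essential and cannot be weakened to ``filtered'' would-be overkill or to arbitrary colimits: the power functor genuinely fails to preserve, e.g., coproducts.

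For the symmetrized power functors, I would then use that $X_n$ is the coequalizer (equivalently, the colimit of the $S_n$-action groupoid) of $X^n$ under the permutation action, as already noted in the excerpt and reused in the proof of Theorem~\ref{monoidalfunctorsym}. Given a sifted diagram $D:\cat{I}\to\cat{CMet}$ with colimit $X=\colim_i D_i$, I have $(\colim_i D_i)^n = \colim_i (D_i^n)$ by the first part, and this colimit over $\cat{I}$ commutes with the coequalizer of the $S_n$-action (a colimit over the one-object groupoid $BS_n$, or over the pair of parallel arrows presenting it), since colimits commute with colimits. Concretely, $\colim_i (D_i)_n = \colim_i \mathrm{coeq}(S_n \rightrightarrows D_i^n) = \mathrm{coeq}(S_n \rightrightarrows \colim_i D_i^n) = \mathrm{coeq}(S_n \rightrightarrows X^n) = X_n$, and chasing the structure maps shows this identification is the canonical comparison map, so $(-)_n$ preserves the colimit. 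One should check that $\cat{CMet}$ actually has the coequalizers in question and that they are computed as claimed — but for coequalizers of a finite group action this is precisely the quotient-by-a-finite-group construction whose completeness was verified in the excerpt for $X_n$ itself, and more generally such colimits exist in $\cat{CMet}$ by Lemma~\ref{colims_met_lem}.

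I expect the main obstacle to be purely bookkeeping rather than conceptual: making precise the ``diagonal preserves sifted colimits'' step so that it genuinely licenses passing $X\mapsto X^{\otimes n}$ through a sifted colimit, rather than only $X\mapsto X\otimes Y$ through it. The cleanest route is to recall the standard characterization that $\cat{I}$ is sifted iff $\cat{I}$ is nonempty and the diagonal $\cat{I}\to\cat{I}\times\cat{I}$ is final, which by induction gives finality of $\cat{I}\to\cat{I}^n$ for every $n\geq 1$; combined with the coordinatewise computation of colimits in $\cat{CMet}^n$ (again via Lemma~\ref{colims_met_lem}) and $n$-fold application of Proposition~\ref{tensor_cocont}, this yields $(\colim_i D_i)^{\otimes n}\cong\colim_{(i_1,\dots,i_n)} D_{i_1}\otimes\cdots\otimes D_{i_n}\cong\colim_i D_i^{\otimes n}$ with the comparison map being the canonical one. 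A secondary point worth a sentence is that one must use sifted colimits in the sense appropriate to the enriched/metric setting — but since we only ever feed in the specific sifted diagrams $\cat{N}^\op$ and $\cat{FinUnif}^\op$ (and more generally ordinary sifted categories) and $\cat{CMet}$ has these by Corollary~\ref{sifted_exist}, no enrichment subtleties actually intervene.
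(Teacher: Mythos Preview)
Your proposal is correct and follows essentially the same approach as the paper: reduce $(-)^S$ to $(-)^{\otimes|S|}$ via rescaling, use that $\otimes$ preserves colimits in each variable (Proposition~\ref{tensor_cocont}) together with finality of the diagonal $\cat{I}\to\cat{I}^{\times n}$ for sifted $\cat{I}$, and then handle $(-)_n$ by commuting the sifted colimit with the $S_n$-coequalizer. The paper's proof is just a terser version of exactly this argument.
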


\begin{proof}
Let $\cat{D}$ be a sifted category. Since $(-)^S$ is $(-)^{\otimes S}$ composed with a rescaling, it is enough to show that $(-)^{\otimes S}$ preserves $\cat{D}$-colimits. But since the monoidal product preserves colimits in each argument, $(-)^{\otimes S}$ turns a $\cat{D}$-colimit into a $\cat{D}^{\times S}$-colimit. But since the diagonal functor $\cat{D}\to\cat{D}^{\times S}$ is final by the siftedness assumption, the claim for $(-)^S$ follows. The claim for $(-)_n$ follows by commutation of colimits with colimits.
\end{proof}

$\cat{N}^\op$ is trivially sifted thanks to being directed. However, the category $\cat{FinUnif}^\op$ itself is not sifted: for example, the spans in $\cat{FinUnif}$
\[
\begin{tikzcd}[column sep=small,row sep=small]
	& S \idar{dl} \idar{dr} & & & & S \idar{dl} \ar{dr}{\alpha} \\
	S & & S & & S & & S
\end{tikzcd}
\]
are not connected by any zig-zag, for any $S\in\cat{FinUnif}$ with a non-identity automorphism $\alpha:S\to S$.

Similarly to the quotient maps $(X^m)^n\to (X_m)_n$ in~\eqref{doublequotient}, we have a commutative square
\begin{equation}\label{onlyonei}
\begin{tikzcd}
	(X_m)_n \ar{r}{(\iota_m)_n} \ar{d}{\iota_n} & (PX)_n \ar{d}{\iota_n} \\
	P(X_m) \ar{r}{P\iota_m} & PPX
\end{tikzcd}
\end{equation}
where now all maps are isometric embeddings. In the following, we use this composite as the map $(X_m)_n\to PX$.

\begin{prop}
$PPX$ is the colimit of both
\begin{enumerate}
\item the $(X_m)_n$ with the colimiting cocone given by the $\iota_n\circ (\iota_m)_n = P\iota_m \circ \iota_n$ for $m,n\in\cat{N}^\op$;
\item the subdiagram of this formed by the $(X_n)_n$ for $n\in\cat{N}^\op$.
\end{enumerate}
\label{PPuniv}
\end{prop}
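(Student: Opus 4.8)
The plan is to reduce both claims to the colimit characterization of $PX$ already established (Theorem~\ref{pcolimit} / Corollary~\ref{empdcolimit}), applied twice. For part (a), first observe that $PPX = P(PX)$, and that $PX$ is the colimit of the $X_m$ with cocone $\iota_m : X_m \to PX$ (Theorem~\ref{pcolimit}). Since the symmetrized power functors $(-)_n$ preserve sifted colimits (Lemma~\ref{powercocont}) and $\cat{N}^\op$ is directed, hence sifted, we get that $(PX)_n$ is the colimit of the $(X_m)_n$ over $m\in\cat{N}^\op$, with cocone $(\iota_m)_n$. Applying $P$ once more and again using Theorem~\ref{pcolimit} — now for the space $PX$ in place of $X$ — we have that $P(PX)$ is the colimit of the $(PX)_n$ over $n\in\cat{N}^\op$, with cocone $\iota_n : (PX)_n \to PPX$. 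Composing these two colimit presentations along the directed poset $\cat{N}^\op \times \cat{N}^\op$, and using that a colimit over a product of directed posets can be computed iteratively (or in one step over the product), we conclude that $PPX$ is the colimit of the $(X_m)_n$ with cocone the composite $\iota_n \circ (\iota_m)_n$; by the commuting square~\eqref{onlyonei} this composite equals $P\iota_m \circ \iota_n$, giving (a).

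For part (b), the point is that the diagonal inclusion $\cat{N}^\op \to \cat{N}^\op\times\cat{N}^\op$, $n \mapsto (n,n)$, is final (cofinal). Indeed, $\cat{N}^\op$ is directed and the diagonal of a directed poset into its square is final: given any $(m,n)$, choosing any common multiple $k$ of $m$ and $n$ yields $(k,k)$ with morphisms $(m,n)\to(k,k)$ in $\cat{N}^\op\times\cat{N}^\op$, and the relevant comma categories are directed, hence connected. Therefore the colimit over the subdiagram indexed by the diagonal — namely the $(X_n)_n$ with cocone $\iota_n\circ(\iota_n)_n = P\iota_n\circ\iota_n$ — agrees with the colimit over the full diagram, which is $PPX$ by part (a). This establishes (b).

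I expect the main obstacle to be purely bookkeeping rather than conceptual: one must be careful that the two applications of Theorem~\ref{pcolimit} genuinely compose, i.e.\ that the cocone one obtains after applying $(-)_n$ and then $P$ really is a colimiting cocone over the product poset and not merely a cocone. This is where Lemma~\ref{powercocont} (preservation of sifted colimits by $(-)_n$) is essential — without it, applying $(-)_n$ to the colimit $PX = \colim_m X_m$ would not yield $(PX)_n = \colim_m (X_m)_n$. The other delicate point is the verification that the diagonal $\cat{N}^\op\to(\cat{N}^\op)^{\times 2}$ is final; this is standard for directed posets, but should be stated explicitly since it is exactly what makes the passage from the ``full'' double diagram to the ``diagonal'' subdiagram legitimate, and it is the crux of part (b). Everything else is a matter of identifying the composite cocone maps via the commuting square~\eqref{onlyonei}, which is immediate.
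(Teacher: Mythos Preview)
Your proposal is correct and follows essentially the same route as the paper: for part (a) you invoke Lemma~\ref{powercocont} to commute $(-)_n$ past the colimit over $m$, then compute the iterated colimit over $\cat{N}^\op\times\cat{N}^\op$; for part (b) you use finality of the diagonal, which the paper phrases as ``$\cat{N}^\op$ is sifted'' (the two formulations are equivalent, directedness being a special case). The only cosmetic difference is that the paper's proof is terser and does not spell out the cofinality check for the diagonal, simply citing siftedness.
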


While measures on spaces of measures are often quite delicate to handle, this result gives a concrete way to work with them in terms of finite data only. Although we do not currently have any use for even higher powers of $P$, the analogous statement holds for any $P^n X$.

\begin{proof}
The second claim follows from the first since $\cat{N}^\op$ is sifted. For the first, the Lemma~\ref{powercocont} tells us that the $(\iota_m)_n : (X_m)_n\to (PX)_n$ form a colimiting cocone for each $n$; the claim then follows from the construction of a colimit over $\cat{N}^\op\times\cat{N}^\op$ by first taking the colimit over the first factor and then over the second.
\end{proof}

\begin{lemma}\label{Ecommutes}
For $X\in\cat{CMet}$, there is a unique morphism $E : PPX \to PX$ such that
\begin{equation}\label{Ecommuteseq}\begin{tikzcd}
	(X_m)_n \ar{r}{E_{m,n}} \ar{d} & X_{mn} \ar{d} \\
	PPX \ar{r}{E} & PX
\end{tikzcd}\end{equation}
commutes for all $m,n\in\cat{N}$.
\end{lemma}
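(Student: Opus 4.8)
The plan is to read off $E$ from the universal property of $PPX$ as a colimit, as supplied by Proposition~\ref{PPuniv}: $PPX$ is the colimit in $\cat{CMet}$ of the diagram $(m,n)\mapsto (X_m)_n$ over $\cat{N}^\op\times\cat{N}^\op$, with colimiting cocone the composites $(X_m)_n\to PPX$ of~\eqref{onlyonei}. It therefore suffices to produce a cocone $\{\phi_{m,n}:(X_m)_n\to PX\}$ under that diagram; the induced map $PPX\to PX$ is then automatically short (the colimit being taken in $\cat{CMet}$) and is the unique map whose composite with each colimit leg is $\phi_{m,n}$, since colimiting cocones are jointly epic. The candidate is $\phi_{m,n} := \iota_{mn}\circ E_{m,n}$, with $E_{m,n}:(X_m)_n\to X_{mn}$ the graded-monad multiplication of~\eqref{Edowndef} (Theorem~\ref{monoidalfunctorsym}) and $\iota_{mn}:X_{mn}\to PX$ the symmetrized empirical distribution map. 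With this choice the square~\eqref{Ecommuteseq} is precisely the assertion that $E$ composed with the colimit leg equals $\phi_{m,n}$, so the lemma follows once the $\phi_{m,n}$ are shown to form a cocone.

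To verify the cocone condition, I would use that every morphism of $\cat{N}^\op\times\cat{N}^\op$ is a composite of morphisms living in a single coordinate, so it is enough to treat the generators $(X_{m|mk})_n:(X_m)_n\to(X_{mk})_n$ and $(X_m)_{n|nl}:(X_m)_n\to(X_m)_{nl}$ for $k,l\in\cat{N}$. For the first, the required identity $\iota_{mkn}\circ E_{mk,n}\circ(X_{m|mk})_n = \iota_{mn}\circ E_{m,n}$ reduces, via $\iota_{mn} = \iota_{mkn}\circ X_{mn|mkn}$ (Lemma~\ref{imn}), to
\[
E_{mk,n}\circ(X_{m|mk})_n \;=\; X_{mn|mkn}\circ E_{m,n},
\]
which I would check on underlying multisets: both sides send a multiset of $n$ multisets of size $m$ to the $k$-fold repetition of its total union. (Alternatively, this identity can be deduced from the associativity coherence of Theorem~\ref{monoidalfunctorsym} after writing the repetition maps as composites of the $E_{-,-}$ with the symmetrized copy embeddings $\delta$.) The second generator is handled identically, with $(X_m)_{n|nl}$ and $X_{mn|mnl}$ replacing $(X_{m|mk})_n$ and $X_{mn|mkn}$.

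I do not anticipate a real obstacle: modulo Proposition~\ref{PPuniv} and Theorem~\ref{monoidalfunctorsym}, the argument is bookkeeping with multisets. The only points requiring a moment's care are that the colimit presentation of $PPX$ being used is the one in $\cat{CMet}$ (so that no separate check that $E$ is short is needed), and that the vertical maps appearing in~\eqref{Ecommuteseq} are exactly the colimit legs and the $\iota_{mn}$ — both of which are given by Proposition~\ref{PPuniv} and~\eqref{onlyonei}. For uniqueness one could even invoke only the diagonal subdiagram $(X_n)_n$ from the second part of Proposition~\ref{PPuniv}, but using the full diagram is more natural here since~\eqref{Ecommuteseq} is demanded for all $m,n$.
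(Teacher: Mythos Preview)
Your proposal is correct and follows essentially the same approach as the paper: invoke Proposition~\ref{PPuniv}, set $\phi_{m,n}=\iota_{mn}\circ E_{m,n}$, and verify the cocone condition on each coordinate separately using Lemma~\ref{imn}. The only cosmetic difference is that the paper justifies the identity $E_{mk,n}\circ(X_{m|mk})_n = X_{mn|mkn}\circ E_{m,n}$ (and its companion in the second variable) directly as naturality of the lax monoidal structure maps $E_{-,-}$ in each $\cat{N}$-argument, rather than checking it on multisets.
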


The map $E: PPX \to PX$ amounts to taking the \emph{expected distribution}. 

\begin{proof}
This amounts to showing that the $\iota_{mn}\circ E_{m,n}$ form a cocone to which the universal property of Proposition~\ref{PPuniv} applies. Since every morphism in $\cat{N}$ is a divisibility relation, this corresponds to commutativity of the two diagrams
\[
 \begin{tikzcd}[row sep=large]
  (X_m)_n \ar["E_{m,n}"']{d} \ar{rr}{(X_{m|\ell m})_n} && (X_{\ell m})_n \ar{d}{E_{\ell m,n}} && (X_m)_n \ar["E_{m,n}"']{d} \ar{rr}{(X_m)_{n|\ell n}} && (X_m)_{\ell n} \ar{d}{E_{m,\ell n}} \\
  X_{mn} \ar["\iota_{mn}"']{dr} \ar{rr}{X_{mn|\ell m n}} && X_{\ell mn} \ar["\iota_{\ell mn}"]{dl} && X_{mn} \ar{rr}{X_{mn|\ell m n}} \ar["\iota_{mn}"']{dr} && X_{\ell mn} \ar["\iota_{\ell mn}"]{dl} \\
  & PX &&&& PX		
 \end{tikzcd}
\]
for every $\ell\in\cat{N}$. The upper squares commute by naturality of $E$ in its two arguments in $\cat{N}$, and the triangles by Lemma~\ref{imn}.
\end{proof}

$E:PPX\to PX$ is natural in $X$ thanks to the uniqueness, i.e.~we have a natural transformation $E: PP \Rightarrow P$.

Let us show why the map $E$ is exactly the integration map taking the expected distribution. Denote for now by $\tilde{E}$ the usual integration map, i.e.~for all $\mu\in PPX$, let $\tilde{E}\mu\in PX$ be the measure mapping every Lipschitz function $f:X\to\R$ to
\begin{equation*}
 \int_X f \, d(\tilde{E}\mu) := \int_{PX} \left( \int_X f \, dp \right) d\mu(p) ,
\end{equation*}
$\tilde{E}$ is short because the map $p \mapsto \int_X f\, dp$ is. It furthermore makes diagram~\eqref{Ecommuteseq} commute, since for all $\{\{x_{11},\dots,x_{m1}\},\dots,\{x_{1n},\dots,x_{mn}\}\}$ in $(X^M)^N$, by linearity of the integral:
\begin{align*}
 &\int f \, d(\tilde{E}\circ \iota_{n}\circ(\iota_{m})_n \{\{x_{11},\dots,x_{m1}\},\dots,\{x_{1n},\dots,x_{mn}\}\}) \\ 
 &= f(x_{11}) + \dots + f(x_{m1}) + \dots + f(x_{1n}) + \dots + f(x_{mn}) \\
 &= \int f \, d(\iota_{mn}\circ E_{m,n}\{\{x_{11},\dots,x_{m1}\},\dots,\{x_{1n},\dots,x_{mn}\}\}) .
\end{align*}
Therefore, again by uniqueness, $\tilde{E}=E$.

\subsection{Monad axioms}
\label{monadax}

$E$ and $\delta$ satisfy the monad axioms. This can be proven using the universal property and the monoidal properties of the power functors described in~\ref{monpf}.

\begin{thm}\label{monadth}
 $(P,\delta,E)$ is a monad on $\cat{CMet}$. In other words, we have commutative diagrams:
 \begin{equation}\begin{tikzcd}
P \nat{r}{P \delta} \idar{dr} & PP \nat{d}{E} & P \ar[swap,Rightarrow]{l}{\delta P} \idar{dl} \\
& P
\end{tikzcd}\end{equation}
and:
\begin{equation}\begin{tikzcd}
PPP \nat{r}{P E} \nat{d}{E P} & PP \nat{d}{E} \\
PP \nat{r}{E} & P
\end{tikzcd}\end{equation}
\end{thm}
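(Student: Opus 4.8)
The plan is to deduce the monad axioms directly from the graded-monad structure on the symmetrized power functors established in Theorem~\ref{monoidalfunctorsym}, transporting each commuting diagram through the colimit characterization of $P$, $PP$, and $PPP$. The key principle is that by Corollary~\ref{empdcolimit} and Proposition~\ref{PPuniv} (and its evident iteration to $P^3$), each of these functors is a colimit of symmetrized powers, so a natural transformation out of $P^k$ is uniquely determined by its composites with the universal cocone maps $\iota_n\circ(\iota_m)_n\circ\cdots$. Thus to check that two parallel transformations $P^k \Rightarrow P$ agree, it suffices to check equality after precomposing with these colimit legs, which reduces everything to diagrams among the $X_{n_1,\dots,n_k}$ and the structure maps $E_{m,n}$ and $\delta$.

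First I would treat the unit axioms. For the left unit triangle $E\circ P\delta = \id_P$, I precompose with the colimit cocone $\iota_n : X_n \to PX$. Using naturality of $\delta$ and the defining square~\eqref{Ecommuteseq} for $E$, this composite reduces to $\iota_{n\cdot 1}\circ E_{n,1}\circ (\delta\text{ on the inner slot})$, and the unit triangle of Theorem~\ref{monoidalfunctorsym} (left diagram, identifying $X_n\cong (X_n)_1$) together with $X_{n\times 1}=X_n$ shows this equals $\iota_n$. Since the $\iota_n$ are jointly epic, $E\circ P\delta=\id$. The right unit axiom $E\circ \delta P = \id_P$ is handled symmetrically, now using the second unit triangle of Theorem~\ref{monoidalfunctorsym} (involving $\delta_m : X_m \to (X_1)_m$) and the identification $X_{1\times m}=X_m$; here one uses that $\delta P$ applied to $\iota_m(\{x_i\})$ lands, via the square analogous to~\eqref{onlyonei}, in the image of $(X_1)_m$ inside $PPX$.

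For the associativity axiom $E\circ PE = E\circ EP$ as transformations $PPP \Rightarrow P$, I precompose with the universal cocone $(X_\ell)_m)_n \to PPPX$ (the threefold analogue of Proposition~\ref{PPuniv}, which holds by the same iterated-colimit argument since $\cat{N}^\op$ is sifted and the power functors preserve sifted colimits by Lemma~\ref{powercocont}). Chasing $PE$ down: applying $E$ in the inner two slots uses~\eqref{Ecommuteseq} to replace the relevant leg by $X_{\ell m}$, then the outer $E$ gives $X_{\ell m \cdot n}$ via $E_{\ell m,n}$; so the left-hand composite becomes $\iota_{\ell m n}\circ E_{\ell m,n}\circ (E_{\ell,m})_n$. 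Chasing $EP$ similarly gives $\iota_{\ell m n}\circ E_{\ell,mn}\circ E_{m,n}$ (with $E_{m,n}$ applied in the two outer slots). These two agree precisely by the associativity square~\eqref{Edownassoc} of Theorem~\ref{monoidalfunctorsym}, so the two transformations agree on all cocone legs, hence are equal.

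The main obstacle I anticipate is bookkeeping rather than conceptual: one must be careful that the cocone legs of $PP$ and $PPP$ used here (the composites $P\iota_m\circ\iota_n$, and $P^2\iota_\ell\circ P\iota_m\circ\iota_n$) really are the colimiting ones and that naturality of $E$ in $X$ — which follows from its uniqueness characterization — lets one commute $E$ past $P$-images of morphisms as needed to identify, e.g., $PE$ restricted to a leg with the appropriate $E_{\ell m,n}$. A secondary subtlety is verifying that $PPP$ (equivalently $P^3$) is the colimit of the triply-symmetrized powers with the expected cocone; this is asserted in the remark after Proposition~\ref{PPuniv} and follows by applying Lemma~\ref{powercocont} and Proposition~\ref{PPuniv} one more layer out, but it should be invoked explicitly. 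Once these identifications are in place, each monad axiom is just a restatement of the corresponding diagram in Theorem~\ref{monoidalfunctorsym} plus joint epimorphy of the cocone, and naturality of $\delta$ and $E$ gives the naturality of all the transformations involved.
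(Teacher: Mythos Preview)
Your proposal is correct and follows essentially the same approach as the paper: both reduce each monad axiom to the corresponding diagram of the $\cat{N}^\op$-graded monad (Theorem~\ref{monoidalfunctorsym}) by precomposing with the jointly epic empirical-distribution cocones into $P$, $PP$, and $PPP$, the paper merely organizing this chase as commuting prisms and a cube. One bookkeeping slip to watch: for $E\circ P\delta$ the composite $P\delta\circ\iota_n$ lands in the image of $(X_1)_n$ (inner index $1$, outer index $n$), so the relevant structure map is $E_{1,n}$ and the \emph{second} unit triangle of Theorem~\ref{monoidalfunctorsym}, not $E_{n,1}$ and the first; your description has these swapped, which is precisely the kind of indexing hazard you already flag.
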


When equipped with this additional structure, we call the Kantorovich functor $P$ the \emph{Kantorovich monad}.

\begin{proof} We already know that $\delta$ and $E$ are natural. Hence we only need to check the commutativity at each object $X\in\cat{CMet}$. Because of the universal property of $P$, $E_n$, $E$ and $\iota$, we have the following.
\begin{enumerate}
 \item The left unit triangle at $X$ is the back face of the following prism:
    \begin{equation}\begin{tikzcd}[column sep=huge]
    & PX \ar{r}{\delta} \idar{dr} & PPX \ar{d}{E} \\
    X_m \ar{ur} \ar[pos=0.65]{r}{(X_m)_{1|n}\circ \delta} \ar[swap]{dr}{X_{m|mn}} & (X_m)_n \ar[crossing over]{ur} \ar{d}{E_{m,n}} & PX \\
    & X_{mn} \ar{ur}
    \end{tikzcd}\end{equation}
    Now: 
    \begin{itemize}
     \item The front face can be decomposed into the following diagram:
     \begin{equation}
      \begin{tikzcd}
       X_m \idar{dr} \ar{r}{\delta} & (X_m)_1 \ar{r}{(X_m)_{1|n}} \ar{d}{E_{m,1}} & (X_m)_n \ar{d}{E_{m,n}} \\
       & X_m \ar[swap]{r}{X_{m|mn}} & X_{mn}
      \end{tikzcd}
     \end{equation}
     which commutes by the left unit diagram of Theorem~\ref{monoidalfunctorsym}, together with naturality of $E_{m,-}$;
     \item The top face can be decomposed into the following diagram:
     \begin{equation}
      \begin{tikzcd}
       X_m \ar{d}{\iota_m} \ar{r}{\delta} & (X_m)_1 \ar{d}{(\iota_m)_1} \ar{r}{(X_m)_{1|n}} & (X_m)_n \ar{d}{(\iota_m)_n} \\
       PX \ar[swap]{r}{\delta} & (PX)_1 \ar[swap]{r}{(PX)_{1|n}} & (PX)_n \ar[swap]{r}{\iota_n} & PPX
      \end{tikzcd}
     \end{equation}
     which commutes by naturality of $\delta$ and $(-)_{1|n}$;
     \item The right face commutes by Lemma~\ref{Ecommutes};
     \item The left bottom face commutes by the naturality of the empirical distribution map.
    \end{itemize}
    The empirical distribution maps are not epic, but across \emph{all} $m,n$ they are \emph{jointly} epic, therefore the back face has to commute as well.
 \item The right unit triangle at $X$ is the back face of the following prism:
    \begin{equation}\begin{tikzcd}[column sep=huge]
    & PX \ar{r}{P\delta} \idar{dr} & PPX \ar{d}{E} \\
    X_n \ar{ur} \ar[pos=0.65]{r}{(X_{1|m}\circ\delta)_n} \ar[swap]{dr}{X_{m|mn}} & (X_m)_n \ar[crossing over]{ur} \ar{d}{E_{m,n}} & PX \\
    & X_{mn} \ar{ur}
    \end{tikzcd}\end{equation}
    Now: 
    \begin{itemize}
     \item The front face can be decomposed into the following diagram:
     \begin{equation}
      \begin{tikzcd}
       X_n \idar{dr} \ar{r}{(\delta)_n} & (X_1)_n \ar{r}{(X_{1|m})_n} \ar{d}{E_{1,n}} & (X_m)_n \ar{d}{E_{m,n}} \\
       & X_n \ar[swap]{r}{X_{n|mn}} & X_{mn}
      \end{tikzcd}
     \end{equation}
     which commutes by the right unit diagram of Theorem~\ref{monoidalfunctorsym}, together with naturality of $E_{-,n}$;
     \item The top face can be decomposed into the following diagram:
     \begin{equation}
      \begin{tikzcd}
       X_n \ar{d}{\iota_n} \ar{r}{(\delta)_n} & (X_1)_n \ar{d}{\iota_n} \ar{r}{(X_{1|m})_n} & (X_m)_n \ar{d}{\iota_n} \\
       PX \ar[swap]{r}{P\delta} & P(X_1) \ar[swap]{r}{P(X_{1|m})} & P(X_m) \ar[swap]{r}{P\iota_m} & PPX
      \end{tikzcd}
     \end{equation}
     which commutes by naturality of $\iota_n$;
     \item The right face commutes again by Lemma~\ref{Ecommutes};
     \item The left bottom face commutes again by the naturality of the empirical distribution map.
    \end{itemize}
    Again, the empirical distribution maps across all $m,n$ are jointly epic, therefore the back face has to commute as well.
 \item The associativity square at each $X$ is the back face of the following cube:
    \begin{equation}\begin{tikzcd}
    & PPPX \ar{rr}{PE} \ar[near start]{dd}{E} && PPX \ar{dd}{E} \\
    ((X_\ell)_m)_n \ar{dd}{E_{m,n}} \ar{ur} \ar[crossing over, near end, swap]{rr}{(E_{\ell, m})_n} && (X_{\ell m})_n \ar{ur}  \\
    & PPX \ar[near end]{rr}{E} && PX \\
    (X_\ell)_{mn} \ar{ur} \ar{rr}{E_{\ell, mn}} && X_{\ell m n} \ar{ur} \ar[leftarrow,crossing over, near end]{uu}{E_{\ell m, n}}
    \end{tikzcd}\end{equation}   
    where the map $((X_\ell)_m)_n \to PPPX$ is uniquely obtained in the same way as the map $((X^\ell)^m)^n \to ((X_\ell)_m)_n$ in the proof of Theorem~\ref{monoidalfunctorsym}, using naturality of $\iota$ instead of $q$.
    Now:
    \begin{itemize}
     \item The front face is just the associativity square of Theorem~\ref{monoidalfunctorsym};
     \item The top face can be decomposed into:
     \begin{equation}
      \begin{tikzcd}
       ((X_\ell)_m)_n \ar{d}{(E_{\ell, m})_n} \ar{r} & (PPX)_n \ar{r}{\iota_n} \ar{d}{(E)_n} & PPPX \ar{d}{PE}\\
       (X_{\ell m})_{n} \ar{r} & (PX)_n \ar{r}{\iota_n} & PPX
      \end{tikzcd}
     \end{equation}
     which commutes by Lemma~\ref{Ecommutes}, and by naturality of $\iota_n$;
     \item The left, right, and bottom faces commute by Lemma~\ref{Ecommutes}.
    \end{itemize}
    Once again, the empirical distribution maps across all $\ell,m,n$ are jointly epic, therefore the back face has to commute as well.
\end{enumerate}

It follows that $(P,\delta,E)$ is a monad. 
\end{proof}

In analogy with Theorem~\ref{FinUniftoNmonad}, we can now conclude that $P$ is the monad that one obtains by taking the $\cat{FinUnif}^\op$-graded monad $(=)^{(-)}$ or the $\cat{N}^\op$-graded monad $(=)_{(-)}$ and ``crushing them down'' universally to an ungraded monad:

\begin{thm}
\label{Paskanext}
As a lax monoidal functor, $P:\cat{1}\to[\cat{CMet},\cat{CMet}]$ is the Kan extension in $\cat{MonCat}$
\begin{enumerate}
\item of the strong monoidal functor $(=)^{(-)}:\cat{FinUnif}^\op\to[\cat{CMet},\cat{CMet}]$ along $!:\cat{FinUnif}^\op\to\cat{1}$, and 
\item of the lax monoidal functor $(=)_{(-)}:\cat{N}^\op\to[\cat{CMet},\cat{CMet}]$ along $!:\cat{N}^\op\to\cat{1}$, 
\end{enumerate}
with respect to the empirical distribution maps making up the universal transformation.
\end{thm}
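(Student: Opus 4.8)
The strategy is to establish part~(b) and then obtain part~(a) for free by composing Kan extensions. Since $!:\cat{FinUnif}^\op\to\cat{1}$ factors as $\cat{FinUnif}^\op\xrightarrow{|-|^\op}\cat{N}^\op\xrightarrow{!}\cat{1}$, and Theorem~\ref{FinUniftoNmonad} already exhibits $(=)_{(-)}$ as the $\cat{MonCat}$-left Kan extension of $(=)^{(-)}$ along $|-|^\op$ with universal transformation $q$, the pasting of Kan extensions in the bicategory $\cat{MonCat}$ turns part~(b) into part~(a); the universal transformation of the composite is the horizontal composite of $q$ and $\iota$, which by Lemma~\ref{imn} and the discussion following it is the empirical distribution cocone $\iota^S$.

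For part~(b), I would mimic the proof of Theorem~\ref{FinUniftoNmonad} and invoke the monoidal Kan extension criterion, Theorem~\ref{monoidalkanext}, for the base functor $!:\cat{N}^\op\to\cat{1}$. Three things have to be checked. First, the underlying ordinary Kan extension: a left Kan extension along the functor to the terminal category is just a colimit, and Corollary~\ref{empdcolimit} says that $P$ is the colimit of $(=)_{(-)}:\cat{N}^\op\to[\cat{CMet},\cat{CMet}]$ with colimiting cocone the symmetrized empirical distribution maps $\iota_n$. Second, $!:\cat{N}^\op\to\cat{1}$ is trivially strong monoidal and essentially surjective. Third, the monoidal compatibility conditions of Theorem~\ref{monoidalkanext}: that the universal transformation $\iota$ is monoidal with respect to the graded-monad structure of Theorem~\ref{monoidalfunctorsym} on the domain and a lax monoidal (that is, monad) structure on $P$, and that the ordinary Kan extension interacts correctly with the monoidal product of $[\cat{CMet},\cat{CMet}]$.

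Monoidality of $\iota$ has a unit clause and a multiplication clause. The unit clause is immediate, since the Dirac delta embedding was defined as $\iota_1\circ\delta$, which is exactly the compatibility of $\iota$ with the units on either side. The multiplication clause asks that $\iota$ intertwine the graded multiplications $E_{m,n}:(X_m)_n\to X_{mn}$ of Theorem~\ref{monoidalfunctorsym} with a multiplication on $P$, through the canonical maps $(X_m)_n\to PPX$ of~\eqref{onlyonei}; this is precisely the commuting square~\eqref{Ecommuteseq}, so the unique map $E$ produced by Lemma~\ref{Ecommutes} supplies the multiplication, and one checks, exactly as in Theorem~\ref{monadth}, that $(\delta,E)$ is the resulting monad. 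The compatibility of $\colim_{\cat{N}^\op}$ with the monoidal product amounts, at the binary level, to the assertion that $PPX$ is the colimit of the double symmetrized powers $(X_m)_n$ over $\cat{N}^\op\times\cat{N}^\op$---which is exactly Proposition~\ref{PPuniv}---and at higher levels to the analogous statement for $P^kX$ noted there. With all of this in place, Theorem~\ref{monoidalkanext} yields part~(b).

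The main obstacle is the last of these points. In Theorem~\ref{FinUniftoNmonad} the corresponding condition was disposed of by observing that the comparison transformation $q\otimes q$ is \emph{pointwise} an epimorphism, because the double quotient maps $(X^m)^n\to(X_m)_n$ are surjective. That route is closed here: the maps $(X_m)_n\to PPX$ are isometric embeddings with non-dense image and hence very far from epimorphisms, so the compatibility of the Kan extension with the monoidal product cannot be reduced to a pointwise statement and must genuinely be read off from the colimit description of $PP$ furnished by Proposition~\ref{PPuniv} (and the remark there about $P^k$). Once that input is granted, the rest is a matter of chasing the diagrams of Section~\ref{secmonad} through the universal properties already established.
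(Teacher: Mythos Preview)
Your argument is correct and follows the same route as the paper's: reduce (a) to (b) via Theorem~\ref{FinUniftoNmonad} and composition of Kan extensions, then verify the hypotheses of Theorem~\ref{monoidalkanext} using Corollary~\ref{empdcolimit} for the underlying Kan extension, \eqref{Ecommuteseq} for monoidality of $\iota$, and Proposition~\ref{PPuniv} for the epi hypothesis. Your worry in the last paragraph is well-placed but already resolved in the paper: it invokes precisely the \emph{joint} epicness of the maps $(X_m)_n\to PPX$ (not componentwise epicness), which is Proposition~\ref{PPuniv}; and since here $\cat{C'}=\cat{1}$, the two transformations to be compared after cancelling $\iota\otimes\iota$ are constant over $\cat{N}^\op\times\cat{N}^\op$, so joint epicness is exactly the condition needed---no appeal to higher $P^k$ is required.
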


Together with Corollary~\ref{empdcolimit} and Theorem~\ref{FinUniftoNmonad}, this means that we have a diagram
\[
 \begin{tikzcd}
\cat{FinUnif}^\op \ar[swap]{dd}{|-|} \ar{ddrr}{(=)^{(-)}} 
 \ar[bend right=10,""{name=PUP},phantom]{ddrr} \ar[bend right=50,""{name=DPDOWN,pos=0.32},phantom]{ddrr} 
  \ar[Rightarrow,from=PUP,to=DPDOWN,"q",swap] \\
\\
\cat{N} \ar[swap]{dd}{!} \ar{rr}{(=)_{(-)}} 
 \ar[bend right=10,""{name=PDOWN},phantom]{rr} \ar[bend right=90,""{name=P,pos=0.3},phantom]{rr} 
  \ar[Rightarrow,from=PDOWN,to=P,"\iota_{(-)}"]
 && \left[\cat{CMet},\cat{CMet}\right] \\
\, \\
\cat{1} \ar[swap]{uurr}{P}
\end{tikzcd}
\]
in which all 2-cells are Kan extensions, both in $\cat{Cat}$ and in $\cat{MonCat}$.

\begin{proof}
By composition of Kan extensions and Theorem~\ref{FinUniftoNmonad}, it is enough to prove the second item. In order to apply Theorem~\ref{monoidalkanext}, it remains to check two things: first, that the transformation $\iota_{(-)}:(=)_{(-)}\Rightarrow P$ is monoidal, which boils down to the diagram
\[\begin{tikzcd}
	(X_m)_n \ar{r}{E_{m,n}} \ar[swap]{d}{\iota_n\circ (\iota_m)_n} & X_{mn} \ar{d}{\iota_{mn}} \\
	PPX \ar{r}{E} & PX
\end{tikzcd}\]
which equals~\eqref{Ecommuteseq} again. Second, that $i\otimes i$ is an epimorphism in the functor category $[\cat{N}^\op\times\cat{N}^\op,[\cat{CMet},\cat{CMet}]]$, which follows from the fact that for every $X$, the maps $(X_m)_n\to PPX$ are jointly epic.
\end{proof}

The uniqueness statement in Theorem~\ref{monoidalkanext} also shows that the monad structure on $P$ is the only one which makes the empirical distribution maps into a morphism of graded monads.

\section{Algebras of the Kantorovich monad}
\label{Palgssec}

A $P$-algebra for the Kantorovich monad $P$ consists of $A\in \cat{CMet}$ together with a map $e:PA\to A$ such that the following diagrams commute:
    \[\begin{tikzcd}
    A \ar{r}{\delta} \idar{dr} & PA \ar{d}{e} & & & PPA \ar{d}{E} \ar{r}{Pe} & PA \ar{d}{e} \\
    & A & & & PA \ar{r}{e} & A
    \end{tikzcd}\]
The expectation value of a real- or vector-valued random variable is of central importance in probability theory. Indeed, as is generally the case in categorical probability, this is precisely what the map $e$ encodes: $P\R$ is the space of distributions of real-valued random variables with well-defined expectation value, and the operation of taking the expectation is a morphism $P\R\to\R$ which makes $\R$ into a $P$-algebra. Replacing $\R$ by other $P$-algebras results in a definition of integral or expectation for $A$-valued random variables $X\to A$ by composing the induced map $PX \to PA$ with the algebra map $PA \to A$. In this sense, the characterization of $P$-algebras of Theorem~\ref{Palgthm} together with our purely categorical construction of the Kantorovich monad $P$ via Theorem~\ref{Paskanext} below can be regarded as a definition of integral which does not require any measure theory.

A morphism of $P$-algebras $e_A : PA\to A$ and $e_B:PB\to B$ is a short map $f:A\to B$ such that
 \[\begin{tikzcd}
  PA \ar{d}{e_A} \ar{r}{Pf} & PB \ar{d}{e_B} \\
  A \ar{r}{f} & B
 \end{tikzcd}\]
commutes. We also say that $f$ is \emph{$P$-affine}. The Eilenberg-Moore category $\cat{CMet}^P$ is then the category of $P$-algebras and $P$-affine maps. Any Wasserstein space $PX$ is a free $P$-algebra, with structure map $e=E:PPX\to PX$. The Kleisli category $\cat{CMet}_P$ is the full subcategory of $\cat{CMet}^P$ on the free algebras. Its morphisms are the short maps $X\to PY$ for complete metric spaces $X$,$Y$, which correspond bijectively and naturally to $P$-affine maps $PX\to PY$, so that it naturally contains $\cat{CMet}$ as a subcategory.

As usual in categorical probability, the Kleisli morphisms should be thought of as \emph{stochastic maps} or \emph{Markov kernels}~\cite{giry}. An important difference between other approaches to categorical probability theory and the one developed by van Breugel~\cite{breugel} and in this work is that these stochastic maps are also required to be short. This leads to the unpleasant phenomenon that conditional expectations do not always exist: for given $p\in PX$ and $f:X\to Y$, it is generally not possible to write $p$ as the image of the pushforward $(Pf)(p)$ under a Kleisli morphism $PY\to PX$.

In this section, we will give equivalent characterizations of the $P$-algebras and their category. 
In the context of compact and of 1-bounded complete metric spaces, it seems to be known that the Kantorovich monad captures the operations of taking formal binary midpoints \cite[Section~4]{BHMW}. We develop similar ideas for all complete metric spaces.

By evaluating the structure map on a finitely supported measure, one assigns to every formal convex combination of points another point. In this way, a $P$-algebra looks like a convex set in which the convex structure interacts well with the metric. And indeed, we will show that the category of $P$-algebras is equivalent to the category of closed convex subsets of Banach spaces with short affine maps. A similar characterization of the category of algebras of the Radon monad exists, as the category of compact convex sets in locally convex spaces~\cite{swirszcz}; see also~\cite{keimel} for a more recent exposition. A similarly simple characterization of the algebras of the Giry monad is apparently not known; even for the Giry monad on the category of Polish spaces with continuous maps, the existing characterization is significantly more complicated~\cite{doberkat}.

\subsection{Convex spaces}
\label{secconvspaces}

A set together with an abstract notion of convex combination which satisfies the same equations as convex combinations in a vector space is a \emph{convex space}. This is a notion which has been discovered many times over in various forms, e.g.~in~\cites{stone,gudder,swirszcz}. A convex space can be defined as an algebra of the convex combinations monad on $\cat{Set}$. This monad assigns to every set $M$ the set of finitely supported probability measures on $M$, where the unit is again given by the Dirac delta embedding and the multiplication by the formation of the expected measure,
\[
	\sum_i \alpha_i\, \delta\left(\sum_j \beta_{ij} \delta(x_{ij})\right) \longmapsto \sum_{i,j} \alpha_i \beta_{ij} \delta({x_{ij}})
\]
Equivalently, a convex space is a model of the Lawvere theory opposite to the category of stochastic matrices, $\cat{FinStoch}$~\cite{fritz}. An axiomatization in terms of binary operations is as follows:

\begin{deph}
\label{csdef}
A \emph{convex space} is a set $A$ equipped with a family of binary operations $c:[0,1]\times A\times A\to A$, such that the following properties hold for all $x,y,z\in A$ and $\lambda,\mu\in [0,1]$:
 \begin{enumerate}
  \item\label{csunit} Unitality: $c_{0}(x,y) = x$;
  \item\label{csidempo} Idempotency: $c_\lambda(x,x) = x$;
  \item Parametric commutativity: $c_\lambda(x,y) = c_{1-\lambda}(y,x)$;
  \item Parametric associativity: $c_\lambda(c_\mu(x,y),z) = c_{\lambda\mu}(x,c_\nu(y,z))$, where:
  \begin{equation}
  \label{csassoc}
   \nu = \begin{cases}
         \frac{\lambda(1-\mu)}{1-\lambda\mu} & \mbox{ if } \lambda,\mu\ne 1;\\
         \mbox{any number in } [0,1] & \mbox{ if } \lambda=\mu=1.
         \end{cases}
  \end{equation}
 \end{enumerate}
The category of convex spaces has as morphisms the maps $f:A\to B$ for which
 \begin{equation}
  \begin{tikzcd}
   A\times A \ar{d}{c_\lambda} \ar{r}{f\times f} & B \times B \ar{d}{c_\lambda} \\
   A \ar{r}{f} & B
  \end{tikzcd}
 \end{equation}
commutes for every $\lambda\in[0,1]$.
\end{deph}
  
In the following, we will make use of the standard equivalence between this definition of convex spaces and algebras of the convex combinations monad $C:\cat{Set}\to\cat{Set}$.

\subsection{Equivalent characterizations of $P$-algebras}
\label{convalg}

\begin{thm}
\label{Palgthm}
The following structures are equivalent on a complete metric space $A$, in the sense that there is an isomorphism of categories over $\cat{CMet}$:
 \begin{enumerate}
  \item\label{Palg} A $P$-algebra structure;
  \item\label{sympoweralg} A short map $e_n:A_n\to A$ for each $n\in\cat{N}$, such that $e_1 = \delta^{-1}$, and such that the diagrams 
  \begin{equation}
   \begin{tikzcd}
   \label{Nalg}
    A_m \ar[swap]{dr}{e_m} \ar{rr}{A_{m|mn}} && A_{mn} \ar{dl}{e_{mn}} \\
    & A
   \end{tikzcd} \qquad
   \begin{tikzcd}
    (A_m)_n \ar{d}{E_{m,n}} \ar{r}{(e_m)_n} & A_n \ar{d}{e_n} \\
    A_{mn} \ar{r}{e_{mn}} & A
   \end{tikzcd}
  \end{equation}
  commute. Structure-preserving maps are those $f:A\to B$ for which the diagrams
  \begin{equation}
   \label{Nalgmorph}
   \begin{tikzcd}
    A_n \ar{d}{e_n} \ar{r}{f_n} & B_n \ar{d}{e_n} \\
    A \ar{r}{f} & B
   \end{tikzcd}
  \end{equation}
  commute for all $n\in\cat{N}$.
  \item\label{poweralg} A short map $e^S:A^S\to A$ for each $S\in\cat{FinUnif}$, such that $e^1 = \delta^{-1}$, and such that the diagrams 
  \begin{equation}
   \begin{tikzcd}
   \label{FinUnifalg}
    A^T \ar[swap]{dr}{e^T} \ar{rr}{A^\phi} && A^S \ar{dl}{e^S} \\
    & A
   \end{tikzcd} \qquad
   \begin{tikzcd}
    (A^S)^T \ar{d}{E^{S,T}} \ar{r}{(e^S)^T} & A^T \ar{d}{e^T} \\
    A^{S\times T} \ar{r}{e^{S\times T}} & A
   \end{tikzcd}
  \end{equation}
  commute for every $S,T\in\cat{FinUnif}$ and $\phi\in\cat{FinUnif}(S,T)$. Structure-preserving maps are those $f:A\to B$ for which the diagrams
  \begin{equation}
   \label{FinUnifalgmorph}
   \begin{tikzcd}
    A^S \ar{d}{e^S} \ar{r}{f^S} & B^S \ar{d}{e^S} \\
    A \ar{r}{f} & B
   \end{tikzcd}
  \end{equation}
  commute for all $S\in\cat{FinUnif}$.
  \item\label{csmet} A structure of convex space satisfying a compatibility inequality with the metric,
\begin{equation}
\label{metcompat}
   d\big( c_\lambda(x, z), c_\lambda(y, z) \big) \le \lambda \, d(x,y),
\end{equation}
where the morphisms are the short maps that are also morphisms of convex spaces.
 \end{enumerate}
\end{thm}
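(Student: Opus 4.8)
The plan is to prove all four equivalences as \emph{isomorphisms} of categories over $\cat{CMet}$ by exhibiting, in each direction, translations that leave the underlying complete metric space and the underlying maps unchanged; bijectivity on objects and on morphisms then yields the isomorphisms for free, with functoriality automatic. I would organize the cycle as (a) $\Leftrightarrow$ (b), (b) $\Leftrightarrow$ (c), and (b) $\Leftrightarrow$ (d).

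\emph{Step 1: (a) $\Leftrightarrow$ (b).} From a $P$-algebra $e\colon PA\to A$, set $e_n := e\circ\iota_n\colon A_n\to A$. The unit axiom $e\circ\delta=\id_A$ gives $e_1=\delta^{-1}$; the first diagram in~\eqref{Nalg} is Lemma~\ref{imn} composed with $e$; and the second follows from the associativity axiom $e\circ E = e\circ Pe$ together with the squares~\eqref{Ecommuteseq}, \eqref{onlyonei} and naturality of $\iota$. Conversely, a cocone $\{e_n\}$ as in (b) induces, via the colimit presentation $PX=\colim_n X_n$ with universal cocone $\iota_n$ (Theorem~\ref{pcolimit}, Corollary~\ref{empdcolimit}), a unique $e\colon PA\to A$ with $e\circ\iota_n=e_n$; the unit axiom is immediate, and the associativity axiom is verified after precomposition with the jointly epic family $\iota_n\circ(\iota_m)_n\colon(A_m)_n\to PPA$ of Proposition~\ref{PPuniv}, where~\eqref{Ecommuteseq} turns the left composite into $e_{mn}\circ E_{m,n}$ while~\eqref{onlyonei} plus naturality of $\iota$ turns the right one into $e_n\circ(e_m)_n$, so that the second diagram of~\eqref{Nalg} is exactly what is needed. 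The morphism conditions~\eqref{Nalgmorph} match $P$-affineness by the same naturality-and-joint-epimorphism argument. The equivalence (a) $\Leftrightarrow$ (c) is entirely analogous, using the $\cat{FinUnif}^\op$-indexed colimit, and (b) $\Leftrightarrow$ (c) can alternatively be seen directly: $e^S := e_{|S|}\circ q_S$, and conversely $e_n$ is the factorization of $e^S$ through $q_S$ (which exists by the $A^\sigma$-invariance forced by~\eqref{FinUnifalg}), using that $q_S$ is epic, diagram~\eqref{imnquot}, and that $|-|\colon\cat{FinUnif}\to\cat{N}$ is full and essentially surjective (Lemma~\ref{kanext}).

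\emph{Step 2: (b) $\Leftrightarrow$ (d).} This is where the real work lies. Given (b), define for a rational $\lambda=k/n\in[0,1]$ the operation $c_\lambda(x,y):=e_n(\{x,\dots,x,y,\dots,y\})$, the argument being the multiset with $k$ copies of $x$ and $n-k$ copies of $y$; independence of the representative $k/n$ is the repetition invariance in the first diagram of~\eqref{Nalg}. Shortness of $e_n$ together with the explicit quotient metric~\eqref{discopt} gives both $d(c_\lambda(x,y),c_\mu(x,y))\le|\lambda-\mu|\,d(x,y)$ and the metric-compatibility inequality~\eqref{metcompat} for rational parameters; the first shows $\lambda\mapsto c_\lambda(x,y)$ is Lipschitz on the rationals, hence extends uniquely to $[0,1]$ since $A$ is complete, and~\eqref{metcompat} then holds for all $\lambda$ by continuity. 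Unitality, idempotency and parametric commutativity hold for rational parameters directly from $e_n\circ\delta_n=\id$ and the permutation-invariance of $A_n$; parametric associativity for rational $\lambda,\mu$ is the second diagram of~\eqref{Nalg} read on a multiset of multisets (the case $\lambda=\mu=1$ of~\eqref{csassoc} being trivial); and all axioms pass to arbitrary real parameters by joint continuity in the points (from~\eqref{metcompat} and commutativity) and continuity in $\lambda$. Conversely, given (d), the standard calculus of finite convex combinations in a convex space lets us set $e_n(\{x_1,\dots,x_n\}):=\tfrac1n(x_1+\dots+x_n)$; this is well-defined on multisets, is short as a map $A_n\to A$ by an induction on $n$ using~\eqref{metcompat} and commutativity, satisfies $e_1=\delta^{-1}$ by unitality, the first diagram of~\eqref{Nalg} by idempotency, and the second by the distributivity law of the convex-combinations monad specialized to uniform weights. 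Morphisms match up both ways since a short map is a convex-space morphism if and only if it commutes with all uniform finite convex combinations, passing between rational and real $\lambda$ by continuity.

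\emph{Main obstacle.} The delicate point is the rational-to-real passage in Step 2: one must check carefully that the Lipschitz-in-$\lambda$ estimate genuinely comes out of~\eqref{discopt}, that the completion is well-posed, and above all that \emph{every} convex space axiom — in particular parametric associativity with its case split on $\nu$, as well as~\eqref{metcompat} itself — survives taking limits. A second, more bookkeeping-heavy point is the identification of the graded-algebra laws~\eqref{Nalg} with the equational theory of convex spaces, i.e.\ the fact that uniform finite convex combinations both generate and present that theory; this is precisely where one invokes the equivalence between Definition~\ref{csdef} and algebras of the convex-combinations monad.
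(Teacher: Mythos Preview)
Your proposal is correct and workable, but its route through (b)$\Leftrightarrow$(d) differs noticeably from the paper's. The paper also establishes (a)$\Leftrightarrow$(b)$\Leftrightarrow$(c) via the colimit presentation and the quotient maps, just as you do, but for the link to (d) it avoids your rational-to-real extension entirely: it goes \ref{Palg}$\Rightarrow$\ref{csmet} using the morphism of monads $CU\Rightarrow UP$ (so real-weighted finite convex combinations are already elements of $PA$, and the convex-space axioms come for free from those of $C$), and \ref{csmet}$\Rightarrow$\ref{poweralg} by viewing a convex space as a model of the Lawvere theory $\cat{FinStoch}^{\op}$ and reading off the uniform-distribution maps $u_S\colon 1\to S$. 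Closing the loop, the paper still needs one density argument, namely that two convex-space structures on a complete metric space satisfying~\eqref{metcompat} and agreeing on rational weights coincide; this is exactly the continuity lemma you use to extend $c_\lambda$.

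The trade-offs: the paper's detour through (a) and $\cat{FinStoch}$ is conceptually cleaner---no hand verification of parametric associativity for rational $\lambda,\mu$, no explicit extension step---at the price of invoking the monad morphism and the Lawvere-theory description. Your approach is more elementary and stays inside the graded-algebra language of (b), but you then carry the burden of checking that the two diagrams~\eqref{Nalg} really do encode associativity of rational convex combinations (they do: pad with idempotents so all inner multisets have equal size, apply the square, then contract via the triangle), and that the round-trip (b)$\to$(d)$\to$(b) returns the original $e_n$ (an induction of the same flavour). Your identification of the rational-to-real passage as the main obstacle is accurate; once that Lipschitz-in-$\lambda$ estimate is in hand, the rest is routine.
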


We make two remarks on related literature. First, in the special case of complete separable metric spaces,~\cite[Theorem~10.9]{mpp}\footnote{Modulo the earlier footnote \ref{mpp_error}.} can also be interpreted as establishing the equivalence between~\ref{Palg} and~\ref{csmet}. Second, the structures in~\ref{poweralg} and~\ref{sympoweralg} are different from the \emph{graded algebras} in the sense of~\cite[Definition~1]{ssm}: for a graded algebra, the algebra morphisms would have to be of type $(A_m)_n\to A_{mn}$ and $(A^S)^T\to A^{S\times T}$, respectively.

It follows from Theorem~\ref{algban} that in structures of type~\ref{csmet}, the inequality~\eqref{metcompat} necessarily holds with equality.

\begin{proof}
We first show that the structures of type~\ref{Palg}, \ref{poweralg} and~\ref{sympoweralg} are equivalent, using the universal property proven in~\ref{uniprop}.

\begin{itemize}
\item \ref{sympoweralg}$\Leftrightarrow$\ref{poweralg}: By composing with the quotient maps $A^S\to A_{|S|}$, the $(e_n)_{n\in\cat{N}}$ determine morphisms $e^S : A^S\to A$, and conversely by the universal property. The equivalence between the triangles in~\eqref{Nalg} and~\eqref{FinUnifalg} follows from $e_{|S|} = e^S \circ q$ and the diagram~\eqref{imnquot}. For the same reason, \eqref{Nalgmorph} and~\eqref{FinUnifalgmorph} are equivalent.

It remains to verify the equivalence of the squares in~\eqref{Nalg} and~\eqref{FinUnifalg}. This follows by a cube similar to~\eqref{symcube},
\[
\begin{tikzcd}[row sep=large,column sep=large]
        & (A_m)_n \ar{rr}{(e_m)_n} \ar["E_{m,n}" near start]{dd} & & A_n \ar{dd}{e_n} \\
	(A^m)^n \ar[crossing over,"(e^m)^n" near start]{rr} \ar{dd}{E^{m,n}} \ar{ur} & & A^n \ar{ur} \\
        & A_{mn} \ar["e_{mn}" near end]{rr}  & & A \\
	A^{mn} \ar{rr}{e^{mn}} \ar{ur} & & A \idar{ur} \ar[leftarrow, swap, crossing over,"e^n"' near end]{uu}
\end{tikzcd}
\]
where the front face commutes if and only if the back face commutes, since all other faces commute, the quotient map on the upper left is epic, and the identity on the lower right is monic.
\item \ref{Palg}$\Leftrightarrow$\ref{sympoweralg}: This works similarly. By the universal property of $PA$, the cocone defined by the first diagram in~\eqref{Nalg} is equivalent to a short map $e:PA\to A$. The equivalence between the square in~\eqref{Nalg} and the multiplication square of a $P$-algebra follow by considering the cube
    \begin{equation}\begin{tikzcd}
    & PPA \ar{rr}{Pe} \ar[near start]{dd}{E} && PA \ar{dd}{e} \\
    (A_m)_n \ar{dd}{E_{m,n}} \ar{ur} \ar[crossing over, near end, swap]{rr}{(e_{m})_n} && A_n \ar{ur}  \\
    & PA \ar[near end]{rr}{e} && A \\
    A_{mn} \ar{ur} \ar{rr}{e_{mn}} && A \idar{ur} \ar[leftarrow,crossing over, near end]{uu}{e_n}
    \end{tikzcd}\end{equation} 
and using that the upper left diagonals are jointly epic as $m$ and $n$ vary.
 \item \ref{Palg}$\Rightarrow$\ref{csmet}: Finite convex combinations with real coefficients are a special case of Radon measures, and therefore every $P$-algebra $e:PA\to A$ also is a convex space in a natural way. Technically, this is based on the morphism of monads
\[\begin{tikzcd}
	\cat{CMet} \ar{d}{U} \ar{r}{P} & \cat{CMet} \ar{d}{U} \\
	\cat{Set} \nat{ur}{\eta} \ar{r}{C} & \cat{Set}
\end{tikzcd}\]
where $U$ is the forgetful functor, and $\eta$ is the natural transformation with $\eta:CUX\to UPX$ given by the map which reinterprets a finitely supported measure on $UX$ as a finitely supported measure on $X$, considered as an element of the underlying set of $PX$. It is straightforward to check that this is a morphism of monads. Thus we have a functor from $P$-algebras in $\cat{CMet}$ to $C$-algebras in $\cat{Set}$. In other words, every $P$-algebra is a convex space in a canonical way.
 
 Let us now check compatibility with the metric. Since $e$ is short, we get
 \begin{align*}
  d\big( c_\lambda(x& ,z), c_\lambda(y,z) \big) \\
  &= d\left(e(\lambda\delta(x) + (1-\lambda)\delta(z)), e(\lambda\delta(y) + (1-\lambda)\delta(z)) \right) \\
  &\le d\left(\lambda\delta(x) + (1-\lambda)\delta(z), \lambda\delta(y) + (1-\lambda)\delta(z) \right) \\
  &\stackrel{\textrm{Lemma~\ref{dcontract}}}{=} \lambda\, d(\delta(x),\delta(y)) = \lambda\, d(x,y).
 \end{align*}
 \item \ref{csmet}$\Rightarrow$\ref{poweralg}: Intuitively, the $e^S$ correspond to taking convex combinations with equal weights, and commutativity of~\eqref{FinUnifalg} follow from the equations satisfied by taking convex combinations in any convex space. To make this precise, it is most convenient to consider a convex space as a model of the Lawvere theory $\cat{FinStoch}^\op$. Considering $\cat{FinUnif}$ as a subcategory $\cat{FinUnif}\subseteq\cat{FinSet}\subseteq\cat{FinStoch}$, defining maps $u_S:1\to S$ in $\cat{FinStoch}$ which select the uniform distribution on each finite set $S$ results in commutativity of the two diagrams
\[\begin{tikzcd}
	& 1 \ar["u_S"']{ld} \ar{rd}{u_T} \\
	S \ar{rr}{\phi} & & T
\end{tikzcd}\qquad
\begin{tikzcd}
	1 \ar{r}{u_{S\times T}} \ar{d}{u_T} & S\times T \idar{d} \\
	T \ar{r}{u_S \times T} & S \times T
\end{tikzcd}\]
for every $S,T\in\cat{FinUnif}$ and $\phi\in\cat{FinUnif}(S,T)$. Thus, given a convex space $A$ as a model of $\cat{FinStoch}^\op$, the $u_S$ become maps $e^S:A^S\to A$ satisfying the required equations, and every affine map between convex spaces will make~\eqref{FinUnifalgmorph} commute. To see that the $e^S$ are short apply~\eqref{metcompat} twice, to get
\[
   d\big( c_\lambda(x, z), c_\lambda(y, w) \big) \le \lambda \, d(x,y) + (1-\lambda)\, d(z,w),
\]
which generalizes to
\begin{equation}
\label{metcompatgeneral}
	d\left(e\left(\sum_i \lambda_i \delta(x_i)\right), e\left(\sum_i \lambda_i \delta(y_i) \right) \right) \le \sum_i \lambda_i d(x_i,y_i)
\end{equation}
by decomposing a general convex combination into a sequence of binary ones and by induction. Shortness of $e^S$ is the special case where the $\lambda_i$'s are uniform and equal to $1 / |S|$.
\end{itemize}
It is clear that, starting with a $P$-algebra $A$ and applying the constructions \ref{Palg}$\Rightarrow$\ref{csmet}$\Rightarrow$\ref{poweralg}, one recovers the underlying \ref{poweralg}-structure of $A$. To see that the composite functor \ref{csmet}$\Rightarrow$\ref{sympoweralg}$\Leftrightarrow$\ref{Palg}$\Rightarrow$\ref{csmet} is the identity as well, we claim that two convex space structures $c$ and $c'$ which satisfy the metric compatibility inequality and coincide for convex combinations with rational weights must be equal. Indeed, we prove $d(c_\lambda(x,y),c'_\lambda(x,y)) = 0$ for all $\lambda\in(0,1)$: first, as $\lambda$ varies, this distance is bounded because $d(c_\lambda(x,y),y) = d(c_\lambda(x,y),c_\lambda(y,y)) \leq \lambda d(x,y) \le d(x,y)$, and similarly for $c'$, so that we get an upper bound of $2 d(x,y)$,
\[
	d(c_\mu(x,y),c'_\mu(x,y)) \leq 2 d(x,y) \quad \forall\mu\in[0,1].
\]
We choose a sufficiently small rational $\varepsilon > 0$, as well as a rational $\nu\in\left(\tfrac{\lambda - \varepsilon}{1 - \varepsilon},\tfrac{\lambda}{1 - \varepsilon}\right)$, and put $z:=c_\nu(x,y) = c'_\nu(x,y)$. Then
\[
	c_\lambda(x,y) = c_\varepsilon(c_\mu(x,y),z), \qquad c'_\lambda(x,y) = c'_\varepsilon(c'_\mu(x,y),z),
\]
where $\mu:=\tfrac{\lambda - (1-\varepsilon)\nu}{\varepsilon}$ is in $[0,1]$ due to the assumed bounds on $\nu$. Now since $\varepsilon$ is rational, we can bound the distance between these two points by
\begin{align*}
	d(c_\lambda(x,y),c'_\lambda(x,y)) & = d(c_\varepsilon(c_\mu(x,y),z),c_\varepsilon(c'_\mu(x,y),z)) \\
	 & \leq \varepsilon\, d(c_\mu(x,y),c'_\mu(x,y)) \leq 2\varepsilon\, d(x,y),
\end{align*}
from which the claim follows as $\varepsilon \to 0$.
\end{proof}

We suspect that $P$-algebras also coincide with the \emph{metric mean-value algebras} of~\cite[Definition~6]{BHMW}, if the requirement of $1$-boundedness is dropped.

\subsection{Algebras as closed convex subsets of Banach spaces}
\label{banach}

If $(E,\|\cdot\|)$ is a Banach space and $A\subseteq E$ is a closed convex subset, then $A$ is a convex space which carries the metric
\[
	d(x,y) := \|x-y\|
\]
with respect to which it is complete. These two structures fulfill the metric compatibility inequality~\eqref{metcompat},
\[
	\|(\lambda x + (1-\lambda)z) - (\lambda y + (1-\lambda)z)\| = \|\lambda x - \lambda y\| = \lambda \|x - y\|,
\]
which even holds with equality. Therefore, by Theorem~\ref{convalg}\ref{csmet}, $A$ is a $P$-algebra $e:PA\to A$ in a canonical way. In particular, we can \emph{define} the expectation value $\int_A x \, dp(x)$ of any $p\in PA$ (which has finite first moment) to be $e(p)$. By functoriality of $P$, this also defines the expectation value of any Banach-space valued random variable with finite first moment on any (other) complete metric space.

Let $\cat{ConvBan}$ be the category whose objects are closed convex subsets of Banach spaces $A\subseteq E$, and whose morphisms $f:(A\subseteq E)\to (B\subseteq F)$ are the short affine maps $f:A\to B$.\footnote{One might be tempted to define morphisms to be equivalence classes of short affine maps $f:E\to F$ which satisfy $f(A)\subseteq B$, where two such maps are identified whenever they are equal on $A$. This is not equivalent, since a short affine map $A\to F$ can in general not be extended to a short (or even merely continuous) affine map $E\to F$.} We have a canonical functor $\cat{ConvBan}\to\cat{CMet}^P$ which is fully faithful.

Moreover, it was shown in~\cite{capraro} that this functor is essentially surjective, meaning that every $P$-algebra in the form~\ref{csmet} is isomorphic both as a convex space and as a metric space (or equivalently as a $P$-algebra) to a closed convex subset of a Banach space. We therefore conclude that $P$-algebras and closed convex subsets of Banach spaces are the same concept:

\begin{thm}\label{algban}
 The functor $\cat{CBan}\to\cat{CMet}^P$ is an equivalence of categories.
\end{thm}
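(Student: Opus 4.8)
The plan is to use the standard fact that a functor is an equivalence of categories precisely when it is fully faithful and essentially surjective on objects. Full faithfulness of the canonical functor $\cat{ConvBan}\to\cat{CMet}^P$ has already been recorded above, so the only remaining task is essential surjectivity. Concretely, under the identifications of Theorem~\ref{Palgthm}\ref{csmet} a $P$-affine short map between two $P$-algebras is exactly a short map that is affine for the underlying convex structures, which for closed convex subsets of Banach spaces is precisely the class of morphisms defining $\cat{ConvBan}$; thus the functor is a bijection on hom-sets and it suffices to hit every $P$-algebra up to isomorphism.

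For essential surjectivity I would argue as follows. Let $(A,e)$ be an arbitrary $P$-algebra. By the equivalence \ref{Palg}$\Leftrightarrow$\ref{csmet} of Theorem~\ref{Palgthm}, the complete metric space $A$ carries a convex space structure satisfying the compatibility inequality~\eqref{metcompat}. I would then invoke the embedding theorem of~\cite{capraro}: any complete metric convex space satisfying~\eqref{metcompat} admits an isomorphism $\Phi\colon A\to B$ onto a closed convex subset $B\subseteq E$ of a Banach space $E$ which is simultaneously an isometry and an isomorphism of convex spaces. Since Theorem~\ref{Palgthm} exhibits the passage from a compatible convex metric space to a $P$-algebra as part of an isomorphism of categories over $\cat{CMet}$ — in particular, the $P$-algebra structure is recovered functorially from the convex-plus-metric data — the map $\Phi$ is automatically an isomorphism of $P$-algebras between $(A,e)$ and the $P$-algebra attached to $B\subseteq E$ as in the discussion preceding the theorem. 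Hence $(A,e)$ lies in the essential image, and the functor is an equivalence. As a by-product, since the compatibility inequality holds with equality inside any Banach space, one recovers the assertion of the remark following Theorem~\ref{Palgthm} that~\eqref{metcompat} is automatically an equality.

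The only substantial input is the embedding theorem of~\cite{capraro}, and this is where all the real work sits; everything else is a formal consequence of Theorem~\ref{Palgthm}. Were one to want a self-contained argument, the natural route would be to realize $E$ as the completion of the free real vector space on the points of $A$ modulo the affine relations coming from the convex structure, equipped with the quotient seminorm induced by $d$. The delicate points — that this seminorm is a genuine norm on the relevant quotient, that $A$ embeds into it isometrically and affinely, and, most crucially, that the image is \emph{closed} in the completion — are exactly what~\cite{capraro} establishes, which is why in practice one simply cites it rather than reproving it here.
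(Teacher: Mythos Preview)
Your proposal is correct and follows essentially the same approach as the paper: full faithfulness is taken as already established, and essential surjectivity is obtained by combining Theorem~\ref{Palgthm}\ref{csmet} with the embedding theorem of~\cite{capraro}. The paper's own argument is nothing more than this (the theorem is stated with the justification entirely contained in the preceding paragraph), so your write-up is, if anything, slightly more detailed in spelling out why the Capraro--Fritz isomorphism is automatically a $P$-algebra isomorphism.
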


\appendix

\section{Colimits of (complete) metric spaces}
\label{colims_met_app}

The main goal of this section is to prove that the monoidal structures of $\cat{Met}$ and $\cat{CMet}$ preserve colimits (Proposition~\ref{tensor_cocont}).

To this end, it is useful to consider $\cat{Met}_\infty$, by which we mean the category defined just as $\cat{Met}$ above, with the only difference that we now allow distances in $[0,\infty]$. Given $(X,d)\in\cat{Met}_\infty$, the ``having finite distance'' relation partitions the set $X$ into equivalence classes, each of which is a metric space with finite distances and therefore an object of $\cat{Met}$. The morphisms of $\cat{Met}_\infty$ must respect this equivalence relation. By this line of reasoning, it is straightforward to verify that $\cat{Met}_\infty$ is a free coproduct completion of $\cat{Met}$, with the inclusion functor $\cat{Met} \to \cat{Met}_\infty$ as the universal arrow. The advantage of $\cat{Met}_\infty$ over $\cat{Met}$ is that $\cat{Met}_\infty$ is cocomplete, with colimits of the following form. Let $U : \cat{Met}_\infty \to \cat{Set}$ be the underlying set functor and $F : \cat{C} \to \cat{Met}_\infty$ be a diagram indexed by a small category $\cat{C}$. Equip the set $\colim\: (U\circ F)$ with the largest pseudometric (with possibly infinite values) which makes the colimiting cocone components $\ell_c : U(F(c)) \to \colim\: (U\circ F)$ short; this pseudometric can be computed by starting with the distance function
\begin{equation}
\label{precolim}
	\hat{d}(x,y) := \inf \: \{\: d_{F(c)}(x',y') \: :\: c\in\cat{C}, \: \ell_c(x') = x, \: \ell_c(y') = y \:\},
\end{equation}
and then by shortest path optimization on $\hat{d}$ in order to enforce the triangle inequality, resulting in a pseudometric on $\colim\: (U\circ F)$ (with possibly infinite values). Identifying all pairs of points with distance zero results in an object of $\cat{Met}_\infty$, and it is not hard to see that this is the colimit of $F$ in $\cat{Met}_\infty$.

The very same statements apply to the category $\cat{CMet}_\infty$, defined as the full subcategory of $\cat{CMet}$ on the complete spaces, and its full subcategory $\cat{CMet}$. Since $\cat{CMet}_\infty \subseteq \cat{Met}_\infty$ is also a reflective subcategory of $\cat{Met}_\infty$ with the completion as the reflector, colimits in $\cat{CMet}_\infty$ can be computed as colimits in $\cat{Met}_\infty$ using the procedure above and then taking the completion.

\begin{lemma}
\label{colims_met_lem}
A functor $F : \cat{C} \to \cat{Met}$ (resp.~$F : \cat{C} \to \cat{CMet}$) for a small category $\cat{C}$ has a colimit if and only if the full subcategory on those objects $c\in\cat{C}$ with $F(c) \not\cong \emptyset$ is connected.
\end{lemma}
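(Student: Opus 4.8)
The plan is to deduce both directions from the explicit description of colimits in the cocomplete category $\cat{Met}_\infty$ (resp.\ $\cat{CMet}_\infty$) recalled above, transferring conclusions back along the full inclusion $\cat{Met}\hookrightarrow\cat{Met}_\infty$. First I would reduce to the subcategory appearing in the statement. Write $\cat{C}'\subseteq\cat{C}$ for the full subcategory on the objects $c$ with $F(c)\not\cong\emptyset$. Since $\emptyset$ is initial in $\cat{Met}$ and in $\cat{CMet}$, and since any morphism $d\to c$ with $F(c)=\emptyset$ forces $F(d)=\emptyset$, restriction along $\cat{C}'\hookrightarrow\cat{C}$ sets up a bijection between cocones under $F$ and cocones under $F|_{\cat{C}'}$: at an object with value $\emptyset$ the leg must be the unique map out of $\emptyset$, and naturality of such an extension is automatic. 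Hence $\colim F$ exists iff $\colim(F|_{\cat{C}'})$ does, and the two agree, so we may assume $F(c)\neq\emptyset$ for every $c\in\cat{C}$; the case $\cat{C}=\emptyset$ being trivial (with $\colim F=\emptyset$), we also assume $\cat{C}\neq\emptyset$, and must show that $\colim F$ exists iff $\cat{C}$ is connected.

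Suppose $\cat{C}$ is connected. Let $L:=\colim F$ computed in $\cat{Met}_\infty$ via \eqref{precolim}, with colimiting cocone $\ell_c:F(c)\to L$; every point of $L$ lies in some $\ell_c(F(c))$, and any two points of a single $\ell_c(F(c))$ are at finite distance in $L$ because $\ell_c$ is short and $F(c)$ has finite distances. For a morphism $f:c\to d$ in $\cat{C}$ one has $\ell_c=\ell_d\circ F(f)$, hence $\ell_c(F(c))\subseteq\ell_d(F(d))$, so $\ell_c(F(c))$ and $\ell_d(F(d))$ overlap whenever $c$ and $d$ are joined by a zig-zag in $\cat{C}$. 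Connectedness then lets us join any two points of $L$ by a finite chain of points in which consecutive ones lie in a common $\ell_c(F(c))$; summing finitely many finite distances shows that $d_L$ takes only finite values, so $L\in\cat{Met}$. Because $\cat{Met}\hookrightarrow\cat{Met}_\infty$ is full, $L$ is then also $\colim F$ in $\cat{Met}$; for the $\cat{CMet}$ statement one applies the completion reflector, which turns a space with finite distances into one with finite distances.

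Conversely, suppose $\cat{C}$ is nonempty but not connected; decompose $\cat{C}=\cat{C}_1\sqcup\cat{C}_2$ into nonempty full subcategories with no morphisms between them, and pick $c_i\in\cat{C}_i$ together with points $p_i\in F(c_i)$. If a colimit $L$ with legs $\ell_c$ existed in $\cat{Met}$ (the argument is identical for $\cat{CMet}$, since $\R\in\cat{CMet}$), then for every real $t>0$ the family of constant maps $F(c)\to\R$ equal to $0$ on $\cat{C}_1$ and to $t$ on $\cat{C}_2$ would be a cocone (naturality is automatic, as there are no morphisms across the two parts), inducing a short map $h:L\to\R$ with $h(\ell_{c_1}(p_1))=0$ and $h(\ell_{c_2}(p_2))=t$; shortness forces $t\le d_L(\ell_{c_1}(p_1),\ell_{c_2}(p_2))$, which is impossible since $t>0$ was arbitrary. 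Hence no colimit exists.

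I expect the only substantive point to be the zig-zag argument in the ``connected'' case: the observation that a connected indexing diagram has a colimit all of whose distances are finite, so that the $\cat{Met}_\infty$-colimit already lies in $\cat{Met}$ (and, after completing, in $\cat{CMet}$). The reduction to $\cat{C}'$ and the disconnected case are routine.
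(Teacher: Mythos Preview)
Your proof is correct and follows essentially the same approach as the paper: compute the colimit in $\cat{Met}_\infty$ and argue that connectedness of $\cat{C}'$ forces all distances to be finite, while disconnectedness yields a family of test cocones (the paper uses the two-point space $\{1,2\}$ with $d(1,2)=r$ rather than your constant maps into $\R$, but the idea is identical) witnessing that no finite-distance colimit can exist. Your treatment is slightly more detailed than the paper's---you spell out the zig-zag argument and the reduction to $\cat{C}'$ more explicitly---but there is no genuine difference in strategy.
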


Intuitively, the only obstruction to the existence of colimits is the lack of coproducts.

\begin{proof}
We only treat the case of $\cat{Met}$, since the case of $\cat{CMet}$ is completely analogous.

If $F : \cat{C} \to \cat{Met}$ satisfies the connectedness assumption, then the shortest path optimization for~\eqref{precolim} will clearly have a finite result, so that the resulting colimit in $\cat{Met}_\infty$ is actually an object in $\cat{Met}$, and is a colimit in there as well since $\cat{Met}\subseteq\cat{Met}_\infty$ is full.

Conversely, suppose that $F : \cat{C} \to \cat{Met}$ does not satisfy the connectedness assumption. By restricting $F$ to the full subcategory $\cat{C}'$ on those $c\in\cat{C}$ with $F(c)\neq\emptyset$, we obtain $F' : \cat{C}' \to \cat{Met}$ such that $F$-cocones and $F'$-cocones are in canonical bijection, thanks to initiality of $\emptyset$. We can therefore assume $F = F'$ without loss of generality, or equivalently $F(c) \neq \emptyset$ for all $c$. Then the non-connectedness assumption also means that the category $\cat{C}$ is not connected, and therefore can be partitioned into disjoint subcategories $\cat{C}_1$ and $\cat{C}_2$ without any morphisms between them, and inhabited by objects $c_1\in\cat{C}_1$ and $c_2\in\cat{C}_2$ with points $x_1\in F(c_1)$ and $x_2\in F(c_2)$. Considering the space $Y := \{1,2\}$ with $d(1,2) = r \in \R_+$ a parameter, there is a cocone from $F$ to $Y$ which takes all points in the spaces assigned to $\cat{C}_1$ to $1\in Y$, and all points in the spaces assigned to $\cat{C}_2$ to $2\in Y$. Hence if the colimit existed in $\cat{Met}$, then the distance of the images of $x_1$ and $x_2$ in the colimit space would have to be lower bounded by $d(1,2) = r$. This is absurd since $r$ was arbitrary.
\end{proof}

The characterization of sifted categories as inhabited categories with connected categories of cospans~\cite[Theorem~1.6]{sifted} now implies:

\begin{cor}
\label{sifted_exist}
Sifted colimits exist both in $\cat{Met}$ and in $\cat{CMet}$.
\end{cor}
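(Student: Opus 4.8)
The plan is to derive this directly from Lemma~\ref{colims_met_lem} together with the cospan characterization of sifted categories recalled just above. Let $\cat{C}$ be a small sifted category and $F:\cat{C}\to\cat{Met}$ (or $F:\cat{C}\to\cat{CMet}$) a diagram. By Lemma~\ref{colims_met_lem} it is enough to show that the full subcategory $\cat{C}'\subseteq\cat{C}$ spanned by the objects $c$ with $F(c)\not\cong\emptyset$ is connected; the degenerate case $\cat{C}' = \emptyset$, in which $F$ is constant at $\emptyset$, would be handled separately, its colimit being the initial object $\emptyset$.

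The one elementary fact that does all the work is that ``$F(c)\not\cong\emptyset$'' propagates along morphisms out of $c$: if $f:c_0\to c_1$ is a morphism of $\cat{C}$ with $F(c_0)\not\cong\emptyset$, then $F(f):F(c_0)\to F(c_1)$ has nonempty domain, so $F(c_1)\not\cong\emptyset$. Now take $c_0,c_1\in\cat{C}'$. Since $\cat{C}$ is sifted, \cite[Theorem~1.6]{sifted} tells us that it is inhabited and that the category of cospans between any two of its objects is connected, in particular nonempty; so there is a cospan $c_0\xrightarrow{f} d \xleftarrow{g} c_1$ in $\cat{C}$. By the propagation fact $F(d)\not\cong\emptyset$, i.e.~$d\in\cat{C}'$, and since $\cat{C}'$ is a full subcategory, $f$ and $g$ are morphisms of $\cat{C}'$ linking $c_0$ to $c_1$. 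Hence $\cat{C}'$ is connected (it is nonempty, containing $c_0$), and Lemma~\ref{colims_met_lem} produces the colimit of $F$, in $\cat{Met}$ or in $\cat{CMet}$ as appropriate.

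I do not anticipate a real obstacle; the only subtlety worth flagging is that connectedness of $\cat{C}$ itself would not suffice, since a connected diagram can have empty values lying downstream of nonempty ones and thereby disconnect $\cat{C}'$. What makes the argument go through is precisely the existence of cospans --- a strictly stronger consequence of siftedness --- combined with the \emph{one-sided} nature of the propagation of nonemptiness. No separate treatment of $\cat{CMet}$ is needed, since Lemma~\ref{colims_met_lem} covers both categories at once, and the cited characterization of sifted categories is a statement about abstract index categories.
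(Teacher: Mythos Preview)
Your proof is correct and follows exactly the approach the paper intends: the paper's own argument is the single sentence preceding the corollary, which invokes Lemma~\ref{colims_met_lem} together with the cospan characterization of sifted categories from~\cite[Theorem~1.6]{sifted}, and you have simply unpacked that implication carefully. One small slip in your closing commentary: empty values can lie \emph{upstream} of nonempty ones (since $\emptyset$ is initial), not downstream---your propagation fact shows precisely that they cannot lie downstream---but this does not affect the argument itself.
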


In addition, we obtain:

\begin{prop}
The inclusion functor $\cat{Met} \subseteq \cat{Met}_\infty$ (resp.~$\cat{CMet} \subseteq \cat{CMet}_\infty$) preserves all colimits which exist in $\cat{Met}$.
\end{prop}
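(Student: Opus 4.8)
The plan is to reduce the statement to the explicit construction of colimits in $\cat{Met}_\infty$ recalled above, together with Lemma~\ref{colims_met_lem}. Fix a small category $\cat{C}$ and a diagram $F : \cat{C} \to \cat{Met}$ admitting a colimit in $\cat{Met}$, and write $J : \cat{Met} \hookrightarrow \cat{Met}_\infty$ for the inclusion. To say that $J$ preserves $\colim F$ is to say that the image of the colimiting cocone is again colimiting in $\cat{Met}_\infty$; since $\cat{Met}_\infty$ is cocomplete, it suffices to show that the colimit of $J\circ F$ computed there is (isometric to) an object of $\cat{Met}$ together with its cocone, and that this object and cocone is exactly $\colim F$.

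First I would dispose of the objects on which $F$ is empty. Restricting $F$ to the full subcategory $\cat{C}' \subseteq \cat{C}$ on those $c$ with $F(c) \neq \emptyset$ changes neither the $\cat{Met}$-colimit nor the $\cat{Met}_\infty$-colimit, since $\emptyset$ is initial in both categories, so $F$-cocones and $F'$-cocones correspond canonically, exactly as in the proof of Lemma~\ref{colims_met_lem}. Hence I may assume $F(c) \neq \emptyset$ for all $c$, and then by Lemma~\ref{colims_met_lem} the existence of $\colim F$ forces $\cat{C}$ to be connected.

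Next I would invoke the explicit description. The colimit of $J\circ F$ in $\cat{Met}_\infty$ is the set $\colim (U\circ F)$ equipped with the shortest-path metric built from $\hat{d}$ of~\eqref{precolim}, after identifying points at distance zero. Connectedness of $\cat{C}$ guarantees that this shortest-path optimization yields only \emph{finite} values: any two points of $\colim (U\circ F)$ are joined by a zig-zag of cocone components along a path in $\cat{C}$, so $\hat{d}$ already bounds their distance by a finite sum. Thus $\colim (J\circ F)$ is isometric to an object of $\cat{Met}$. Since $\cat{Met} \subseteq \cat{Met}_\infty$ is full, every $\cat{Met}$-cocone on $F$ is in particular a $\cat{Met}_\infty$-cocone, the mediating morphism it determines out of $\colim (J\circ F)$ automatically lands in $\cat{Met}$ and is unique there; therefore $\colim (J\circ F)$, being an object of $\cat{Met}$, is also the colimit of $F$ in $\cat{Met}$. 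Consequently the comparison morphism $\colim (J\circ F) \to J(\colim F)$ is an isomorphism, proving the claim for $\cat{Met}$.

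For the complete case I would run the same argument one level up through the reflections. If $F : \cat{C} \to \cat{CMet}$ has a colimit in $\cat{CMet}$, then by Lemma~\ref{colims_met_lem} the subcategory $\cat{C}'$ is connected, so $F$ also has a colimit when regarded as a diagram in $\cat{Met}$ and in $\cat{Met}_\infty$; colimits in $\cat{CMet}$ (resp.\ $\cat{CMet}_\infty$) are obtained by applying the completion reflector to the colimit in $\cat{Met}$ (resp.\ $\cat{Met}_\infty$). By the $\cat{Met}$ case these underlying colimits coincide, and the completion of a metric space with all distances finite is the same whether formed in $\cat{Met}$ or in $\cat{Met}_\infty$, so the $\cat{CMet}_\infty$-colimit of $F$ is the inclusion of its $\cat{CMet}$-colimit. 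The only genuinely delicate point is the finiteness claim in the third paragraph — that connectedness really forces the shortest-path metric to be finite-valued — but that is precisely the computation already performed in the proof of Lemma~\ref{colims_met_lem}, so no new work is required.
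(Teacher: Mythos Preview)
Your proposal is correct and follows the same approach as the paper: reduce to the connected case via Lemma~\ref{colims_met_lem}, observe that the explicit $\cat{Met}_\infty$-colimit then has only finite distances, and conclude by fullness. The paper's own proof is a one-line reference back to the proof of Lemma~\ref{colims_met_lem}, where exactly this observation was already made; you have simply unpacked that reference in detail, including the parallel treatment of the complete case via the completion reflector.
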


\begin{proof}
As we saw in the proof of Lemma~\ref{colims_met_lem}, the colimits which exist in $\cat{Met}$ are all also colimits in $\cat{Met}_\infty$, and likewise in the complete case.
\end{proof}

\begin{prop}
For every $X\in\cat{Met}$ (resp.~$X\in\cat{CMet}$), the functor $X\otimes - : \cat{Met} \to \cat{Met}$ (resp.~$X\otimes - : \cat{CMet} \to \cat{CMet}$) preserves colimits.	
\label{tensor_cocont}
\end{prop}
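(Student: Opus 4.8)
\emph{Proof plan.} The plan is to reduce everything to the cocomplete ambient categories $\cat{Met}_\infty$ and $\cat{CMet}_\infty$, where the tensor product has a right adjoint for purely formal reasons, and then to descend back to $\cat{Met}$ and $\cat{CMet}$ using fullness and reflectivity.

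\emph{Step 1: the tensor is closed on $\cat{Met}_\infty$ and $\cat{CMet}_\infty$.} First I would observe that $(\cat{Met}_\infty,\otimes,\1)$ is a symmetric monoidal closed category, with internal hom $[Y,Z]$ the set of short maps $Y\to Z$ equipped with the supremum metric $d(f,g):=\sup_{y\in Y} d_Z(f(y),g(y))\in[0,\infty]$. Currying gives a natural bijection $\cat{Met}_\infty(X\otimes Y,Z)\cong\cat{Met}_\infty(X,[Y,Z])$; the verification is routine once one recalls that distances in the $\ell^1$ tensor add up in each factor, and the only reason to pass to $\cat{Met}_\infty$ here is that this supremum metric may genuinely be infinite. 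Moreover, if $Z$ is complete then so is $[Y,Z]$, since a Cauchy sequence in the supremum metric converges uniformly and a uniform limit of short maps is short; hence the same formula exhibits $(\cat{CMet}_\infty,\otimes,\1)$ as symmetric monoidal closed. Consequently, on both $\cat{Met}_\infty$ and $\cat{CMet}_\infty$ the functor $X\otimes-$ is a left adjoint (to $[X,-]$) and therefore preserves all colimits.

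\emph{Step 2: descent to the subcategories.} Let $F:\cat{C}\to\cat{Met}$ be a small diagram admitting a colimit $L$ in $\cat{Met}$, with cocone $\ell_c:F(c)\to L$. By the preceding proposition the inclusion $\cat{Met}\subseteq\cat{Met}_\infty$ preserves this colimit, so $L$ is also $\colim F$ in $\cat{Met}_\infty$; by Step 1, $X\otimes L$ together with the cocone $(\id_X\otimes\ell_c)_c$ is $\colim(X\otimes F(-))$ in $\cat{Met}_\infty$. Since $X$ and $L$ have finite distances, so does $X\otimes L$, i.e.\ $X\otimes L\in\cat{Met}$; as $\cat{Met}\subseteq\cat{Met}_\infty$ is full, a colimit in $\cat{Met}_\infty$ whose value lies in $\cat{Met}$ is a colimit in $\cat{Met}$, which settles that case. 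For $\cat{CMet}$, assume $X\in\cat{CMet}$ and let $F:\cat{C}\to\cat{CMet}$ have colimit $L$ in $\cat{CMet}$. Because $\cat{CMet}_\infty\subseteq\cat{Met}_\infty$ is reflective with the completion as reflector, $\colim_{\cat{CMet}_\infty}F$ is the completion of $\colim_{\cat{Met}_\infty}F$; existence of $L$ forces the nonempty part of $F$ to be connected (Lemma~\ref{colims_met_lem}), so $\colim_{\cat{Met}_\infty}F$ has finite distances, its completion lies in $\cat{CMet}$, and hence $L=\colim_{\cat{CMet}_\infty}F$. Applying Step 1 inside $\cat{CMet}_\infty$ gives $X\otimes L=\colim_{\cat{CMet}_\infty}(X\otimes F(-))$, and since $X$ and $L$ are complete, $X\otimes L$ is complete (Cauchy sequences in the $\ell^1$ product converge coordinatewise), so by fullness of $\cat{CMet}\subseteq\cat{CMet}_\infty$ it is the colimit in $\cat{CMet}$ as well.

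\emph{Anticipated obstacle.} The argument is essentially formal, and the care will be entirely in the bookkeeping across the four categories $\cat{Met}$, $\cat{CMet}$, $\cat{Met}_\infty$, $\cat{CMet}_\infty$ --- in particular in checking that the colimits in play transfer faithfully between a category and its reflective $\infty$-overcategory. The one genuinely content-bearing point will be that $X\otimes L$ is again complete in the $\cat{CMet}$ case: this is exactly where completeness of $X$, the hypothesis distinguishing the two statements, gets used, since the $\ell^1$ product of a complete space with an incomplete one need not be complete. I would also dispatch the trivial edge case $X\cong\emptyset$ separately: then $X\otimes-$ is constant at the initial object $\emptyset$, and both the image cocone and the colimit of the image diagram are $\emptyset$, so preservation is immediate.
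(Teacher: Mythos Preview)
Your proof is correct and follows essentially the same approach as the paper: establish monoidal closedness of $\cat{Met}_\infty$ and $\cat{CMet}_\infty$ via the sup-metric internal hom, conclude that $X\otimes -$ is a left adjoint there, and then descend to $\cat{Met}$ and $\cat{CMet}$ using the preceding proposition and fullness. The paper's version is considerably more compressed (it packs your Step~2 into the single phrase ``including those of $\cat{Met}$''), but the ideas are identical.
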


\begin{proof}
We first treat the case of $\cat{Met}$. We note that $\cat{Met}_\infty$ is monoidal closed under the $\ell^1$-product, using the supremum distance for turning the hom-sets $\cat{Met}(X,Y)$ into metric spaces; this is essentially~\cite[Section~2]{lawvere}. Hence $X\otimes - : \cat{Met}_\infty \to \cat{Met}_\infty$ is a left adjoint for every $X\in\cat{Met}$, and as such it preserves colimits, including those of $\cat{Met}$.

The complete case is again analogous, noting that the hom-space $\cat{CMet}_\infty(X,Y) := \cat{Met}_\infty(X,Y)$ with the sup-distance is already complete (by the existence of pointwise limits). This makes $\cat{CMet}_\infty$ also closed, and the inclusion $\cat{CMet}_\infty \subseteq \cat{Met}_\infty$ preserves exponential objects.
\end{proof}

\section{Kan extensions of lax monoidal functors}
\label{kan}

In this section we give a criterion for when a Kan extension of lax monoidal functors is a Kan extension in the bicategory $\cat{MonCat}$ of monoidal categories, lax monoidal functors and monoidal natural transformations. This in particular comprises the Kan extensions used to define the monad structure of $P$ in Section~\ref{secmonad}.

Kan extensions in $\cat{MonCat}$, or more generally, Kan extensions preserving some given algebraic structure, are known in the literature as \emph{algebraic Kan extensions}~\cites{algkanweber,koudenburg}. 
There are some results in the literature on when a left Kan extension in $\cat{Cat}$ of lax or strong monoidal functors is again monoidal (see~\cite[Theorem~1]{mt} and~\cite[Proposition~4]{paterson}) in such a way that the Kan extension also holds in $\cat{MonCat}$. There are also general results on when a Kan extension on a 2-category or double category can be lifted to a Kan extension in the 2-category of pseudoalgebras of a 2-monad (see~\cite[Theorem~1.1b]{koudenburg}, \cite[Theorem~7.7]{hypervirtual} and~\cite[Theorem~2.4.4]{algkanweber}), which can be applied to the monoidal category 2-monad.
In~\cite{ours_kan}, we also derive a statement of this type, which is very close to an instance of Koudenburg's result~\cite[Theorem~7.7]{hypervirtual}. 
For a more detailed discussion of the literature, and for additional context on algebraic Kan extensions, we refer to the discussion in~\cite[Section~1.2]{ours_kan}. 

None of the results cited above applies verbatim to the situation of Section~\ref{secmonad}. Therefore, here we derive a result of this type, tailored to our needs.

For a monoidal category $\cat{C}$, we denote its unit $e:\cat{1}\to\cat{C}$ and multiplication $\otimes:\cat{C}\times\cat{C}\to\cat{C}$ without explicit reference to the category. For a lax monoidal functor $F$, we denote its unit by $\eta_F$ and its multiplication by $\mu_F$.

\begin{thm}\label{month}
\label{monoidalkanext}
Let the following hypotheses be satisfied:

\begin{itemize}
 \item In $\cat{MonCat}$, we have a diagram 
 \begin{equation}
  \begin{tikzcd}
   \cat{C} \ar[swap]{dr}{G} \ar{rr}{F} & \nat{d}{\lambda} & \cat{D} \\
   & \cat{C'} \ar[swap]{ur}{L}
  \end{tikzcd}
 \end{equation}
 \item $\lambda$ makes $L$ into the left Kan extension of $F$ along $G$ in $\cat{Cat}$. 
 \item $G:\cat{C}\to\cat{C'}$ is \emph{strong} monoidal and \emph{essentially surjective}.
 \item The natural transformation $\lambda(-)\otimes\lambda(-)$, by which we mean
 \begin{equation}\label{lambdaepi}
  \begin{tikzcd}
   \cat{C}\times\cat{C} \ar[""{name=FF,below}]{rr}{F\times F} \ar[swap]{dr}{G\times G} && \cat{D}\times\cat{D} \ar{r}{\otimes} & \cat{D} \\
   & \cat{C'}\times\cat{C'} \ar[swap]{ur}{L\times L} \ar[Rightarrow,from=FF,"\lambda\times\lambda"]
  \end{tikzcd}
 \end{equation}
 is an epimorphism in the functor category $\cat{Cat}(\cat{C}\times\cat{C},\cat{D})$.
\end{itemize}
Then $\lambda$ makes $L$ into the left Kan extension of $F$ along $G$ also in $\cat{MonCat}$. Moreover, the monoidal structure of $L$ is the only monoidal structure that can be put on $L$ such that $\lambda$ is monoidal.
\end{thm}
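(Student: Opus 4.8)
The plan is to verify directly that $(L,\lambda)$ — which by hypothesis is a left Kan extension of $F$ along $G$ in $\cat{Cat}$ — is also one in $\cat{MonCat}$. Since $L$ and $\lambda$ already come equipped with monoidal data, the whole issue is a restriction‑of‑bijection statement: for each lax monoidal $H:\cat{C'}\to\cat{D}$, the bijection $\beta\mapsto(\beta\ast G)\circ\lambda$ between natural transformations $L\Rightarrow H$ and $F\Rightarrow HG$ provided by the $\cat{Cat}$-Kan extension should restrict to a bijection between \emph{monoidal} natural transformations on the two sides. The forward direction is formal: $(\beta\ast G)\circ\lambda$ is monoidal whenever $\beta$ is, being obtained by whiskering the monoidal $\beta$ with the strong monoidal $G$ and composing with the monoidal $\lambda$. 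So the content is the converse: given a monoidal $\alpha:F\Rightarrow HG$, the unique $\beta:L\Rightarrow H$ with $(\beta\ast G)\circ\lambda=\alpha$ is automatically monoidal. Once this is established, uniqueness of $\beta$ as a \emph{monoidal} transformation is automatic, since a competing monoidal $\beta'$ is in particular an ordinary natural transformation with the same factorization property.

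To see that $\beta$ respects multiplications, write $\gamma_{A,B}:G(A)\otimes G(B)\xrightarrow{\ \sim\ }G(A\otimes B)$ for the (invertible) structure maps of the strong monoidal $G$, and compare the two natural transformations $\otimes_{\cat{D}}\circ(L\times L)\Rightarrow H\circ\otimes_{\cat{C'}}$ with components $\beta_{X\otimes Y}\circ(\mu_L)_{X,Y}$ and $(\mu_H)_{X,Y}\circ(\beta_X\otimes\beta_Y)$. Essential surjectivity of $G$, hence of $G\times G$, together with naturality reduces the comparison to objects of the form $X=G(A)$, $Y=G(B)$ with $A,B\in\cat{C}$; there both sides are natural in $(A,B)$ with domain $\otimes_{\cat{D}}\circ(LG\times LG)$, onto which $\lambda(-)\otimes\lambda(-)$ is an epimorphism by the last hypothesis, so it suffices to check the equality after precomposing with $\lambda_A\otimes\lambda_B$. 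After this precomposition the $(\mu_H)\circ(\beta\otimes\beta)$ side becomes $(\mu_H)_{G(A),G(B)}\circ(\alpha_A\otimes\alpha_B)$, using $\beta_{G(A)}\circ\lambda_A=\alpha_A$; the $\beta\circ\mu_L$ side becomes the same expression by rewriting $(\mu_L)_{G(A),G(B)}\circ(\lambda_A\otimes\lambda_B)=L(\gamma_{A,B})^{-1}\circ\lambda_{A\otimes B}\circ(\mu_F)_{A,B}$ (monoidality of $\lambda$), then using naturality of $\beta$ at $\gamma_{A,B}^{-1}$, the relation $\beta_{G(A\otimes B)}\circ\lambda_{A\otimes B}=\alpha_{A\otimes B}$, and finally monoidality of $\alpha$. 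Compatibility with units is the shorter analogue: with $\gamma_0:e\xrightarrow{\ \sim\ }G(e)$ the unit structure map of $G$, apply the invertible $H(\gamma_0)$ to $\beta_e\circ\eta_L$ and use naturality of $\beta$ at $\gamma_0$, the unit axiom for $\lambda$, the relation $\beta_{G(e)}\circ\lambda_e=\alpha_e$, and the unit axiom for $\alpha$, to obtain $H(\gamma_0)\circ\eta_H$; cancelling $H(\gamma_0)$ yields $\beta_e\circ\eta_L=\eta_H$. No epimorphism or surjectivity hypothesis is needed here, as the unit is a single component.

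For the final uniqueness statement, suppose $(\mu_L,\eta_L)$ and $(\mu_L',\eta_L')$ are two lax monoidal structures on the functor $L$ each making $\lambda$ monoidal. Apply the part just proved with $(L,\mu_L,\eta_L)$ playing the role of the Kan extension, $H:=(L,\mu_L',\eta_L')$, and $\alpha:=\lambda$ (which is monoidal into $HG$ by assumption). The one‑categorical comparison is then $\id_L$, which is therefore forced to be a \emph{monoidal} natural transformation $(L,\mu_L,\eta_L)\Rightarrow(L,\mu_L',\eta_L')$; reading off what monoidality of the identity means gives $\mu_L=\mu_L'$ and $\eta_L=\eta_L'$. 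This argument is legitimate because the epimorphism hypothesis, and likewise the other hypotheses, refer only to $\lambda$, $F$, $L$ and $G$, not to the chosen monoidal structure on $L$, so they remain available throughout.

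I expect the multiplicativity chase of the second step to be the only genuine obstacle. Individually each manipulation is an instance of naturality or of a monoidal‑coherence square, but one has to juggle $\mu_F$, $\mu_L$, $\mu_H$, the isomorphisms $\gamma_{A,B}$, and the monoidality squares of both $\lambda$ and $\alpha$ simultaneously, and in particular keep careful track of the direction of each structure isomorphism so that, after precomposing with the epimorphism $\lambda(-)\otimes\lambda(-)$, the two composites visibly collapse to the common value $(\mu_H)_{G(A),G(B)}\circ(\alpha_A\otimes\alpha_B)$. The unit chase, the uniqueness of $\beta$, and the uniqueness of the monoidal structure on $L$ are then essentially formal consequences.
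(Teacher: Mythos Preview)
Your proposal is correct and follows essentially the same route as the paper: cancel the invertible structure maps of the strong monoidal $G$, then cancel the epimorphism $\lambda(-)\otimes\lambda(-)$, and reduce the monoidality of $u$ (your $\beta$) to the monoidality of $\lambda$ and $\chi$ (your $\alpha$). The paper presents this via pasting diagrams rather than component-level chases, but the logic is identical; your argument for uniqueness of the monoidal structure on $L$---applying the main statement to $H=(L,\mu_L',\eta_L')$ so that $\id_L$ is forced to be monoidal---is a slightly slicker variant of the paper's direct cancellation argument.
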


In comparison to previous results, this is closest to~\cite[Theorem~1.1b]{koudenburg}. In fact, Koudenburg's theorem could alternatively be used for the proof of Theorem~\ref{Paskanext}, but not for the proof of Theorem~\ref{FinUniftoNmonad}, for which we really need Theorem~\ref{monoidalkanext}.

\begin{proof}
Given a lax monoidal functor $X:\cat{C'}\to\cat{D}$ and a monoidal transformation $\chi:F\Rightarrow X\circ G$, we can apply the Kan extension property in $\cat{Cat}$, so that there exists a unique $u:L\Rightarrow X$ such that
\begin{equation}\label{uniu}
  \begin{tikzcd}
   \cat{C} \ar[swap]{ddr}{G} \ar{rr}{F} 
    \ar[bend right=10,""{name=F},phantom]{rr} \ar[bend right=90,""{name=GL,pos=0.48},phantom]{rr} 
     \ar[Rightarrow,bend right=20,from=F,to=GL,"\lambda",swap]
    && \cat{D} \\
   \\
   & \cat{C'} \ar[bend left,""{name=L,below}]{uur}{L} \ar[swap,bend right,""{name=X,above}]{uur}{X}
    \ar[Rightarrow,from=L,to=X,"u"]
  \end{tikzcd}
  \quad\equiv\quad
  \begin{tikzcd}
   \cat{C} \ar[swap]{ddr}{G} \ar{rr}{F} 
    \ar[bend right=20,""{name=F},phantom]{rr} \ar[bend right=90,""{name=GX,pos=0.5},phantom]{rr} 
     \ar[Rightarrow,bend left=10,from=F,to=GX,"\chi"]
    && \cat{D} \\
   \\
   & \cat{C'}  \ar[swap,bend right]{uur}{X}
  \end{tikzcd}
 \end{equation}
 What we need to show is that this $u$ is automatically monoidal. We first prove that it respects the units,
\begin{equation}\label{unitu}
 \begin{tikzcd}
  & \cat{C'} \ar[bend left=50,""{name=X,left}]{dd}{X} \ar[bend right=20,""{name=L,pos=0.51,right}]{dd}[pos=0.8]{L}
   \ar[Rightarrow,from=L,to=X,"u"] \\
  \cat{1} \ar{ur}{e} \ar[swap]{dr}{e}
   \ar[bend left=15,""{name=ED},phantom]{dr} \ar[bend left=90,""{name=ECPL,pos=0.38},phantom]{dr} 
    \ar[Rightarrow,bend left=10,from=ED,to=ECPL,"\eta_L"] \\
  & \cat{D} 
 \end{tikzcd}
 \quad \equiv \quad
 \begin{tikzcd}
  & \cat{C'} \ar[bend left=50,""{name=ECPX,below,pos=0.}]{dd}{X} \\
  \cat{1} \ar{ur}{e} \ar[swap]{dr}{e}
   \ar[bend left=15,""{name=ED},phantom]{dr} 
    \ar[Rightarrow,bend right=20,from=ED,to=ECPX,"\eta_X",swap] \\
  & \cat{D} 
 \end{tikzcd}
\end{equation}
To obtain this, we use that $\lambda$ respects units, which means
\begin{equation}\label{unitlambda}
  \begin{tikzcd}
   & \cat{C} \ar[bend left]{dr}{G} \\
   \cat{1} \ar{ur}{e} \ar[near start]{rr}{e} \ar[swap]{dr}{e} 
    \ar[bend left=10,""{name=ECP},phantom]{rr} \ar[bend left=60,""{name=ECG},phantom]{rr}
     \ar[Rightarrow,bend right=20,from=ECP,to=ECG,"\eta_G",swap]
    \ar[bend left=10,""{name=ED},phantom]{dr} \ar[bend left=90,""{name=ECPL,pos=0.6},phantom]{dr}
     \ar[Rightarrow,bend right=20,from=ED,to=ECPL,"\eta_L",swap]
    && \cat{C'} \ar[bend left]{dl}{L} \\
   & \cat{D} 
  \end{tikzcd}
  \quad \equiv \quad
  \begin{tikzcd}
   & \cat{C} \ar[bend left]{dr}{G} \ar[bend right=20]{dd}{F}
    \ar[bend left=10,""{name=F},phantom]{dd} \ar[bend left=90,""{name=GL},phantom]{dd}
     \ar[Rightarrow,from=F,to=GL,"\lambda"] \\
   \cat{1} \ar{ur}{e} \ar[""{name=un, above},swap]{dr}{e} 
    \ar[bend left=90,""{name=ECF,pos=0.4},phantom]{dr} \ar[bend left=10,""{name=ED},phantom]{dr}
     \ar[Rightarrow,bend left=20,from=ED,to=ECF,"\eta_F"]
    && \cat{C'} \ar[bend left]{dl}{L} \\
   &\cat{D} 
  \end{tikzcd}
 \end{equation}
and similarly for $\chi$. Since $\eta_G$ is an isomorphism,~\eqref{unitu} follows if we can prove it after postcomposing with $\eta_G$,
 \begin{align*}
  &\begin{tikzcd}[ampersand replacement=\&, column sep=large, row sep=large]
   \& \cat{C} \ar{dr}{G} \\
   \cat{1} \ar{rr}{e} \ar{ur}{e} \ar[swap,""{name=un,above}]{dr}{e}
    \ar[bend left=60,""{name=ECG},phantom]{rr} \ar[bend left=18,""{name=ECP,pos=0.51},phantom]{rr} 
     \ar[Rightarrow,from=ECP,to=ECG,"\eta_G"{left},"\cong"{right}]
    \ar[bend left=15,""{name=ED},phantom]{dr} \ar[bend left=90,""{name=ECPL,pos=0.68},phantom]{dr} 
     \ar[Rightarrow,bend left=10,from=ED,to=ECPL,"\eta_L",swap]
    \& \& \cat{C'} \ar[bend left,""{name=X,above}]{dl}{X} \ar[bend right,swap,""{name=L,below}]{dl}{L}
     \ar[Rightarrow,from=L,to=X,"u"] \\
   \& \cat{D}
  \end{tikzcd} 
  \quad \equiv \quad
  \begin{tikzcd}[ampersand replacement=\&, column sep=large, row sep=large]
   \& \cat{C} \ar{dr}{G} \ar[swap]{dd}{F} 
    \ar[bend left=10,""{name=F},phantom]{dd} \ar[bend left=90,""{name=GL,pos=0.48},phantom]{dd} 
     \ar[Rightarrow,bend left=20,from=F,to=GL,"\lambda"] \\
   \cat{1} \ar{ur}{e} \ar[swap,""{name=un,above}]{dr}{e} 
    \ar[bend left=15,""{name=ED},phantom]{dr} \ar[bend left=90,""{name=ECF,pos=0.42},phantom]{dr} 
     \ar[Rightarrow,from=ED,to=ECF,"\eta_F"]
    \& \& \cat{C'} \ar[bend left,""{name=X,above}]{dl}{X} \ar[bend right,swap,""{name=L,below}]{dl}[pos=0.6]{L}
     \ar[Rightarrow,from=L,to=X,"u"] \\
   \& \cat{D}
  \end{tikzcd} \\
  \equiv\quad &\begin{tikzcd}[ampersand replacement=\&, column sep=large, row sep=large]
   \& \cat{C} \ar{dr}{G} \ar[swap]{dd}{F} 
    \ar[bend left=20,""{name=F},phantom]{dd} \ar[bend left=90,""{name=GX,pos=0.5},phantom]{dd} 
     \ar[Rightarrow,bend right=10,from=F,to=GX,"\chi"] \\
   \cat{1} \ar{ur}{e} \ar[swap,""{name=un,above}]{dr}{e} 
    \ar[bend left=15,""{name=ED},phantom]{dr} \ar[bend left=90,""{name=ECF,pos=0.42},phantom]{dr} 
     \ar[Rightarrow,from=ED,to=ECF,"\eta_F"]
    \& \& \cat{C'} \ar[bend left,""{name=X,below}]{dl}{X} \\
   \& \cat{D}
  \end{tikzcd}
  \quad \equiv \quad
  \begin{tikzcd}[ampersand replacement=\&, column sep=large, row sep=large]
   \& \cat{C} \ar{dr}{G} \\
   \cat{1} \ar[""{name=ECPX,pos=0.9,below}]{rr}{e} \ar{ur}{e} \ar[swap,""{name=un,above}]{dr}{e} 
    \ar[bend left=60,""{name=ECG},phantom]{rr} \ar[bend left=18,""{name=ECP,pos=0.51},phantom]{rr} 
     \ar[Rightarrow,from=ECP,to=ECG,"\eta_G"{left},"\cong"{right}]
    \ar[bend left=15,""{name=ED},phantom]{dr} 
     \ar[Rightarrow,bend right=15,from=ED,to=ECPX,"\eta_X",swap]
    \&  \& \cat{C'} \ar[bend left]{dl}{X} \\
   \& \cat{D}
  \end{tikzcd}
 \end{align*}
which proves the claim.
 
Proving compatibility with the multiplication
\begin{equation}\label{multu}
 \begin{tikzcd}
  \cat{C'}\times\cat{C'} \ar[bend right=50,swap,""{name=LL,right}]{dd}{L\times L} \ar[bend left=50,""{name=XX,left}]{dd}{X\times X} \ar{rr}{\otimes}
   \ar[bend right=30,""{name=XXT,pos=0.55},phantom]{ddrr} \ar[bend left=70,""{name=TX,pos=0.65},phantom]{ddrr} 
    \ar[Rightarrow, bend right=20, from=XXT,to=TX, "\mu_X",swap]
   && \cat{C'} \ar[bend left=50]{dd}{X} \\
  \\
  \cat{D}\times\cat{D} \ar[swap]{rr}{\otimes} && \cat{D}
  \ar[Rightarrow,from=LL,to=XX,"u\times u"]
 \end{tikzcd}
 \quad \equiv \quad
 \begin{tikzcd}
  \cat{C'}\times\cat{C'} \ar[bend right=50,swap]{dd}{L\times L} \ar{rr}{\otimes}
   \ar[bend right=60,""{name=LLT,pos=0.35},phantom]{ddrr} \ar[bend left=30,""{name=TL,pos=0.4},phantom]{ddrr} 
    \ar[Rightarrow, bend left=20, from=LLT,to=TL, "\mu_L"]
   && \cat{C'} \ar[bend right=50,swap,""{name=L,right}]{dd}{L} \ar[bend left=50,""{name=X,left}]{dd}{X} 
   \ar[Rightarrow,from=L,to=X,"u"] \\
  \\
  \cat{D}\times\cat{D} \ar[swap]{rr}{\otimes} && \cat{D}
 \end{tikzcd}
\end{equation}
 works similarly, but is a bit trickier. We use compatibility of $\lambda$ with the multiplication \begin{equation}\label{multlambda}
  \begin{tikzcd}[column sep=small]
    \cat{C}\times\cat{C} \ar[swap, bend right=50]{dd}{F\times F} 
    \ar[swap, bend right=30,""{name=TF},phantom]{dd} \ar[swap, bend left=55,""{name=TGTL},phantom]{dd}
     \ar[Rightarrow,from=TF,to=TGTL,"\lambda\times\lambda"]
    \ar{rr}{\otimes} \ar[bend left,swap]{dr}{G\times G} 
    \ar[bend left=30,""{name=AG,below,pos=0.65},phantom]{drrr} \ar[bend right=10,""{name=TGA},phantom]{drrr}
     \ar[Rightarrow,from=TGA,to=AG,bend right=20,"\mu_G"]
   && \cat{C} \ar[bend left]{dr}{G} \\
   &  \cat{C'}\times\cat{C'} \ar[bend left,swap]{dl}{L\times L} \ar{rr}{\otimes}
    \ar[bend left=50,""{name=AL},phantom]{dr} \ar[bend right=50,""{name=TLA},phantom]{dr}
     \ar[Rightarrow, bend right=20, from= TLA, to=AL,"\mu_L"]
    && \cat{C'} \ar[bend left]{dl}{L} \\
   \cat{D}\times\cat{D} \ar[swap]{rr}{\otimes} && \cat{D}   
  \end{tikzcd}
  \; \equiv \;
  \begin{tikzcd}
    \cat{C}\times\cat{C} \ar[swap, bend right=50]{dd}{F\times F} \ar{rr}{\otimes}
    \ar[bend right=75,""{name=TFA,pos=0.45},phantom]{ddrr} \ar[bend left=50,""{name=AF,pos=0.4},phantom]{ddrr}
     \ar[Rightarrow,from=TFA,to=AF,"\mu_F",bend left=30]
   && \cat{C} \ar[swap,bend right=50]{dd}{F} \ar[bend left]{dr}{G}
    \ar[bend right=30,""{name=F},phantom]{dd} \ar[bend left=90,""{name=GL},phantom]{dd}
     \ar[Rightarrow, from=F,to=GL,"\lambda"] \\
    &&& \cat{C'} \ar[bend left]{dl}{L} \\
   \cat{D}\times\cat{D} \ar[swap]{rr}{\otimes} && \cat{D}
  \end{tikzcd}
 \end{equation}
 and similarly for $\chi$, in order to compute
 \begin{align*}
  &\begin{tikzcd}[ampersand replacement=\&, row sep=large]
   \cat{C}\times\cat{C} \ar[bend left,swap]{dr}{G\times G} \ar[swap,bend right]{dd}{F\times F} \ar{rr}{\otimes} 
    \ar[bend right=5,""{name=GGT,pos=0.58},phantom]{drrr} \ar[bend left=25,""{name=TG,pos=0.7},phantom]{drrr} 
     \ar[Rightarrow, bend right=20,from=GGT,to=TG,"\mu_G","\cong"{swap}]
    \ar[bend right=10,""{name=FF,pos=0.45},phantom]{dd} \ar[bend left=60,""{name=GGLL,pos=0.42},phantom]{dd} 
     \ar[Rightarrow,bend left=20,from=FF,to=GGLL,"\lambda\times\lambda"]
    \&\& \cat{C} \ar[bend left]{dr}{G} \\
   \& \cat{C'}\times\cat{C'} \ar[swap,bend right,""{name=LL,right}]{dl}{L\times L} \ar[bend left, pos=0.15,""{name=XX,left,pos=0.5}]{dl}{X\times X}
    \ar[Rightarrow,from=LL,to=XX,"u\times u"]
   \ar[bend right=45,""{name=XXT,pos=0.45},phantom]{dr} \ar[bend left=90,""{name=TX,pos=0.75},phantom]{dr} 
    \ar[Rightarrow, bend right=15, from=XXT,to=TX, "\mu_X"]
   \ar{rr}{\otimes} \&\& \cat{C'} \ar[bend left,""{name=X,below}]{dl}{X} \\
   \cat{D}\times\cat{D}  \ar[swap]{rr}{\otimes} \&\& \cat{D}
  \end{tikzcd} \\[4pt]
  \equiv \quad
  &\begin{tikzcd}[ampersand replacement=\&, row sep=large]
   \cat{C}\times\cat{C} \ar[bend left,swap]{dr}{G\times G} \ar[swap,bend right]{dd}{F\times F} \ar{rr}{\otimes}   
    \ar[bend right=5,""{name=GGT,pos=0.58},phantom]{drrr} \ar[bend left=25,""{name=TG,pos=0.7},phantom]{drrr} 
     \ar[Rightarrow, bend right=20,from=GGT,to=TG,"\mu_G","\cong"{swap}]
    \ar[bend right=10,""{name=FF},phantom]{dd} \ar[bend left=65,""{name=GGXX,pos=0.5},phantom]{dd} 
     \ar[Rightarrow,bend right=10,from=FF,to=GGXX,"\chi\times\chi"]
    \&\& \cat{C} \ar[bend left]{dr}{G} \\
   \&  \cat{C'}\times\cat{C'} \ar[bend left, pos=0.15]{dl}{X\times X} \ar{rr}{\otimes} 
    \ar[bend right=45,""{name=XXT,pos=0.45},phantom]{dr} \ar[bend left=90,""{name=TX,pos=0.75},phantom]{dr} 
     \ar[Rightarrow, bend right=15, from=XXT,to=TX, "\mu_X"]
    \&\& \cat{C'} \ar[bend left,""{name=X,below}]{dl}{X}  \\
   \cat{D}\times\cat{D}  \ar[swap]{rr}{\otimes} \&\& \cat{D}
  \end{tikzcd}\\[4pt]
   \equiv\quad 
   &\begin{tikzcd}[ampersand replacement=\&, row sep=large]
   \cat{C}\times\cat{C} \ar[swap,bend right]{dd}{F\times F} \ar{rr}{\otimes} 
    \ar[bend right=60,""{name=FFT,pos=0.4},phantom]{ddrr} \ar[bend left=40,""{name=TF,pos=0.48},phantom]{ddrr} 
     \ar[Rightarrow,bend left=20,from=FFT,to=TF,"\mu_F",swap] 
    \&\& \cat{C} \ar[bend right,""{name=F,above},swap]{dd}{F} \ar[bend left]{dr}{G} 
    \ar[bend right=10,""{name=F},phantom]{dd} \ar[bend left=65,""{name=GX,pos=0.5},phantom]{dd} 
     \ar[Rightarrow,bend right=10,from=F,to=GX,"\chi"] \\
   \& \phantom{ \cat{C'}\times\cat{C'} } \&\&  \cat{C'} \ar[bend left,""{name=X,below}]{dl}{X} \\
   \cat{D}\times\cat{D}  \ar[swap]{rr}{\otimes} \&\& \cat{D}
  \end{tikzcd} \\[4pt]
  \equiv \quad
  &\begin{tikzcd}[ampersand replacement=\&, row sep=large]
   \cat{C}\times\cat{C} \ar[swap,bend right]{dd}{F\times F} \ar{rr}{\otimes} 
    \ar[bend right=60,""{name=FFT,pos=0.4},phantom]{ddrr} \ar[bend left=40,""{name=TF,pos=0.48},phantom]{ddrr} 
     \ar[Rightarrow,bend left=20,from=FFT,to=TF,"\mu_F",swap] 
    \&\& \cat{C} \ar[bend right,swap]{dd}{F} \ar[bend left]{dr}{G}
    \ar[bend right=10,""{name=F},phantom]{dd} \ar[bend left=60,""{name=GL,pos=0.45},phantom]{dd} 
     \ar[Rightarrow,bend left=20,from=F,to=GL,"\lambda"] \\
   \& \phantom{ \cat{C'}\times\cat{C'} } \&\& \cat{C'} \ar[bend left,""{name=X,above}]{dl}{X} \ar[bend right,swap,""{name=L,below}]{dl}[pos=0.6]{L}
    \ar[Rightarrow,from=L,to=X,"u"] \\
   \cat{D}\times\cat{D}  \ar[swap]{rr}{\otimes} \&\& \cat{D}
  \end{tikzcd}\\[4pt]
  \equiv\quad 
  &\begin{tikzcd}[ampersand replacement=\&, row sep=large]
   \cat{C}\times\cat{C} \ar[bend left,swap]{dr}{G\times G} \ar[swap,bend right]{dd}{F\times F} \ar{rr}{\otimes}
    \ar[bend right=5,""{name=GGT,pos=0.58},phantom]{drrr} \ar[bend left=25,""{name=TG,pos=0.7},phantom]{drrr} 
     \ar[Rightarrow, bend right=20,from=GGT,to=TG,"\mu_G","\cong"{swap}]
    \ar[bend right=10,""{name=FF,pos=0.45},phantom]{dd} \ar[bend left=60,""{name=GGLL,pos=0.42},phantom]{dd} 
     \ar[Rightarrow,bend left=20,from=FF,to=GGLL,"\lambda\times\lambda"]
    \&\& \cat{C} \ar[bend left]{dr}{G} \\
   \& \cat{C'}\times\cat{C'} \ar[swap,bend right,""{name=LL,above}]{dl}{L\times L} \ar{rr}{\otimes} 
    \ar[bend right=90,""{name=LLT,pos=0.33},phantom]{dr} \ar[bend left=45,""{name=TL,pos=0.52},phantom]{dr} 
     \ar[Rightarrow, bend left=15, from=LLT,to=TL, "\mu_L",swap]
    \& \& \cat{C'} \ar[bend left,""{name=X,above}]{dl}{X} \ar[bend right,swap,""{name=L,below}]{dl}[pos=0.8]{L} 
    \ar[Rightarrow,from=L,to=X,"u"] \\
   \cat{D}\times\cat{D}  \ar[swap]{rr}{\otimes} \&\& \cat{D}
  \end{tikzcd}
 \end{align*}
 The natural transformation \eqref{lambdaepi} is epic, so that $\lambda\times\lambda$, whiskered by $\cat{D}\times\cat{D}\to\cat{D}$, can be canceled. $\mu_G$ is an isomorphism, so that it can be canceled as well. Finally $G\times G$ is essentially surjective, and therefore pre-whiskering by it can also be canceled. We are then left with \eqref{multu}.
 
Now suppose that $\eta'_L$ and $\mu'_L$ give another monoidal structure on $L$. For $\lambda$ to be monoidal, the equations \eqref{unitlambda} and \eqref{multlambda} need to be satisfied. But now by~\eqref{unitlambda} and the invertibility of $\eta_G$, we get $\eta'_L = \eta_L$. Similarly, by~\eqref{multlambda} and the fact that $\mu_G$ is an isomorphism, $\lambda\otimes\lambda$ is epic, and $G\times G$ is essentially surjective, we conclude that $\mu'_L=\mu_L$.
\end{proof}

It may help to visualize these equations three-dimensionally, by interpreting every rewriting step as a globular 3-cell, and whiskering and composing these 3-cells so as to form a 3-dimensional pasting diagram. This way,~\eqref{multlambda} becomes a full cylinder, with the two caps formed by $\lambda\times\lambda$ and $\lambda$, and with the three multiplications forming the side surface. The same is true for equation~\eqref{unitlambda}, but with the $\lambda\times\lambda$ cap collapsed to a single point, so that one obtains a cone with $\lambda$ on the base.

\section{Convex combinations of metric spaces}
\label{ccms}

In this section, we will establish that $\cat{Met}$ is a pseudoalgebra for the simplex operad, which means that there is a notion of ``convex combination'' of metric spaces which behaves very similarly to convex combinations in a vector space. The $n$-fold uniform convex combination of a metric space $X$ with itself is the power $X^n$ from Section~\ref{powerfunsec}. This motivates the definition of the latter, and in particular the rescaling of the metric by $\tfrac{1}{n}$, which may otherwise seem unnatural. Very informally, by Theorem~\ref{colimitcmet}, we can therefore think of the Wasserstein space $PX$ as a uniform infinitary convex combination of $X$ with itself.

\subsection{The simplex operad}
\label{operad}

Standard simplices, or equivalently spaces of probability measures on finite sets, or also equivalently convex combinations of arbitrary finite arity, naturally form an operad.

\begin{deph}[{e.g.~\cites{leinstercafe,bf}}]
 The \emph{simplex operad} $\Delta$, or \emph{convex combination operad} has a set of operations of arity $n$ given by the $(n-1)$-simplex\footnote{Usually $\Delta^n$ stands for the $n$-simplex of dimension $n$, but in this context it is less confusing to index the standard simplices by the number of vertices rather than by dimension.},
 \begin{equation}
  \Delta^n:=\left\{(\lambda_1,\dots,\lambda_n) \in\R^n \:\bigg|\: \lambda_i \ge 0, \sum_i\lambda_i=1\right\}
 \end{equation}
 The unit of the operad is the unique element $1\in\Delta^1$.  The composition is defined as
 \begin{align*}
  \Delta^n \times \Delta^{m_1}\times\ldots\times \Delta^{m_n} & \longrightarrow \Delta^{m_1 + \ldots + m_n} \\
  ( \nu, \lambda_1,\dots,\lambda_n) &\longmapsto  (\nu_1\lambda_{11},\dots,\nu_1\lambda_{1m_1},\dots,\nu_n\lambda_{n1},\dots,\nu_n\lambda_{nm_n}).
 \end{align*}
 The symmetry of the operad is given by permutations of the vertices of the simplex: for $\sigma\in S_n$,
 \begin{align*}
  S_n \times \Delta^n &\longrightarrow \Delta^n \\
  (\sigma,\lambda) &\longmapsto (\lambda_{\sigma(1)},\dots,\lambda_{\sigma(n)}).
 \end{align*}
\end{deph}

For example, every convex space (Definition~\ref{csdef}) is an algebra of $\Delta$ in $\cat{Set}$. It can be shown that $\Delta$ is the operad generated by binary operations $c_\lambda$ with $\lambda\in[0,1]$ subject to the relations encoded by the parametric symmetry and associativity of Definition~\ref{csdef}. In other words, the concept of $\Delta$-algebra in $\cat{Set}$ is a relaxation of the notion of convex space without the requirements~\ref{csdef}\ref{csunit}--\ref{csidempo}.

\subsection{Pseudometric spaces are a pseudoalgebra}
\label{pmet}

We can turn $\cat{Met}$ into a \emph{pseudoalgebra} (or \emph{weak algebra}) in $\cat{Cat}$ of the simplex operad. To do so, it is more convenient to work with a similar category $\cat{PMet}$, whose objects are \emph{pseudometric spaces}, i.e.~sets with a distance function satisfying $d(x,x) = 0$, symmetry, and the triangle inequality, but where $d(x,y) = 0$ may not imply $x = y$. As morphisms, we again choose short maps. In the following, we define a $\Delta$-pseudoalgebra structure on $\cat{PMet}$, where it is more explicit. Below, we will use this to turn $\cat{Met}$ into a $\Delta$-pseudoalgebra as well. Since all operations turn out to map the full subcategory $\cat{CMet}$ to itself, we also have a pseudoalgebra structure on $\cat{CMet}$.

In the following, in contrast to the main text, we use subscript notation $X_i$ to refer to members of a sequence or family of spaces.

\begin{deph}
For any $n$ and $\lambda\in\Delta^n$, the functor
\[
	k_\lambda:\cat{PMet}^n\to \cat{PMet}
\]
takes a tuple of pseudometric spaces $(X_1,\dots,X_n)$ to the pseudometric space $k_\lambda(X_1,\dots,X_n)\in \cat{PMet}$, where:
 \begin{itemize}
  \item The underlying set is the Cartesian product $X_1\times \dots \times X_n$;
  \item The pseudometric is given by the convex combination of the pseudometrics,
  \begin{equation}\label{algmetric}
   d\big( (x_1,\dots,x_n), (y_1,\dots,y_n) \big) := \lambda_1\,d(x_1,y_1) + \dots + \lambda_n\,d(x_n,y_n).
  \end{equation}
 \end{itemize}
\label{PMetpsuedoalg}
\end{deph}

This strongly suggests that $\cat{PMet}$ is indeed a pseudoalgebra of $\Delta$ in a canonical way. For the technical details, we use~\cite[Definition~4.1]{weber} as the definition of pseudoalgebra. First,~\eqref{algmetric} shows why it is important to work with pseudometric spaces, since this metric is guaranteed to be nondegenerate only when $\lambda$ has full support; thus setting up the same structure on $\cat{Met}$ directly would be more cumbersome, since then the underlying set would also have to vary with $\lambda$.

In order to construct the necessary coherence maps and prove the required equations, it is most convenient to use the fact that the Cartesian products make $\cat{Set}$ into a symmetric monoidal category, which is the same thing as a pseudoalgebra of the terminal operad $\cat{Comm}$~\cite[Example~4.5]{weber}. (This gives an unbiased definition of symmetric monoidal category, in the spirit of~\cite[Definition~3.1.1]{leinster}.) Thanks to the unique operad morphism $\Delta\to\cat{Comm}$, the category $\cat{Set}$ becomes a pseudoalgebra of $\Delta$ as well, where the action by $\lambda\in\Delta^n$ is given by $(X_1,\ldots,X_n)\mapsto X_1\times\ldots\times X_n$. This corresponds to the $k_\lambda$ from Definition~\ref{PMetpsuedoalg} under the forgetful functor $\cat{PMet}\to\cat{Set}$. Since the latter functor is faithful, there is at most one way to lift the coherence isomorphisms to $\cat{PMet}$, where the required equations then automatically hold. Proving that such liftings exist boils down to showing that the coherence isomorphisms are isometries, but this is straightforward to see from the definition~\eqref{algmetric}.

It remains to transport this pseudoalgebra structure to $\cat{Met}$. To this end, we first consider the category $\cat{PMet}_\sim$, which is the quotient of $\cat{PMet}$ where $f,g:X\to Y$ are considered equivalent if $d(f(x),g(x))=0$ for all $x\in X$. Considering a metric space as a pseudometric space gives a fully faithful functor $\cat{Met}\to\cat{PMet}_\sim$. By identifying all points that have distance zero, every pseudometric space has a metric quotient to which it is isomorphic in $\cat{PMet}_\sim$, using the axiom of choice to construct a section of the quotient map. Therefore the functor $\cat{PMet}_\sim\to\cat{Met}$ is an equivalence of categories, and it is enough to construct a $\Delta$-pseudoalgebra structure on $\cat{PMet}_\sim$. But this follows immediately from the fact that the functors $k_\lambda$ from Definition~\ref{PMetpsuedoalg} respect the equivalence relation on morphisms and therefore descend to functors $\cat{PMet}_\sim\to\cat{PMet}_\sim$.

In summary, we have turned the category $\cat{Met}$ into a pseudoalgebra of the simplex operad $\Delta$. The $n$-ary convex combination with weights $\lambda\in\Delta^n$ of metric spaces $X_1,\ldots,X_n$ is the metric space with underlying set $X_1\times\ldots\times X_n$, but where those $X_i$ with $\lambda_i = 0$ are omitted, and metric~\eqref{algmetric}. The power $X^n$ is the special case where $\lambda=(\tfrac{1}{n},\ldots,\tfrac{1}{n})$ is the uniform distribution, since then one gets from~\ref{algmetric},
\begin{equation}
   d\big( (x_1,\dots,x_n), (y_1,\dots,y_n) \big) = \dfrac{1}{n} \sum_{i=1}^n d(x_i,y_i) .
  \end{equation}
which is exactly the metric on the power $X^n$ defined by~\ref{pfmetric}.

Strangely, the category of finite probability spaces and measure-preserving maps, $\cat{FinProb}$, is also a pseudoalgebra of $\Delta$~\cite[Appendix~B]{bf}, but in a very different way, where a convex combination of spaces is based on the disjoint union of sets with the corresponding convex combination of probability measures, supported on these disjoint sets. At present we do not know whether to regard this as coincidental or deep.

\subsection{Internal algebras and the microcosm principle}
\label{microcosm}

The concept of monoid makes sense internally to any monoidal category. Similarly, commutative monoids can be considered internally to any symmetric monoidal category. More in general, there are situations where one expects an internal structure to be naturally defined within a categorified version of that same structure. This phenomenon has been called ``microcosm principle'' by Baez and Dolan \cite{hda3}, and they made this statement precise in terms of operads: if a category $\cat{C}$ is equipped with a pseudoalgebra structure for an operad $O$, then an internal algebra in $\cat{C}$ is specified by a lax morphism of pseudoalgebras $\cat{1}\to\cat{C}$. In the following, we will show that the internal $\Delta$-algebras in $\cat{CMet}$ are closely related to the $P$-algebras of Section~\ref{Palgssec}.

In our case, we have the simplex operad $\Delta$, and the pseudoalgebra structure on $\cat{Met}$, or equivalently on $\cat{PMet}_\sim$. The terminal category $\cat{1}$ is also trivially an algebra for $\Delta$. A lax morphism between these pseudoalgebras is then a functor $A:\cat{1}\to\cat{PMet}_\sim$, selecting an object $A\in\cat{PMet}_\sim$, and for each $n$ and $\lambda\in\Delta^n$ a natural transformation
\begin{equation}
 \begin{tikzcd}
  \cat{1}^n \ar[swap]{d}{!} \ar{r}{A^{\times n}} 
   \ar[bend left,""{name=ANK},phantom]{dr} \ar[bend right,""{name=A},phantom]{dr} 
    \ar[Rightarrow,from=ANK,to=A,"c_\lambda"]
   & \cat{PMet}_\sim^n \ar{d}{k_\lambda}  \\
  \cat{1} \ar[swap]{r}{A} & \cat{PMet}_\sim
 \end{tikzcd}
\end{equation}
or equivalently a morphism $c_\lambda : k_\lambda(A,\ldots,A)\to A$, satisfying certain coherence conditions which we specify below. Before doing so, it is instructive to look at what it means for $c_\lambda$ to be short: the metric on $k_\lambda(A,\ldots,A)$ is given by
\begin{equation}
 d \big( (x_1,\dots, x_n) , (y_1,\dots,y_n) \big) = \sum_i \lambda_i \, d(x_i,y_i).
\end{equation}
Thus shortness of $c_\lambda$ means that
\begin{equation}
 d \big( c_\lambda(x_1,\dots, x_n) , c_\lambda(y_1,\dots,y_n) \big) \le \sum_i \lambda_i \, d(x_i,y_i),
\end{equation}
which is the metric compatibility inequality from Section~\ref{Palgssec} in the generalized form~\eqref{metcompatgeneral}.

Following~\cite[Definition~4.10]{weber}, the $c_\lambda$'s have to satisfy the following conditions in order to define a lax morphism:
\begin{itemize}
 \item Unit condition. For $1\in\Delta^1$, the composite
 \begin{equation}
  \begin{tikzcd}[column sep=huge]
	\cat{1} \ar[bend right=70,swap]{r}{A} \ar["k_1(A)",near start]{r} \ar[bend left=70]{r}{A}  
	 \ar[bend left=60,swap,""{name=UP},phantom]{r} \ar[bend left=10,swap,""{name=UK,pos=0.635},phantom]{r} 
	 \ar[bend right=10,swap,""{name=DK,pos=0.62},phantom]{r} \ar[bend right=60,swap,""{name=DN},phantom]{r} 
	  \ar[Rightarrow,from=UP,to=UK,"\iota"] \ar[Rightarrow,from=DK,to=DN,"c_1"]
	 & \cat{PMet}_\sim 
  \end{tikzcd}
 \end{equation}
must be equal to the identity transformation of $A$, where $\iota$ is one of the coherences of the pseudoalgebra structure of $\cat{PMet}$. So explicitly, this means that the map $c_1 : k_1(A)\to A$ must be inverse to the canonical isomorphism $A\to k_1(A)$. This is the same as the unit condition for convex spaces when defined as algebras of the convex combinations monad.
 
 \item Composition condition. We consider arities $n$ with $\lambda\in\Delta_n$ and $m_1,\dots,m_n$ with $\mu_i\in\Delta_{m_i}$, and write $m:=\sum_i m_i$. The composite 2-cell
  \begin{equation}
   \begin{tikzcd}[row sep=large,column sep=large]
    & \cat{PMet}_\sim^m \ar{r}{\prod_i k_{\mu_i}} 
     & \cat{PMet}_\sim^n \ar{dd}{k_\lambda}  \\
    \cat{1}^m \ar{ur}{A^{\times m}} \ar{dr} \ar{r} 
     \ar[bend left=14, ""{name=AMPK,pos=0.51},phantom]{urr} \ar[bend right=10, ""{name=AN,pos=0.46},phantom]{urr} 
      \ar[Rightarrow, from=AMPK,to=AN,"\prod_i c_{\mu_i}"]
     \ar[bend left=50,""{name=ONEN,pos=0.62},phantom]{dr} \ar[bend left=10,""{name=UNIQ,pos=0.55},phantom]{dr}
      \ar[Rightarrow, from=ONEN,to=UNIQ]
     & \cat{1}^n \ar[swap]{ur}{A^{\times n}}  \ar{d} 
      \ar[bend left=60,""{name=ANK,pos=0.45},phantom]{dr} \ar[bend right=20,""{name=A,pos=0.4},phantom]{dr} 
       \ar[Rightarrow, from=ANK,to=A,"c_\lambda"] \\
    & \cat{1} \ar{r}{A} & \cat{PMet}_\sim 
   \end{tikzcd}
  \end{equation}
has to be equal to the composite
  \begin{equation}
   \begin{tikzcd}[row sep=large,column sep=large]
    & \cat{PMet}^m_\sim \ar{r}{\prod_i k_{\mu_i}} \ar[swap]{ddr}{k_{\lambda\circ\mu}} 
     \ar[bend left=60,""{name=PKK,pos=0.4},phantom]{ddr} \ar[bend left=10,""{name=K,pos=0.45},phantom]{ddr}
      \ar[Rightarrow, from=PKK, to=K]
     & \cat{PMet}^n_\sim \ar{dd}{k_\lambda}  \\
    \cat{1}^m \ar{ur}{A^{\times m}} \ar{dr} 
     \ar[bend left=48,""{name=AMK,pos=0.386},phantom]{drr} \ar[""{name=A},phantom]{drr} 
      \ar[Rightarrow, from=AMK,to=A, "c_{\lambda\circ\mu}",swap] \\
    & \cat{1} \ar{r}{A} & \cat{PMet}_\sim
   \end{tikzcd}
  \end{equation}
  where all unlabelled morphisms are the obvious ones. This amounts to commutativity of
  \begin{equation}
   \begin{tikzcd}
    k_\lambda(k_{\mu_1}(A,\ldots,A),\ldots,k_{\mu_n}(A,\ldots,A)) \ar{d} \ar{rr}{k_\lambda(c_{\mu_1},\ldots,c_{\mu_n})} && k_\lambda(A,\ldots,A) \ar{d}{c_\lambda} \\
    k_{\lambda\circ\mu}(A,\ldots,A) \ar{rr}{c_{\lambda\circ\mu}} && A
   \end{tikzcd}
  \end{equation}
  
 \item Equivariance condition. Since all symmetries of $\cat{1}$ are trivial, this is basically only an invariance condition. Let $\sigma \in  S_n$. 
The composite 2-cell
 \begin{equation}
  \begin{tikzcd}[column sep=tiny]
   && \cat{PMet}_\sim^n  \ar{drr}{\sigma} \ar{ddr}[swap]{k_\lambda} 
    \ar[bend left=70,""{name=SK,pos=0.67},phantom]{ddr} \ar[bend left=20,""{name=K,pos=0.6},phantom]{ddr}
     \ar[Rightarrow, from=SK, to=K]\\
   \cat{1}^n  \ar{urr}{A^{\times n}} \ar[swap]{dr}{!} 
    \ar[bend left=50,""{name=ANK,pos=0.4},phantom]{drrr} \ar[bend right=8,""{name=A},phantom]{drrr} 
     \ar[Rightarrow,from=ANK,to=A,"c_\lambda",swap]
    && |[alias=Z]| && \cat{PMet}_\sim^n \ar{dl}{k_{\sigma(\lambda)}} \\
   & \cat{1} \ar[swap,""{name=D}]{rr}{A} && \cat{PMet}_\sim
  \end{tikzcd}
 \end{equation}
 has to be equal to the composite
 \begin{equation}
  \begin{tikzcd}[column sep=tiny]
   && \cat{PMet}_\sim^n  \ar{drr}{\sigma}  \\
   \cat{1}^n  \ar{urr}{A^{\times n}} \ar[swap]{dr}{!} \ar{rr}{\sigma} 
    \ar[bend left=33,""{name=ANS,pos=0.4},phantom]{rrrr} \ar[bend left=13,""{name=SAN,pos=0.41},phantom]{rrrr} 
     \ar[equal,from=ANS,to=SAN]
    \ar[bend left=90,""{name=S,pos=0.55},phantom]{dr} \ar[bend left=15,""{name=U},phantom]{dr}
     \ar[Rightarrow, from=S, to=U]
    && \cat{1}^n \ar{rr}{A^{\times n}} \ar{dl}  
     \ar[bend left=50,""{name=AN,pos=0.7},phantom]{dr} \ar[bend right=45,""{name=KA},phantom]{dr} 
      \ar[Rightarrow,from=AN,to=KA,"c_{\sigma(\lambda)}",swap]
     && \cat{PMet}_\sim^n \ar{dl}{k_{\sigma(\lambda)}} \\
   & \cat{1} \ar[swap]{rr}{A} && \cat{PMet}_\sim
  \end{tikzcd}
 \end{equation}
where all unlabelled morphisms are the obvious ones. This amounts to commutativity of the following equivariance diagram,
 \begin{equation}
  \begin{tikzcd}
   k_{\sigma(\lambda)}(A,\ldots,A) \ar{d} \ar{r}{c_{\sigma(\lambda)}} & A \idar{d} \\
   k_\lambda(A,\ldots,A) \ar{r}{c_\lambda} & A
  \end{tikzcd}
 \end{equation}
 where the left arrow corresponds to the coherence isomorphism associated to the pseudoalgebra structure of $\cat{PMet}_\sim$, i.e.~to permutation of the factors of the underlying cartesian product.
\end{itemize}

Taken together, these conditions state precisely that the underlying set of $A$ must be an algebra of the simplex operad, with the additional condition that the maps $c_\lambda:k_\lambda(A,\ldots,A)\to A$ must be short. The latter is exactly the metric compatibility inequality in the generalized form~\eqref{metcompatgeneral}. So by Theorem~\ref{Palgthm}, there is a fully faithful functor from $P$-algebras to internal $\Delta$-algebras in $\cat{PMet}_\sim$. However, this functor is not essentially surjective, since the unitality and idempotency conditions~\ref{csdef}\ref{csunit}--\ref{csidempo} cannot be expressed in operadic terms. For example, we can choose any fixed $y\in A$ and define $c_\lambda(x_1,\ldots,x_n):=y$ for all $\lambda\in\Delta^n$ of arity $n\geq 2$. This defines an internal $\Delta$-algebra in $\cat{PMet}_\sim$ which is not in the essential image of the forgetful functor from $\cat{CMet}^P$ if $|A|\geq 2$.

In order to improve on this and to obtain a complete characterization of $P$-algebras as internal algebras, it may therefore be necessary to consider $\cat{PMet}_\sim$ as a lax algebra of a suitable 2-monad. We have not done this yet.

\bibliography{catprob}

\end{document}